\documentclass[a4paper,11pt]{amsart} %

\usepackage{geometry}
\usepackage{amsmath, amsthm}
\usepackage{amssymb, mathtools}
\usepackage{extarrows}
\usepackage{mathabx}

\usepackage{relsize}
\usepackage[dvipsnames]{xcolor}

\usepackage{xr}
\usepackage{bigfoot}

\usepackage[shortlabels]{enumitem}

\usepackage{todonotes}

\usepackage{accents}

\usepackage{hyperref}

\usepackage{tikz-cd}

\usepackage{endnotes}

\usepackage{booktabs}
\usepackage{tabulary}

\usepackage{soul}
\usepackage{scalerel}

\usepackage[utf8]{inputenc}
\usepackage[greek,english]{babel}
\usepackage{lmodern}

\newtheorem{THEOREM}{Theorem}[section]

\newtheorem{Conclusion}[THEOREM]{Conclusion}

\newtheorem{Theorem}[THEOREM]{Theorem}
\newenvironment{theorem}{\begin{Theorem}}{\end{Theorem}}

\newtheorem{Lemma}[THEOREM]{Lemma}
\newenvironment{lemma}{\vskip6pt\begin{Lemma}}{\end{Lemma}\vskip6pt}

\newtheorem{Proposition}[THEOREM]{Proposition}
\newenvironment{proposition}{\vskip12pt\begin{Proposition}}{\end{Proposition}\vskip6pt}

{\newtheoremstyle{definition}
  {12pt}
  {0pt}
  {\upshape\color{black}}
  {}
  {\bfseries\color{black}}
  {.}
  {.5em}
  {}
\theoremstyle{definition}

\newtheorem{notation}[THEOREM]{Notation}
\newtheorem{definition}[THEOREM]{Definition}

\newtheorem{remark}[THEOREM]{Remark}

\newtheorem{question}[THEOREM]{Question}
\newtheorem*{assumption}{Important Assumption}

\newtheorem*{note-nono}{Note}
\newtheorem{note-no}{Note}

}

\newtheorem{Claim}[THEOREM]{Claim}

\newtheorem{Subclaim}[THEOREM]{Subclaim}

\newtheorem{Corollary}[THEOREM]{Corollary}
\newenvironment{corollary}{\begin{Corollary}}{\end{Corollary}}

\newtheoremstyle{myrmkstyle}
  {12pt}
  {3pt}
  {\upshape\color{blue}}
  {}
  {\bfseries\color{blue}}
  {.}
  {.5em}
  {}

  \theoremstyle{myrmkstyle}

\newtheoremstyle{myrmkstyleblk}
  {12pt}
  {0pt}
  {\upshape\color{black}}
  {}
  {\bfseries\color{black}}
  {.}
  {.5em}
  {}

  \theoremstyle{myrmkstyleblk}

\newtheoremstyle{privatermkstyle}
  {12pt}
  {3pt}
  {\upshape\color{blue}}
  {}
  {\bfseries\color{blue}}
  {}
  {0em}
  {}

  \theoremstyle{privatermkstyle}

\newtheoremstyle{privatermkstylered}
  {3pt}
  {3pt}
  {\upshape\color{red}}
  {}
  {\bfseries\color{red}}
  {}
  {0em}
  {}

  \theoremstyle{privatermkstylered}

\newenvironment{eqn}{\begin{equation*}}{\end{equation*}}
\newenvironment{itmz}{\begin{itemize}[itemsep=6pt,topsep=-3pt,leftmargin=0.15in]}{\end{itemize}}

\def\>{\rangle}
\def\<{\langle}

\newcommand{\squ}{\scalebox{0.7}{$\square$}}

\newcommand{\squaresub}[1]{\square_{\ssss #1}{\hskip2pt}}

\newcommand{\rge}{\mathop{\mathrm{rge}}} 
\newcommand{\dom}{\mathop{\mathrm{dom}}} 
\newcommand{\Var}{\mathop{\mathrm{Var}}} 
\renewcommand{\lim}{\mathop{\mathrm{lim}}} 
\renewcommand{\ni}{\notin}
\renewcommand{\to}{\mathop{\parbox{.5cm}{\rightarrowfill}}}

\newcommand{\lh}{\mathop{\mathrm{lh}}}

\newcommand{\ceu}{\mathop{\mathrm{ceu}}}
\newcommand{\On}{\mathop{\mathrm{On}}}
\newcommand{\Card}{\mathop{\mathrm{Card}}}

\newcommand{\psh}[1]{\mathop{\mathrm{pSh}}({#1})}

\newcommand{\psho}{\mathop{\mathrm{pSh}}({\Omega})}

\newcommand{\pshol}{\mathop{\mathrm{pSh}}({\Omega},L)}
 
\newcommand{\concat}{\kern-.25pt\raise4pt\hbox{$\frown$}\kern-.25pt}
\newcommand{\on}{\upharpoonright}  

\newcommand{\Nbb}{\mathbb{N}}

\newcommand{\forces}{\Vdash}
\newcommand{\isom}{\cong}
\newcommand{\thinks}{\models}

\newcommand{\Vee}{\bigvee}
\newcommand{\Wedge}{\bigwedge}

\newcommand{\UA}{{\mathcal U}{\mathcal A}}
\def\sss{\hskip2pt}

\def\ssss{\hskip1pt}
\def\tupof#1#2{\<\ssss #1\sss:\sss #2\ssss\>} 
\def\tup#1{\<\ssss #1\ssss\>} 
\def\setof#1#2{\{\ssss #1\sss:\sss #2\ssss\}} 
\def\set#1{\{\ssss #1\ssss\}} 

\newcommand{\cardrule}{\hrule height.2pt}
\newcommand{\cardrulefill}{\cleaders\cardrule\hfill}
\newcommand{\cardchar}[1]{\vbox{\ialign{##\crcr
    \cardrulefill\crcr\noalign{\kern1pt\nointerlineskip}
    $\hfil\displaystyle{#1}\hfil$\crcr}}}
\newcommand{\barchar}[1]{\vbox{\ialign{##\crcr
    \cardrulefill\crcr\noalign{\kern1.5pt\nointerlineskip}
    $\hfil\displaystyle{#1}\hfil$\crcr}}}
\def\card#1{\cardchar{\cardchar{#1}}}

\newcommand{\forkindep}[2][]{%
  \mathrel{
    \mathop{
      \vcenter{
        \hbox{\oalign{\noalign{\kern-.3ex}\hfil$\vert$\hfil\cr
              \noalign{\kern-.7ex}
              $\smile$\cr\noalign{\kern-.3ex}}}
      }
    }\displaylimits^{#2}_{#1}
  }
}

\title{Some contributions to presheaf model theory, II - back and forth}

\author{Andreas Brunner, Charles Morgan and Darllan Concei\c{c}\~ao Pinto}

\address{Departamento de Matem\'atica, Universidade Federal de Bahia,
  Avenida Milton Santos s/n, S/N, Ondina, Salvador, Bahia, Brazil. CEP: 40170-110.}
\email{andreas@dcc.ufba.br}
\email{charlesmorgan@ufba.br}
\email{darllan@ufba.br}

\begin{document}

\begin{abstract} We discuss the back and forth technique in the context of presheaf model theory. The essence of the back and forth technique lies in showing the relationship
  between various hierarchies which calibrate similarity between two models and, more generally, between two pairs consisting of a model and a tuple from it.
  In this paper we define several such hierarchies for presheaf models (and tuples of sections from them): those based on the degree of extendibility of partial isomorphisms through literal back and forth conditions,
  on sharing specific, abstract invariants which we define (the $F^\alpha_{M,\bar{a}}$ of \S{}\ref{function_analysis} for example), on agreeing on the (truth) values
  of instantiations of formulae up to a given amount of quantifier completity, on the existence of winning strategies for player II in certain Ehrenfeucht-Fra\"iss\'e-type games
  and, finally, on satisfying certain infinitary sentences that arise in the construction of Scott sentences. We ultimately show that all of these hierarchies align. 
\end{abstract}

\maketitle

\section*{Introduction}\label{intro}

Back and forth arguments appear to
have first arisen in the literature in proofs given by Huntingdon (\cite{Huntingdon}) and Hausdorff (\cite{Hausdorff})
of Cantor's theorem that any two countable dense linear orders without endpoints are isomorphic. The technique was
elaborated and refined in the work of Fra\"iss\'e, Ehrenfeucht, Scott, Karp and Tiamanov, amongst others, in the 1950s and 1960s
(\cite{Fra50},\cite{Fra53},\cite{Fra56},\cite{Scott},\cite{Karp},\cite{Taimanov1},\cite{Taimanov2}). It
was given a central r\^ole in the presentation of the fundamentals of model theory by Poizat (\cite{Poizat-fr}, \cite{Poizat-en}).
It remains an active research theme to this day.\footnote{$\sss$See, for example, V\"a\"an\"anen's fascinating book (\cite{Vaananen}).
Harrison-Trainor's acute recent survey (\cite{Harrison-Trainor}) views the theme from a different, more complexity theoretic, slant.}

The essential idea of a back and forth argument is that one tries to extend, iteratively, a partial map $f$ between two structures $M$ and $N$ 
which shows similarity between the substructures generated by $\dom(f)$ and $\rge(f)$, by first selecting an element, say $x$, of $M\setminus \dom(f)$
and trying to `match' it with an element, say $y$, of $N\setminus \rge(f)$, and then selecting an element $z$ of $N\setminus \rge(f)\cup\set{y}$
and trying to match it with some $w$ in $M\setminus\dom(f)\cup\set{x}$, and then iterating the process.

Presheaf model theory has a history going back to 1970s, some of which we sketched in the introduction to
\cite{BMPI}. In this paper we continue working in what has crystalised, currently, as the most fruitful framework for presheaf model theory,
that of Fourman-Scott and Higgs (\cite{FS}, \cite{Higgs1}, \cite{Higgs2}). 
One fixes a complete Heyting algebra, $\Omega$, and, given a language $L$,
uses the elements of $\Omega$ as `truth' values, values for the interpretations of instantiations of $L$-formulae in $L$-structures,
rather than $\set{\bot,\top}$ as one does classically. In this setting one has, inter alia, versions of
the downward L\"owenheim-Skolem theorem (\cite{M88}), the omitting types theorem (\cite{BM04}), the generic model theorem (\cite{Berrio}),
Los’s theorem (\cite{M90}, \cite{Ara}), Robinson’s method of diagrams (\cite{BM14}, \cite{BMPI}), the completness theorem and a limited form of
the model existence theorem (\cite{B00}), as well as results in neostability theory (\cite{BMPI}).

In view of this progress on various aspects of presheaf model theory it is natural to wonder 
whether the back and forth method is applicable in presheaf model theory and if so how it might look.\footnote{$\sss$One should stress
it is not at all a given that all methods and results in classical model theory have successful analogues in
presheaf model theory (similarly, for example, to the situation for continuuous model theory).
On the one hand, one knows some things definitely do not work in good analogy to classical model theory -- for example the compactness theorem fails.
On the other, there are things in classical model theory for which it remains unknown whether there are good analogues in presheaf model theory  --
for example the Feferman-Vaught theorem.}
Inital efforts in this direction were made by Sette-Caicedo (\cite{SC}) and Brunner (\cite{B00}), who proved versions of
Fra\"iss\' e's theorem for sheaves of models over topological spaces and, generalizing \cite{SC},
for sheaves over arbitrary complete Heyting algebras, respectively. 

The proofs of Fra\"iss\' e style theorems both in \cite{EFT}, \S{}XII.3 or \cite{Marker}, for example,\S{}2.4, for Fra\"iss\' e's original theorem,
and in \cite{SC} and \cite{B00}, have interesting features in common. We discuss them here in order to give some motivation for our presentation below.

A high-level overview of the proofs is that they take the following form. Let $L$ be a first order relational language. 
If $M$, $N$ are $L$-structures one can define a hierarchy $\tupof{Q^n_{M,N}}{n\in \Nbb}$ of refinements  
of the set of partial isomorphisms between $M$ and $N$. The tiers of the hierarchy are governed by the amount of back and forth extensions
it is possible to make to the constituent partial isomporhisms. Then, for each $n\in \Nbb$, $M$ and $N$ satisfy the same sentences of
quantifier degree at most $n$ if and only if $Q^n_{M,N}$ is non-empty.\footnote{$\sss$We have hidden what a partial isomorphism is,
what we mean by quantifier degree and so on. The reader's intuitions will be more than sufficient at this level of generality.}

The first thing to remark is, although not stated in this somewhat abstract way in any account of which we are aware,
the proofs in effect inductively build for each $L$-structure $M$ a hierarchy of invariants. Let us call them $\tupof{Z^n_M}{n\in \Nbb}$.
Marker (\cite{Marker}) calls a specific instance of this process \emph{Scott-Karp analysis} and we adopt this name for the process generally.\footnote{$\sss$We only use
the notations $Q^n_{M,N}$ and $Z^n_M$ in this introduction. In the body of the paper the sets $Q_\alpha(p)$ generalize the r\^ole of the $Q^n_{MN}$ -- the $M$, $N$ being implicit.
The function defined in \S{}\ref{function_analysis} and the sentence defined in \S{}\ref{the_phi_alpha_Ma} are examples of the hierarchies of invariants.}

The proofs for any $n\in\Nbb$ then consist of a circle of implications. If for all $m\le n$ one has $Z^m_M=Z^m_N$ then $Q^n_{M,N}$ is non-empty. If 
$Q^n_{M,N}$ is non-empty then $M$ and $N$ satisfy the same sentences of
quantifier rank at most $n$. Finally, if $M$ and $N$ satisfy the same sentences of
quantifier rank at most $n$ then for all $m\le n$ one has $Z^m_M=Z^m_N$.

This is striking, because the invariant $Z^n_M$ for any model and for any particular $n\in\Nbb$ is a single, albeit perhaps complicated, object,
so the check whether for all $m\le n$ one has $Z^m_M=Z^m_N$ is a very discrete (indeed, here, finite) process,
whilst satisfying the same sentences up to a certain quantifier degree or
having $Q^n_{M,N}$ non-empty are in some sense global properties of $M$ and $N$ -- in order to see that they hold in both cases one has to check
some property holds of everything of the relevant sort (specifically, sentences and objects in the models).
Consequently, whilst working on this material (and \cite{BMPIV}) our point of view has evolved to the extent that we now regard the equivalence between
having for all $m\le n$ that $Z^m_M=Z^m_N$ and $Q^n_{M,N}$ being non-empty as the true heart of Fra\"iss\' e style theorems.

Secondly, rather than working purely with $L$-structures one has to work (and prove the equivalences) with pairs $(M,\bar{a})$ consisting of
an $L$-structure $M$ and a tuple $\bar{a}$ of elements from it. This is natural since, for example, as soon as one wishes to compare the satisfaction
in two models of sentences $Qx\phi(x)$ with quantifiers one has to use the inductive definition satisfaction and thus look at
instantiations of $\phi(x)$ in the models.


In this paper we give a more comprehensive treatment
of questions around the back and forth method for presheaf model theory.
We remark at the outset that we go beyond the milieu of \cite{B00} in a number of ways.
Firstly, we deal with arbitrary signatures, and do not restrict ourselves to relational languages. Secondly, throughout we deal with transfinite
notions of refinement, equivalence and so on, rather than restricting to finite ones. Thirdly, we organize our definitions, results, proofs and so on,
in such as way as to deal with first order logic and infinitary languages simultaneously.\footnote{$\sss$The results in \cite{B00}, Chapter 3, were stated for sheaves,
however the arguments there do not make use of glueing and the proofs go through for presheaves.}

We briefly discuss the contents of the paper. 
We start, \S{}\ref{part_isoms}, by discussing partial isorphisms -- essentially maps that preserve unnested atomic formulae, proving some of their basic properties
and then, \S{}\ref{Q_alpha_p} defining refinements of this notion under iterated satisfaction of back and forth conditions. We next 
introduce certain Ehrenfeucht-Fra\"iss\'e-style games for pairs of presheaf models (strictly speaking for pairs of pairs consisting of a presheaf model and a tuple from it)
and show the existences of winning strategies for player $II$ in these games is
equivalent to the existence of sufficiently refined partial isomorphisms, \S{}\ref{games}.

We then, \S{}\ref{function_analysis}, immediately turn to defining hierarchies of invariants for pairs consisting of a presheaf model and a tuple from it. The invariants here
are certain functions, see Definition (\ref{introducing_the_functions_tuples}). We then show
a `true heart of Karp theorem': two pairs consisting of a presheaf model and a tuple having common invariants up to any specific ordinal
is equivalent to there being partial isomorphisms refined to the same ordinal level between them.
We then define a Scott-style rank, prove it is well-defined and discuss limitations on its usefulness (as far as we can see) in comparison with the classical case.

Up to this point we have only needed the notion of an (unnested) atomic formula. However, as we wish to consider to what extent
two pairs consisting of a presheaf model and a tuple from it satisfy the same formulae in general it is now necessary to introduce languages
and the quantifier degree of formulae, \S{}\ref{languages}.
Since we work with languages which are not necessarily relational there may be terms in the languages which arise from the application
function symbols to other terms and these give rise to \emph{nested} formulae. The modified rank refines quantifier degree by taking into account
the complexity of terms involved in a formula.

It turns out for technical reasons we need to work in \S{}\ref{equiv_implies_equality} and \S{}\ref{Towards_Karp's_theorem}
with \emph{unnested} formulae. Consequently, we have to discuss, \S{}\ref{unnesting}, how to unnest formulae in such a way that the interpretations of the instantiations of the unnested formula
remain unchanged from those for the original formula. The process of defining an unnesting is common to classical and presheaf model theory, and results in
the quantifier degree of the unnesting being equal to the modified rank of the original formula.
In classical model theory proving the interpretations are unchanged is straightward, although it is touched on only rarely.
An exception is \cite{Hodges}, where a `proof by example' is given. (We give a more complete account in \cite{BMPIV}.)
However, for presheaf model theory the process of proving the interpretations of instantiations
do not change when passing from a formula to its unnesting is considerably more delicate.
Consequently, Theorem (\ref{unnesting_pres_value_atom_fml}) and Corollary (\ref{evaluation_persists_through_unnesting}),
which show the process does in fact work, are important results. 
  
We then, \S{}\ref{additional_connectives}, discuss how to augment our languages by adding a unary
connective $\squaresub{p}$ for each element $p$ of $\Omega$. We call
the resultant language $L^{\squ}$.  We introduce $L^{\squ}$ because we
need to work with structures and sentences over it in order to prove
the main result of \S{}\ref{equiv_implies_equality}, that if
interpretations of instantiations of formulae up to a certain ordinal
degree coincide for two pairs consisting of a presheaf model and a
tuple from it then the hierarchies of functions defined from each of
the pairs are equal up to that ordinal.\footnote{We include commentary
here on an alternative approach through adding nullary connectives in
the the style of continuous model theory.}

A methodological comment on this maneouvre is in order. In \cite{BM14} the unary connectives $\squaresub{p}$ were employed in
some results and proofs on Robinson's method of diagrams for presheaves.
In \cite{BMPI}, \S{}3, we were able to give sharpened versions of those results whilst avoiding
using the unary connectives.\footnote{At the end of \S{}\ref{additional_connectives} we
briefly discuss how one can present the results from \cite{BMPI} on the method of diagrams in aesthetically different,
  to some readers more pleasing, ways using $L^{\squ}$.}
Here we were not able to avoid using them and believe it is not
possible to do so.\footnote{$\sss$See Question (\ref{question_can_we_avoid_the_squaresubs}).}
We can, and do, define hierarchies of invariants in \S{}\ref{function_analysis} without using the
connectives.\footnote{$\sss$The Fra\"iss\'e-style theorems in both \cite{SC} and \cite{B00} are also in the context of $L^{\squ}$.
One difference from the approach in this paper is that both require the unary connectives to define invariants.}
However, our proofs of the main results of \S{}\ref{equiv_implies_equality} seem to necessitate them.
This should not necessarily be so surprising. For any ($L^{\squ}$-)formula $\phi$, presheaf model $M$ and tuple $\bar{a}$ from $M$,
the interpretation of $\squaresub{\bot}\phi(a)$ is the same as the interpretation of $\neg\phi(\bar{a})$
(and the interpretation of $\squaresub{\top}\phi(a)$ is simply that of $\phi(\bar{a})$).
Thus, for classical model theory, \emph{i.e.}, when $\Omega = \set{\bot,\top}$, these connectives are already in the language.
Moreover, the use of the $\squaresub{p}$ in our argument in \S{}\ref{equiv_implies_equality} mirrors
one version of the use of  $\neg$ in classical back and forth arguments (see \cite{BMPIV}, \S{}4).

We also note that for \S{}\ref{equiv_implies_equality}, \S{}\ref{portmanteau} and \S{}\ref{the_phi_alpha_Ma} we restrict attention
to signatures for which all constant symbols have extent $\top$. We discuss why this is necessary at the start of
\S{}\ref{equiv_implies_equality}.

In \S{}\ref{Towards_Karp's_theorem} we show sufficiently refined partial isomorphisms preserve instantiations of all formulae
whose modified rank their degree of refinement exceeds,

The results of \S{}\ref{equiv_implies_equality} and \S{}\ref{Towards_Karp's_theorem}
allow us to complete the proof of Proposition (\ref{portmanteau_prop}), showing the equivalence
between various notions of equivalence between two  pairs consisting of a presheaf model and a tuple from it discussed in the paper.
A corollary (Corollary (\ref{generalization_Karp_thm})
is a generalization of Karp's theorem for presheaves of models for first order structures and $L_{\infty\lambda}$-structures.

In the final section, \S{}\ref{the_phi_alpha_Ma}, we define a hierachy of invariants for (expansions of) $L^{\squ}$-structures each of which is a
formula in the expanded language.
This generalizes the approach of \cite{SC} and \cite{B00}.
We show that these invariants are closely connected to the invariants introduced in \S{}\ref{introducing_the_functions_tuples}.
This allows us to add one further equivalence to the list given in Proposition (\ref{portmanteau_prop}) and recover a proof similar in overall style to those
in \cite{EFT}, \S{}XII.3 or \cite{Marker}, \S{}2.4.

\section{Preliminaries}\label{prelims}

\begin{notation} Most of our notation is standard. We use a handful of standard set theoretic conventions. We write
  $\On$ for the class of ordinals and $\Card$ for the class of cardinals.
  Given a set $X$, an ordinal $\alpha$ and a cardinal $\mu$
we denote by $\card{X}$ the cardinality of $X$ and $^{\alpha}X$ the set of sequence of length $\alpha$ from $X$, equivalently
the set of functions from $\alpha$ to $X$, while $^{<\alpha}X = \bigcup_{\beta<\alpha} {^{\beta}X}$, and $[X]^{<\mu}$ is the set of
subsets of $X$ of cardinality less than $\mu$.

We follow the typical model theoretic convention and denote concatenation by juxtaposition.
If $\bar{a}$, $\bar{b}$ are tuples from a structure $M$ and $c\in M$,
  we write $\bar{a}\bar{b}$ for the concatenation $\bar{a}\concat\bar{b}$,
  and $\bar{a}c$ for the concatenation $\bar{a}\concat c$. More
  explicity, if $\bar{a}=\tupof{a_i}{i<\alpha}\in {^{\alpha}M}$,
  $\bar{b}=\tupof{b_j}{j<\beta}\in {^{\beta}M}$ and $c\in M$, write
  $\bar{a}\bar{b}$ for the sequence $\bar{d}=\tupof{d_i}{i<\alpha +
    \beta}$, where for $i<\alpha$ we have $d_i=a_i$ and for $i =
  \alpha +j$ with $j<\beta$ we have $d_i = b_j$, and write $\bar{a}c$
  for the sequence $\bar{e}=\tupof{e_i}{i<\alpha + 1}$ where for
  $i<\alpha$ we have $e_i=a_i$ and $e_\alpha = c$.
\end{notation}

Our notation for presheaves is taken from \cite{BMPI}. This paper also provides the necessary background material on
presheaf model theory and some historical discussion.

Fix for the remainder of the paper a complete Heyting algebra $\Omega$.

\begin{definition}A \emph{signature} is a collection of function, relation and constant symbols. More formally,
  $\tau$ is a signature if it is a quadruple $\tup{\mathcal F,\mathcal R,\mathcal C,\tau'}$, where
  $\mathcal F$, $\mathcal R$ and $\mathcal C$ are pairwise disjoint sets. 
  $\tau':\mathcal F\cup\mathcal R\longrightarrow \Nbb$.
\end{definition}

Fix for the remainder of the paper  a signature $\tau$, $=\tup{\mathcal F,\mathcal R,\mathcal C,\tau'}$.
Until we reach \S{}\ref{equiv_implies_equality} we also assume $\mathcal C$ is a presheaf.

Later in the paper we will construct various languages over $\tau$. The key parameters controlling the constructions
will be three cardinals bounding the number of free variables allowed in formulae, the size of conjunctions and disjuntions
allowed and the size of sets over which quantification is allowed. However, initially all that we need are the notions
of a term and of an atomic formula, and these remain the same for any of these languages
with the same number of free variables allowed in formulae.

Fix a cardinal $\lambda$ for the remainder of this section and the next four sections of the paper
(\S\S{}2-5). This will ensure that notions such
as atomic formulae and $\UA$, defined below, have a clear meaning.

\begin{definition}   Define \emph{terms in a language over $\tau$ in $\lambda$-many variables}
  as follows.
    \begin{enumerate}
    \item For $\alpha<\lambda$ the variable $v_\alpha$  is a term.
      \item Each constant symbol $c\in  \mathcal C$ is a term.
    \item If $f$ is an $n$-ary
    function symbol and $t_0$, \dots, $t_{n-1}$ are terms then so is $f(t_0,\dots,t_{n-1})$.
    \end{enumerate}
\end{definition}

\begin{definition} The \emph{atomic formulae in a language over $\tau$ in $\lambda$-many variables}
  are defined as follows.
     \begin{enumerate}
\item If $t_0$ and $t_1$ are terms, then $t_0=t_1$ is an atomic formula.
\item If $t_0$,\dots, $t_{n-1}$ are terms and $R$ is an $n$-ary relation symbol then\break
   $R(t_0,\dots t_{n-1})$ is atomic formula.
    \end{enumerate}
\end{definition}

\begin{definition}  An {atomic formula in a language over $\tau$ in $\lambda$-many variables}
  is \emph{unnested} if it is of one of the forms
  \[ v_i=v_j,\sss\sss v_i = c,\sss\sss R(v_{i_0},\dots,v_{i_{n-1}})\hbox{ or }v_{i_n} = f(v_{i_0},\dots,v_{i_{n-1}}),\] where the
  $v_i$ are variables, $c$ is a constant symbol, $R$ is an $n$-ary relation symbol and $f$ is an $n$-ary function symbol.
  We write $\UA$ for the collection of these unnested atomic formulae.
\end{definition}

The interpretation of terms in a presheaf $M$ over $\Omega$ is covered
in \cite{BMPI},\S{}2 (painstakingly in view of infelicities in previous
treatments). That interpretation and the interpretation of atomic
formulae go through as in \cite{BMPI} for any $\lambda$, regardless of
what connectives and quantification are allowed. Accordingly, for example,
for $\phi$ atomic and $\bar{a}$ a tuple of sections from $M$, there is
no ambiguity writing $[\phi(\bar{a})]_M$.

\begin{definition}\label{defn_Var_FV} Let $\phi$ be an
  {atomic formula in a language over $\tau$ in $\lambda$-many variables}. Then $\Var(\phi)$ is the set of variables occuring in $\phi$.
\end{definition}

Note, for any such $\phi$ the set $\Var(\phi)$ is finite and all variables in it occur freely in $\phi$.

\section{Partial isomorphisms}\label{part_isoms}

\begin{definition}\label{defn_invariant} Suppose $M$, $N \in\psh{\Omega,L}$, $h:M \longrightarrow N$ is a
  presheaf morphism, $\phi(\bar{v})$ is an atomic formula and $p\in \Omega$.
  We say $\phi(\bar{v})$ is \emph{invariant under} $h$ \emph{in} $p$
  if \[ \hbox{ for all }\bar{a}\in {{}^{\lh(v)}|\dom(h)\on p|}\hbox{ we have }
  [\phi(\bar{a})]_M = [\phi(h``\bar{a})]_N .\]
  We say $\phi(\bar{v})$ is \emph{invariant under} $h$ if it is invariant under $h$ \emph{in} $\top$. 

  We also sometimes say, and this is the usage employed in \cite{BMPI}, that
  $\phi$ is \emph{preserved by} $h$ \emph{in} $p$, resp.~\emph{preserved by} $h$.
\end{definition}

Note that if $\phi(\bar{v})$ is invariant under $h$ it is invariant under $h$ in $p$ for every $p\in\Omega$, since
$\dom(h)\on p \subseteq \dom(h) = \dom(h)\on \top$.

We now define partial isomorphisms, a key ingrediant in back and forth arguments.    


\begin{definition}\label{subpresheaf} Recall from \cite{BMPI},  Definition (1.34) and Proposition (1.37), for $M$ a presheaf over $\Omega$ and $A\subseteq |M|$,
  the subpresheaf of $M$
  generated by $A$, $\tup{A}$, is given by $|\tup{A}| = \setof{a\on
    p}{a\in A \sss\sss\&\sss\sss p\in\Omega}$, $E^{\tup{A}} = E^M\on
  |\tup{A}|$ and $\on^{\tup{A}} = \on^M\on |\tup{A}|$.
\end{definition}

\begin{lemma}\label{ext_fns_to_psh_monos} Suppose $M$, $N$ are presheaves over $\Omega$, $A\subseteq |M|$, $B\subseteq |N|$, and $h:A\longrightarrow B$.
  Suppose   for all $a_0$, $a_1\in A$ we have $[a_0=a_1]_M = [h(a_0)=h(a_1)]_N$. Then we can extend $h$ to a presheaf monomorphism $h:\tup{A}\longrightarrow \tup{B}$
  by for $a\in A$ and $p\in \Omega$  setting $h(a\on p) = h(a)\on p$.
\end{lemma}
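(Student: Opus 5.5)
We need to recall presheaf-over-$\Omega$ conventions (Fourman–Scott/Higgs style, as in \cite{BMPI}). A presheaf $M$ has a set of sections $|M|$ with an extent $E(a)=[a=a]_M$ and restriction $a\on p$. The basic facts are $E(a\on p)=E(a)\wedge p$, $(a\on p)\on q = a\on(p\wedge q)$, $a\on E(a)=a$, and
\[ [a\on p = b\on q]_M = [a=b]_M\wedge p\wedge q .\]
Sections are equal iff $a=b\on E(a)$ and $b=a\on E(b)$. A presheaf morphism is a map preserving extents and restrictions (so, by the displayed identity, it is non-expansive on equality). A monomorphism is one that additionally preserves the equality value $[\cdot=\cdot]$ (equivalently, is injective).

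\textbf{Step 1: well-definedness.} The main obstacle is that the prescription $h(a\on p)=h(a)\on p$ must not depend on the representation: a section of $\tup{A}$ may be written as $a_0\on p_0 = a_1\on q_0$ in several ways. Suppose $a_0\on p = a_1\on q$. Then $E(a_0)\wedge p = E(a_1)\wedge q =: r$, and
\[ r = [a_0\on p = a_1\on q]_M = [a_0=a_1]_M\wedge p\wedge q. \]
The hypothesis gives $[a_0=a_1]_M=[h(a_0)=h(a_1)]_N$, and in particular (taking $a_0=a_1$) $E(h(a))=E(a)$ for each $a\in A$. Hence
\[ [h(a_0)\on p = h(a_1)\on q]_N = [a_0=a_1]_M\wedge p\wedge q = r, \]
and also $E(h(a_0)\on p)=E(a_0)\wedge p=r=E(h(a_1)\on q)$. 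Two sections whose equality value equals both of their extents are equal in a presheaf, since $[x=y]=E(x)=E(y)$ forces $x=y\on E(x)$, whence $x=y$. So $h(a_0)\on p = h(a_1)\on q$ and the extension is well defined. It clearly agrees with $h$ on $A$ (take $p=\top$), and its range lies in $\tup{B}$.

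\textbf{Step 2: morphism properties.} Extent is preserved: $E(h(a)\on p)=E(h(a))\wedge p=E(a)\wedge p=E(a\on p)$. Restriction commutes: $h((a\on p)\on q)=h(a\on(p\wedge q))=h(a)\on(p\wedge q)=(h(a)\on p)\on q$. So $h$ is a presheaf morphism.

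\textbf{Step 3: monomorphism.} For general sections of $\tup{A}$, the computation of Step 1 gives
\[ [h(a_0\on p)=h(a_1\on q)]_N=[a_0=a_1]_M\wedge p\wedge q=[a_0\on p=a_1\on q]_M, \]
so $h$ preserves equality values. Injectivity follows by running the argument of Step 1 in reverse: equal images have equal extents and equality value equal to those extents, so the preimages coincide. This yields a monomorphism under either formulation of the notion.

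The only delicate step is Step 1's appeal to the presheaf extensionality fact. I would cite it from \cite{BMPI}, Section 1, as a basic property of presheaves over $\Omega$.
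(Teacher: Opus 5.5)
Your proposal is correct and takes essentially the same route as the paper. Well-definedness comes from showing, via $[x\on p = y\on q] = [x=y]\wedge p\wedge q$ and $Eh(a)=Ea$, that the two candidate images have equal extents that coincide with their equality value, and then invoking extensionality of $N$; preservation of extents, restrictions and equality values then follows directly. Your write-up is slightly more thorough, since it treats arbitrary representations $a_0\on p = a_1\on q$, whereas the paper checks only the case of a common restriction $p$ (which suffices once both sides are restricted to their common extent).
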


\begin{proof} We, firstly, have to show the definition is a good one. So suppose $a_0$, $a_1\in A$, $p\in \Omega$ and $a_0\on p = a_1\on p$. We have
  $Ea_0= Eh(a_0)$ and $Ea_1 = Eh(a_1)$, by the hypothesis on $h$. Further, $Ea_0\on p = Ea_1\on p$. Thus $Eh(a_0)\on p = Eh(a_1)\on p$. Thus
  \begin{eqn}\begin{split} Eh(a_0)\on p = & Eh(a_1)\on p = Ea_0 \on p = [a_0\on p = a_1\on p]_M = [a_0 =a_1]_M\land p\\  = & [h(a_0)=h(a_1)]_N \land p = [h(a_0)\on p =h(a_1) \on p]_N.
    \end{split}
  \end{eqn}Thus  $h(a_0)\on p = h(a_1)\on p $ by the extensionality of $M$. Thus the definition is good. Moreover, we have also now have that $h$ commutes with extents and restrictions, and
  we also have \[ [a_0\on p = a_1\on p]_M = [h(a_0)\on p =h(a_1) \on p]_N = [h(a_0\on p) =h(a_1 \on p)]_N.\] Thus the extended function satisfies the definition for being
  a presheaf monomorphism.
\end{proof}

\begin{definition}\label{L_partial_isom}   Suppose $M$, $N \in\psh{\Omega,L}$, $A\subseteq |M|$, $B\subseteq |N|$, and
  $\tup{A}$, $\tup{B}$ are the subpresheaves of $M$, $N$, respectively, generated by $A$ and $B$.
  A bijection $h:A\longrightarrow B$ regarded as a partial function 
  $h:M\longrightarrow N$ is a \emph{partial isomorphism} (between $M$ and $N$)  if
  for the extension of $h$ to $h:\tup{A}\longrightarrow \tup{B}$ as given in Lemma (\ref{ext_fns_to_psh_monos}), all unnested atomic formulae are invariant under $h$ in $\top$, \emph{i.e.},~
  for all $n$-ary relation symbols $R \in L$, for all $c \in {C}$ and all $n$-ary function symbols $f$ 
  the following formulae  are invariant under $h$ in $\top$.
  \begin{enumerate}[align=parleft,labelsep=1cm, label=(\roman*)]
  \item $v_0 = v_1$,
  \item $R(v_0,\dots,v_{n-1})$
  \item $v=c$
  \item $f(v_0,\dots,v_{n-1})=v_n$
  \end{enumerate}
  Note, $h:\tup{A}\longrightarrow \tup{B}$ is a well defined presheaf monomorphism, by Lemma  (\ref{ext_fns_to_psh_monos}), since
  $v_0=v_1$ is invariant under $h$ in $\top$. Moreover, it satisfies (i)-(iv).
\end{definition}

\begin{notation} If $h:M\longrightarrow N$ is a partial isomorphism and $p\in \Omega$ write
 $M(p)$ for $\setof{a\in M}{Ea = p}$ and $h_p$ for $h\on M(p)$.
\end{notation}



\begin{proposition}\label{useful_prop} Suppose $M$, $N \in\psh{\Omega,L}$,
  $\setof{{\mathfrak c}^M}{c\in C}\subseteq A\subseteq |M|$,
  $\setof{{\mathfrak c}^N}{c\in C}\subseteq B\subseteq N$ and $h:A\longrightarrow B$ preserves extents.
  \begin{enumerate}[align=parleft,labelsep=1cm, label=\upshape(\alph*),rightmargin=4.7pt]
  \item Let $c\in C$. If $v=c$ is invariant under $h$ then $h({\mathfrak c}^M) = {\mathfrak c}^N$.
    \item Let $c\in C$.
    If $v_0=v_1$ is invariant under $h$ and $h({\mathfrak c}^M) = {\mathfrak c}^N$ then
    $v=c$ is invariant under $h$.
  \item Let $c$, $d\in C$. If $v=c$ and $v=d$ are invariant under $h$ then the formula $c=d$ is invariant under $h$.
  \item Let $\bar{c} \in C^m$ and let $R$ be an $(m+n)$-ary relation in $L$. If for each $j<m$ the formula $v=c_j$
    is invariant under $h$ and if the formula $R(v_0,\dots,v_{n-1},u_0,\dots,u_{m-1})$ is invariant under $h$, 
    then the formula  ${R(\bar{v},\bar{c})}$ is invariant under $h$.
  \end{enumerate}
\end{proposition}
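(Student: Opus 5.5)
The plan is to unwind Definition (\ref{defn_invariant}) in each case and use the presheaf-theoretic facts about interpreting constants, equality and relations from \cite{BMPI}. The key facts are:
\begin{itemize}
\item the constant ${\mathfrak c}^M$ is a global-ish section of extent $E{\mathfrak c}^M$, and $[a=c]_M = [a={\mathfrak c}^M]_M$;
\item $[a=b]_M = Ea\land Eb$ holds exactly when $a\on(Ea\land Eb)=b\on(Ea\land Eb)$, with extensionality: $[a=b]_M=Ea=Eb$ iff $a=b$;
\item interpretations of atomic formulae with constants are computed by substituting ${\mathfrak c}^M$, i.e.\ $[R(\bar a,\bar c)]_M=[R(\bar a,{\mathfrak c}^M_0,\dots)]_M$.
\end{itemize}
Since $h$ preserves extents, $Eh(a)=Ea$ throughout.

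For (a), apply invariance of $v=c$ to $a={\mathfrak c}^M\in A$. This gives $E{\mathfrak c}^M=[{\mathfrak c}^M=c]_M=[h({\mathfrak c}^M)=c]_N=[h({\mathfrak c}^M)={\mathfrak c}^N]_N$. Applying invariance in the other direction is not available, so instead note that ${\mathfrak c}^N\in B$. I would like also to know $E{\mathfrak c}^N=E{\mathfrak c}^M$. Here the running assumption that $\mathcal C$ is a presheaf, with constants interpreted compatibly with extents, may be needed. Alternatively, the bound $[h({\mathfrak c}^M)={\mathfrak c}^N]\le E{\mathfrak c}^N$ gives $E{\mathfrak c}^M\le E{\mathfrak c}^N$. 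Then $h({\mathfrak c}^M)$ and ${\mathfrak c}^N$ agree on $E{\mathfrak c}^M$, and $h({\mathfrak c}^M)$ has exactly that extent. So $h({\mathfrak c}^M)={\mathfrak c}^N\on E{\mathfrak c}^M$. Equality then follows if extents of constants match, which I expect to extract from the convention that the extent of $c$ is part of the signature data and is the same in $M$ and $N$. \textbf{This extent-matching is the main obstacle}, and I would check the \cite{BMPI} conventions for constants to settle it.

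For (b), given $a\in A$ (or in $|\tup{A}|$ restricted appropriately), the computation is
\[
[a=c]_M=[a={\mathfrak c}^M]_M=[h(a)=h({\mathfrak c}^M)]_N=[h(a)={\mathfrak c}^N]_N=[h(a)=c]_N,
\]
using invariance of $v_0=v_1$ on the pair $(a,{\mathfrak c}^M)$. For (c), by (a) we have $h({\mathfrak c}^M)={\mathfrak c}^N$ and $h({\mathfrak d}^M)={\mathfrak d}^N$. The sentence $c=d$ has no free variables, so its invariance means $[c=d]_M=[c=d]_N$. Apply invariance of $v=d$ to $a={\mathfrak c}^M$:
\[
[c=d]_M=[{\mathfrak c}^M=d]_M=[h({\mathfrak c}^M)=d]_N=[{\mathfrak c}^N=d]_N=[c=d]_N.
\]

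For (d), the steps are:
\begin{enumerate}
\item By (a), $h({\mathfrak c}_j^M)={\mathfrak c}_j^N$ for each $j<m$.
\item For $\bar a$ from the domain, $[R(\bar a,\bar c)]_M=[R(\bar a,{\mathfrak c}^M_0,\dots,{\mathfrak c}^M_{m-1})]_M$.
\item Invariance of $R(\bar v,\bar u)$ applied to the tuple $\bar a\,{\mathfrak c}^M_0\cdots{\mathfrak c}^M_{m-1}$ gives $[R(h``\bar a,h({\mathfrak c}^M_0),\dots)]_N=[R(h``\bar a,\bar c)]_N$.
\end{enumerate}
The only delicate point is that the constants' sections lie in $A$, which is assumed.
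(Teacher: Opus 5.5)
Your proposal is correct and follows essentially the same route as the paper in all four parts. In (a) the paper also applies invariance of $v=c$ at ${\mathfrak c}^M$ to get $[h({\mathfrak c}^M)={\mathfrak c}^N]_N=E_Cc$ and concludes by extensionality; the extent-matching you flag is exactly what it reads off from the definition of the interpretation of constants, namely $E_N{\mathfrak c}^N=E_Cc=E_M{\mathfrak c}^M=E_N h({\mathfrak c}^M)$, while (b)--(d) are the same computations as yours (invariance of $v_0=v_1$ on $(a,{\mathfrak c}^M)$; invariance of $v=d$ at ${\mathfrak c}^M$ plus (a); invariance of $R(\bar v,\bar u)$ at $\bar a\,\bar{\mathfrak c}^M$ plus (a)).
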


\begin{proof}
  (a) Let $c\in C$. Observe that \[E_N{\mathfrak c}^N = E_Cc = E_M {\mathfrak c}^M = E_N h({\mathfrak c}^M) ,\]
       where the first two equalities come from the definition of the interpretation of terms and the third
       equality comes from $h$ being a presheaf morphism.
       On the other hand,
       \begin{eqn}\begin{split}
        E_Cc = E{\mathfrak c}^M = & [{\mathfrak c}^M = {\mathfrak c}^M]_M = [(v = c)({\mathfrak c}^M)]_M = \\
         & [ (v = c)(h({\mathfrak c}^M)]_N = [h({\mathfrak c}^M) = {\mathfrak c}^N]_N 
         \end{split}\end{eqn}where, again the first equality comes from the definition of the interpretation of terms,
          the second equality is from the definition of $[.=.]_M$, the fourth is from the invariance of $v=c$ under $h$,
          and the third and fifth come from the
       definition of the interpretation of atomic formulae. Hence, by extentionality, $h({\mathfrak c}^M)={\mathfrak c}^N$.

   (b) Let $a\in A$.  Using the definition of the interpretation of atomic formulae  for the first and third equalities,
       Definition(\ref{L_partial_isom})(i), the invariance of $v_0=v_1$ under $h$,
       for the second equality and the hypothesis for the final equality, we have
       \begin{eqn}\begin{split}
       [a = {\mathfrak c}^M]_M = & [(v_0 = v_1)(\set{a,{\mathfrak c}^M})]_M = 
       [(v_0 = v_1)(h``\set{a,{\mathfrak c}^M})]_N = \\
       & [ h(a) = h({\mathfrak c}^M)]_N =  [ h(a) = {\mathfrak c}^N]_N
                \end{split}\end{eqn}

    (c) Using, again, the definition of the interpretation of atomic formulae  for the first and third equalities,
       the invariance of $v = d$ under $h$ for the second equality, and the conclusion of (a), which follows
       from the invariance of $v = c$, for the fourth equality,
 \begin{eqn}\begin{split}
           [{\mathfrak c}^M = {\mathfrak d}^M]_M = & [(v = d)({\mathfrak c}^M)]_M =
       [(v = d)(h({\mathfrak c}^M)]_N = \\
      & [h({\mathfrak c}^M) = {\mathfrak d}^N)]_N  = [{\mathfrak c}^N = {\mathfrak d}^N]_N .
                \end{split}\end{eqn}

    (d) Suppose  $R$ is an $(m+n)$-ary relation in $L$,
    $\bar{a}\in |A|^n$ and $\bar{c} \in C^m$. Then
     \[    [R(\bar{a},\bar{\mathfrak{c}}^M)]_M = [R(h``\bar{a},h``\bar{\mathfrak{c}}^M)]_N = 
           [R(h``\bar{a},\bar{\mathfrak{c}}^N)]_N \sss,\]
           applying firstly the invariance of $R(v_0,\dots,v_{n-1},u_0,\dots,u_{m-1})$ under $h$  and then
           the conclusion of (a) applied to each $c_j$ for $j<m$.
\end{proof}

\section{Refinements of partial isomorphisms}\label{Q_alpha_p}


\begin{definition}\label{Q_mu_alpha_p}  For each $p\in \Omega$ we will define a decreasing sequence of sets
  $Q_{\alpha}(p)$, for $\alpha\in \On$, by induction.
   
  Let $Q_0$ be the set of partial isomorphisms between
  elements of $\psh{\Omega,L}$. For all $p\in \Omega$ we set $Q_{0}(p) = Q_0$.

  One could instead for $p\in\Omega$ take
  \[ Q_0(p) = \setof{h\in Q_0}{\bigwedge \setof{Ec}{c\in \dom(h)} = p}.\]
  However, this does not seem to affect the results or their proofs.
  
  Now suppose $\beta<\alpha$ and for all $p\in\Omega$ we have defined $Q_{\beta}(p)$.

  If $\alpha$ is a limit ordinal then we set $Q_{\alpha}(p) = \bigcap_{\beta<\alpha} Q_{\gamma}(p)$.    

  If $\alpha = \beta+1$  
  then $h:M\longrightarrow N\in Q_{\alpha}(p)$ if and only if $h\in Q_{\beta}(p)$ and

    \begin{enumerate}[label=(\alph*)]
  \item  (Forth) For all $\bar{c}=\tupof{c_i}{i<\gamma} \in |M\on p|^{<\mu}$ there is some set $\setof{(h_j,q_j,\bar{d_j})}{j\in J}$ such that for each
    $j\in J$ we have $h_j\in Q_\beta(q_j)$, $h_{q_j} \subseteq h_j$, $d_j = \tupof{d_{ji}}{i<\gamma}\in |N\on E\bar{c}|^{<\mu}$, $Ed_j=q_j$,
    $ \bar{c}\on q_j \subseteq \dom (h_j)$, for all $i<\gamma$ we have $h_j(c_i\on q_j)=d_{ji}$, 
  and $\Vee_{j\in J} q_j = E\bar{c}$.
\vskip12pt
    
\item  (Back) 
  For all $\bar{d}=\tupof{d_i}{i<\gamma} \in |N\on p|^{<\mu}$ there is some set \discretionary{}{}{}
  ${\setof{(h_j,q_j,\bar{c}_j)}{j\in J}}$,  such that for each $j\in J$ we have
  $h_j\in Q_\beta(q_j)$, $h_{q_j} \subseteq h_j$, $ d\on q_j \subseteq \rge (h_j)$, $c_j = \tupof{c_{ji}}{i<\gamma}\in |M\on E\bar{d}|^{<\mu}$,
  $Ec_j=q_j$, for all $i<\gamma$ we have $h_j(c_{ji})=d_i\on q_j$, 
  and $\Vee_{j\in J} q_j = E\bar{d}$.
      \end{enumerate}
\end{definition}

\begin{definition}
Now let  $\bar{a} \in {^{<\lambda} |M\on p|}$, $\bar{b}\in {^{\lh(\bar{a})} |N\on p|}$. Let
  $h:\bar{a}\longrightarrow \bar{b}$ be given by  $h(a_i)=b_i$ for each $i<\lh(\bar{a})$.
  For $\alpha\in \On$ let  $(M,\bar{a})\sim^p_\alpha (N,\bar{b})$ if $h\in Q_{\alpha}(p)$.
\end{definition}

\begin{definition} We write $M\sim_\alpha N$ if $Q_\alpha(\top)$ is non-empty. 
\end{definition}

\section{Games}\label{games}

We can give equivalents to the existence of elements of the $Q_\alpha(p)$ in terms of player II having a winning strategy in a dynamic Ehrenfeucht-Fra\"iss\'e style game.
The adjective dynamic signifies that although each run of the game is finite the length is not predetermined.

\begin{definition} Let $M$, $N\in \pshol$, $\alpha\in \On$, $\bar{a}\in {^{<\mu}|M|}$, $\bar{b}\in {^{\lh(\bar{a})}|N|}$ and let $h:M\longrightarrow N$ be a partial isomorphism
  with $h$, where for
  all $i<\lh(a)$ we have   $h(a_i) = b_i$ (and so $E\bar{a}=E\bar{b}$).
  
  We define $G^\mu_\alpha(M,\bar{a},N,\bar{b},h)$ to be the following game. 
  At move $n$ player I plays $(s_n,\alpha_n)$ where $\bar{s}_n\in {^{<\mu}|M|} \cup {^{<\mu}|N|}$ and $\alpha_n<\alpha$. At this point there will be a function $h^*_n$ determined by
  $h$ and the run of the game to this point.
  Player II responds with a set $\setof{(h_j,q_j,\bar{t}_j)}{j\in J}$.

  The players must obey the following rules. 

  At move $0$, player I must ensure $E\bar{s}_0 \le E\bar{a}$ and choose $\alpha_0<\alpha$. Set $h^*_0=h$. As a notational convenience, but playing no part in the definition
  of the game, set $q^*_0=E\bar{a}$ and $\bar{t}^*_0 = \bar{b}$.

  At move $n$, player II must ensure if $\bar{s}_n\in {^{<\mu}|M|}$ then each $\bar{t}_j\in {^{\lh(\bar{s}_n)}|N|}$ and if $\bar{s}_n\in |N|$ each $\bar{t}_j\in  {^{\lh(\bar{s}_n)}|M|}$.

  Moreover, player II must ensure for each $j\in J_n$ that $E\bar{t}_j=q_j$, $h_j$ is a partial isomorphism
  with $h^*_n \on M(q_j) \subseteq h_j$ and $E\bar{s}_n = \bigvee \setof{E\bar{t}_j}{j\in J_n}$. Player II must also ensure if $\bar{s}_n\in |M|$ then $h_j``\bar{s}_n\on q_j=\bar{t}_j$
  and if $\bar{s}_n\in |N|$ then $h_j``\bar{t}_j = \bar{s}_n\on q_j$.

  At move $n+1$, player I must choose $\alpha_{n+1}<\alpha_n$, select $j\in J_n$, and ensure that for this choice of $j$ we have $E\bar{s}_{n+1} \le E\bar{t}_j$.

  Define, at this point, $h^*_{n+1}$ to be the $h_j$ for this choice of $j$. It is convenient for the proof of Theorem (\ref{thm_equiv_ws_Qs})
  to also set $q^*_{n+1}$ to be $q_j$ and $\bar{t}^*_{n+1}$ to be the $t_j$ for this choice of $j$, although this notation convenience plays no r\^ole in the definition of the game.

  A player loses a run of the game wins when they cannot make a legal move, at which point the other player is the winner.
\end{definition}

Clearly since part of the constraints on player I's legal moves is that $\alpha>\alpha_0>\alpha_1\dots$ each run of the game ends in a finite number of moves. 

\begin{theorem}\label{thm_equiv_ws_Qs} Let $\alpha\in\On$, $M$, $N\in \pshol$, $\bar{a}\in {^{<\mu}|M|}$, $\bar{b}\in {^{\lh(\bar{a})}|N|}$ and let
  $h:M\longrightarrow N$ be a partial isomorphism, where
  we have   $h``\bar{a} = \bar{b}$ (and so $E\bar{a}=E\bar{b}$). Assume that $|M|\cap |N|=\emptyset$. 
  Then $h$ shows $(M,\bar{a})\sim^{E\bar{a}}_\alpha (N,\bar{b})$ if and only if
  player II has a winning strategy for the game $G^\mu_\alpha(M,\bar{a},N,\bar{b},h)$.
\end{theorem}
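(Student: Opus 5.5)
We argue by induction on $\alpha$, proving the statement simultaneously for all $M$, $N$, $\bar{a}$, $\bar{b}$ and partial isomorphisms $h$ with $h``\bar{a}=\bar{b}$. The base case $\alpha=0$ is immediate. On one side, $h\in Q_0(E\bar{a})$ holds just because $h$ is a partial isomorphism. On the other, in $G^\mu_0(M,\bar{a},N,\bar{b},h)$ player I has no legal first move, since there is no $\alpha_0<0$, so player II wins every run vacuously. The whole argument rests on one observation: a single move of the game, namely player I proposing $\bar{s}_0$ with $E\bar{s}_0\le E\bar{a}$ and an ordinal $\alpha_0<\alpha$, followed by player II's answer $\setof{(h_j,q_j,\bar{t}_j)}{j\in J}$, is a literal transcription of one application of clause (a) (Forth) or clause (b) (Back) of Definition (\ref{Q_mu_alpha_p}). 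Player II's constraints $h^*_0\on M(q_j)\subseteq h_j$, $E\bar{t}_j=q_j$ and $\Vee q_j=E\bar{s}_0$ correspond exactly to the conditions imposed on the witnessing families. The hypothesis $|M|\cap|N|=\emptyset$ guarantees that the move $\bar{s}_n$ determines unambiguously which side it is played on, and hence whether the Forth or the Back clause is the relevant one.

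For the forward direction, suppose $h\in Q_\alpha(E\bar{a})$. We first note that the $Q$'s decrease, so $h\in Q_{\alpha_0+1}(E\bar{a})$ for any $\alpha_0<\alpha$. (When $\alpha$ is a limit this uses the definition by intersection.) Player II answers I's move $(\bar{s}_0,\alpha_0)$ with the family supplied by Forth or Back at level $\alpha_0+1$, applied with $p=E\bar{a}$; this is legal because $E\bar{s}_0\le E\bar{a}$. Each $h_j$ then lies in $Q_{\alpha_0}(q_j)$ and extends $h$ on the part of the domain with extent $q_j$. Suppose I next picks some $j$. The remainder of the run is then a run of $G^\mu_{\alpha_0}$ started from $h_j$, with the tuple $\bar{s}_0\on q_j$ on one side and $\bar{t}_j$ on the other as the new distinguished pair. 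By the induction hypothesis player II has a winning strategy there, and II simply follows it. Gluing these substrategies together over all the possible first moves gives a winning strategy for the original game; in writing this out one needs choice to select a witnessing family for each move.

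For the converse, suppose $\sigma$ is a winning strategy for player II. We show $h\in Q_\beta(E\bar{a})$ for every $\beta\le\alpha$ by an inner induction on $\beta$. Limit stages are immediate. For a successor $\beta+1\le\alpha$, let I play $(\bar{c},\beta)$ with $\bar{c}$ taken from $M\on E\bar{a}$ (respectively from $N\on E\bar{a}$). The response of $\sigma$ gives a family $\setof{(h_j,q_j,\bar{t}_j)}{j\in J}$, and the restriction of $\sigma$ to continuations in which I next picks $j$ is a winning strategy for II in $G^\mu_\beta$ from $h_j$ with the new tuples. The outer induction hypothesis then yields $h_j\in Q_\beta(q_j)$, so the family witnesses Forth (respectively Back) at level $\beta+1$.

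I expect the main obstacle to be the bookkeeping needed to check that the shifted game really is an instance of $G^\mu_\beta(M,\bar{c}\on q_j,N,\bar{t}_j,h_j)$. Two issues arise. First, the rule that later moves must satisfy $E\bar{s}_{n+1}\le E\bar{t}^*_n$ has to match the requirement $E\bar{s}_0\le E\bar{a}'$ of the new game, where $\bar{a}'$ is its distinguished tuple. Second, the clause $h_{q_j}\subseteq h_j$, which is restriction to extent exactly $q_j$, has to agree with the game's condition $h^*_n\on M(q_j)\subseteq h_j$. A related point to watch is that $Q_\beta(p)$ only quantifies over tuples in $|M\on p|$. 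We therefore want the hypothesis of the new game to be $p=q_j=E\bar{t}_j$ and not some larger value. If this mismatch causes trouble, the first fix I would try is to observe that testing the Forth and Back clauses on tuples of smaller extent is harmless, and so to prove an auxiliary monotonicity lemma: if $q\le p$ and $h\in Q_\beta(p)$, then $h\in Q_\beta(q)$.
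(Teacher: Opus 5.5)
Your proof is correct. It rests on the same observation as the paper's, that one round of the game is one application of Forth or Back. It is organised differently, and in one direction it is more careful.

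For $\Longrightarrow$ the paper does not use subgames. Player II maintains the invariant $h^*_n\in Q_{\alpha_n+1}(q^*_n)$. This holds at move $0$ because $\alpha_0<\alpha$. It persists because the chosen $h_j$ lies in $Q_{\alpha_n}(q_j)$ and $\alpha_{n+1}+1\le\alpha_n$. Player II simply answers each $\bar{s}_n$ with the Forth or Back family of $h^*_n$ at level $\alpha_n+1$. This is shorter than your induction on $\alpha$ and avoids identifying continuations as new games.

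For $\Longleftarrow$, however, the paper only notes that $\sigma$ has a legal answer to $(\beta,\bar{s})$. Legality only makes each $h_j$ a partial isomorphism, i.e.\ puts it in $Q_0$. Forth and Back at level $\beta+1$ require $h_j\in Q_\beta(q_j)$. You restrict $\sigma$ to the runs in which I then selects $j$ and apply the outer induction hypothesis to that subgame. That step is exactly what supplies $h_j\in Q_\beta(q_j)$, so your version fills a step the paper leaves unjustified.

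The bookkeeping you were worried about goes through without the monotonicity lemma. The continuation is literally $G^\mu_\beta(M,\bar{a}',N,\bar{b}',h_j)$, where $\bar{a}'=\bar{s}_0\on q_j$ and $\bar{b}'=\bar{t}_j$ if $\bar{s}_0$ came from $M$; the roles are swapped if it came from $N$. In either case $E\bar{a}'=q_j$, using $q_j\le E\bar{s}_0$ and $E\bar{t}_j=q_j$. Consequently:
\begin{itemize}
\item the old rule $E\bar{s}_1\le E\bar{t}_j$ is precisely the new move-$0$ rule;
\item the game's condition $h^*_n\on M(q)\subseteq h'$ is by definition the clause $h_q\subseteq h'$ of Definition (\ref{Q_mu_alpha_p}), since $h_q$ denotes $h\on M(q)$;
\item the induction hypothesis yields $h_j\in Q_\beta(q_j)$ at exactly the required level.
\end{itemize}

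One small point you gloss over in $\Longrightarrow$. The game only requires $E\bar{s}_0\le E\bar{a}$, which constrains the meet of the extents. Forth and Back quantify over tuples lying entrywise in $|M\on E\bar{a}|$ (resp.\ $|N\on E\bar{a}|$). So apply them to $\bar{s}_0\on E\bar{s}_0$, which has the same restriction to every $q_j\le E\bar{s}_0$.
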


\begin{proof} (Very similar to \cite{Vaananen}, Proposition 7.17.) ``$\Longrightarrow$''. We define a winning strategy $\tupof{\sigma_n}{n<\omega}$ for player II. Player II's strategy will ensure that
  in any run of the game for each $n$ we have $h^*_n \in  Q_{\alpha_n+1}(q^*_n)$.
  Suppose player II has followed the strategy in response to player I's first $n$ moves and, as their $n+1$th move player I then
  plays $\alpha_n$, chooses $j\in J_{n-1}$ if $n>0$ and plays $\bar{s}_n$. Since the $\alpha$s are decreasing we have $h^*_n\in Q_{\alpha_n+1}(q^*_n)$.
  We set $\sigma_n((\alpha_0,\bar{s}_0),\dots,(\alpha_n,\bar{s}_n))$ to be a set $\setof{(h_j,q_j,\bar{t}_j)}{j\in J}$ as given by the forth property or the back
  property for $h^*_n$, as applicable, when applied to $\bar{s}_n$.  This is clearly a winning strategy.

  ``$\Longleftarrow$''. Suppose $\sigma=\tupof{\sigma_n}{n<\omega}$ is a winning strategy for player II. We show by induction that $h\in Q_{\alpha}(E\bar{a})$.
  This is so for $\alpha=0$ by the hypothesis that $h$ is a partial isomorphism. For limit $\beta\le \alpha$ we are done by induction and the definition of $h$ as being an element of $Q_\beta(E\bar{a})$
  if and only if it is an element of each $Q_\gamma(E\bar{a})$ for $\gamma<\beta$. Finally, suppose $\beta<\alpha$ and $h\in Q_\beta(E\bar{a})$. For any $\bar{s}\in {^{<\mu}|M|}\cup {^{<\mu}|N|}$
  consider the play of the game where at move $0$ player $1$ plays the move $(\beta,s)$. Since $\sigma$ is a winning strategy, player II has a legal response to this movc and
  the response witnesses either the forth property or the back property for $\bar{s}$ depending on whether $\bar{s}$ was taken from $|M|$ or $|N|$. Thus $h\in Q_{\beta+1}(E\bar{a})$.
\end{proof}

\section{Scott-Karp analysis for presheaves of models without sentences}\label{function_analysis}

In this section we give a `Scott-Karp analysis'\footnote{$\sss$Following \cite{Marker}'s usage.} for
presheaves of models. A central point is that we define certain sequences of functions $F^{\alpha}_{M,\bar{a}}$, $G^{\alpha}_{M,N,\bar{a}}$ and
$H^{\alpha}_{M,\bar{a}}$, for ordinals $\alpha$, which will serve as hierarchies of invariants in the sense outlined in the
introduction.\footnote{Infinitary logic is commonly used in the formulation of invariants in Scott-Karp analysis. However we avoid this here.
We also avoid using the unary connectives employed. in \cite{SC} and \cite{B00}.}  As in any Scott-Karp-type analysis, the point, eventually, as will be seen in
Proposition (\ref{portmanteau_prop}), is that
for any two pairs $(M,\bar{a})$ and $(N,\bar{b})$ the longer the sequences of invariants agree the greater the similarity between the structures
(in terms of agreeing on instantiations of formulae of greater and greater modified rank), and \emph{vice versa}.

\begin{notation}   If $\alpha\in \On$, $X$ and $Z$ are sets or even proper classes,  and
  $f:X \times \alpha \longrightarrow Z $, and $\beta\le \alpha$ write
  $f\on \beta$ for the function $g:X\times \beta\longrightarrow Z$ given by for each $x\in X$ and $\gamma<\beta$ setting
  $g(x,\gamma) = f(x,\gamma)$.

  In the second notation the restriction is only on the second coordinate. That we use the same symbol here as for restrictions in presheaves should in
  practice cause no confustion.
\end{notation}

\begin{definition}\label{introducing_the_functions_tuples}  We will define for
  $M$, $N\in \pshol$, $\bar{a}\in {^{<\mu}|M|}$ and $\alpha\in\On$ functions $F^{\alpha}_{M,\bar{a}}$, $G^{\alpha}_{M,N,\bar{a}}$ and
  $H^{\alpha}_{M,\bar{a}}$, each with codomain $\Omega$, by induction on $\alpha$.



The definition of the $H^{\alpha}_{M,\bar{a}}$ is arguably the easiest to assimilate - at least in terms of its domain being readily comprehensible. However, the domain
of $H^{\alpha}_{M,\bar{a}}$, for $\alpha>0$, is a proper class. The $F^{\alpha}_{M,\bar{a}}$ are a variation on the $H^{\alpha}_{M,\bar{a}}$ with more complex,
recursively defined, yet set-sized domains, which some readers may favour in assuaging foundational qualms. The $G^{\alpha}_{M,N,\bar{a}}$ are a further variation,
allowing us to simplify some arguments, however at the cost of weakening the results provable.

Let $M\in \pshol$ and  $\bar{a}\in {^{\lh(\bar{a})}|M|}$. Let $I_{M,\bar{a}}$ be the function given by
setting \begin{eqn}\begin{split}
      \dom(I_{M,\bar{a}}) =  \setof{(\psi,f)}{\psi \in Y\sss\sss\&\sss\sss \exists \ssss n & \in \omega \sss\sss \Var(\psi)= \set{v_0,\dots,v_{n-1}} \\
         &\sss\sss\&\sss\sss
    f:N\longrightarrow \lh(\bar{a})} ,
\end{split}\end{eqn}and for $(\psi,f)\in\dom(I_{M,\bar{a}})$ seting
\[ I_{M,\bar{a}}(\psi,f)  = [\psi(a_{f(0)},a_{f(1)},\dots a_{f(n-1)}))]_M.\]

(We can think of the `$I$' as indicating ``interpretation'', of $\psi$ in $M$ at $\bar{a}$, is afoot.)

Let $M$, $N\in \pshol$ and $\bar{a}\in {^{<\lambda}|M|}$. Set
  \[ F^0_{M,\bar{a}} = G^0_{M,N,\bar{a}} = H^0_{M,\bar{a}} = I_{M,\bar{a}} . \]

  (So we could also think of the letter `$I$' as indicating a collection of  ``initializers'' of the sequences 
   of $F^{\alpha}_{M,\bar{a}}$, $G^{\alpha}_{M,N,\bar{a}}$ and $H^{\alpha}_{M,\bar{a}}$ for $\alpha\in \On$.)

Let $M$, $N\in \pshol$, $\bar{a}\in {^{<\lambda)}|M|}$ and $0<\alpha\in\On$.  The elements of the domains
  of $F^{\alpha}_{M,\bar{a}}$, $G^{\alpha}_{M,N,\bar{a}}$ and $H^{\alpha}_{M,\bar{a}}$ will consist of pairs (in the case of $F^{\alpha}_{M,\bar{a}}$)

  or triples (in the cases of  $G^{\alpha}_{M,N,\bar{a}}$ and $H^{\alpha}_{M,\bar{a}}$) whose last element is some ordinal less than $\alpha$.

  For all $\beta<\alpha$ we will have
  \[ F^{\alpha}_{M,\bar{a}}\on \beta = F^{\beta}_{M,\bar{a}}, \sss\sss G^{\alpha}_{M,N,\bar{a}}\on \beta = G^{\beta}_{M,N,\bar{a}}
\hbox{ and } H^{\alpha}_{M,\bar{a}}\on \beta = H^{\beta}_{M,\bar{a}} .\]

Thus in each case for limit $\alpha$ the functions are completely determined by induction by the definitions for $\beta<\alpha$.

It remains to define the various functions for successor ordinals. Let
  $M$, $N\in \pshol$, $\bar{a}\in {^{<\lambda}|M|}$ and $\alpha\in\On$.

We, first of all, define $H^{\alpha+1}_{M,\bar{a}}(.,.,\alpha)$.
We set \begin{eqn}\begin{split}
      \dom(H^{\alpha+1}_{M,\bar{a}}(.,.,\alpha)) = \setof{(K,\bar{s}\bar{t}, \alpha)}{K\in & \pshol, \\ \sss \bar{s}\in {^{\lh(\bar{a})}|K|},
       & \bar{t}\in {^{<\mu}|K|} \sss\sss\&\sss\sss E\bar{t} \le E\bar{a}\land E\bar{s}} .
  \end{split}
\end{eqn}


  If $(K,\bar{s}\bar{t},\alpha) \in \dom(H^{\alpha+1}_{M,\bar{a}})$ then
  \[ H^{\alpha+1}_{M,\bar{a}}(K,\bar{s}\bar{t},\alpha) = \bigvee \setof{E\bar{c}}{\bar{c}\in {^\gamma|M|}
    \sss\sss\&\sss\sss E\bar{c} \le E\bar{t} \sss\sss\& \sss\sss H^{\alpha}_{K,\bar{s}\bar{t}\on Ec} = H^{\alpha}_{M,\bar{a}\bar{c}}}  .\]

We, next, define $G^{\alpha+1}_{M,N,\bar{a}}(.,.,\alpha)$.
We set \begin{eqn}\begin{split}
      \dom(G^{\alpha+1}_{M,N\bar{a}}(.,.,\alpha)) = \setof{(K,\bar{s}\bar{t}, \alpha)}{K\in & \set{M,N}, \\ \sss \bar{s}\in {^{\lh(\bar{a})}|K|},
       & \bar{t}\in {^{<\mu}|K|} \sss\sss\&\sss\sss E\bar{t} \le E\bar{a}\land E\bar{s}} .
  \end{split}
\end{eqn}


If $(K,\bar{s}\bar{t},\alpha) \in \dom(G^{\alpha+1}_{M,N,\bar{a}})$ then if $K=M$ set $L=N$ and if $K=N$ set $L=M$.

If $(K,\bar{s}\bar{t},\alpha) \in \dom(G^{\alpha+1}_{M,N,\bar{a}})$ then
\[ G^{\alpha+1}_{M,N,\bar{a}}(K,\bar{s}\bar{t},\alpha) =
\bigvee \setof{E\bar{c}}{\bar{c}\in |M| \sss\sss\&\sss\sss E\bar{c} \le E\bar{t} \sss\sss\& \sss\sss G^{\alpha}_{K,L,\bar{s}\bar{t}\on E\bar{c}} = G^{\alpha}_{M,N,\bar{a}\bar{c}} }  .\]

Note that the definition is not symmetrical -- the order of the $M$ and $N$ in the subscripts matters.
(Of course, the $\bar{a}$ being a tuple coming from $M$ already
shows asymmetry.)

Note also that whilst in the definition of $H^{\alpha+1}_{M,\bar{a}}$ the $K$ was an arbitrary element of the proper class $\pshol$, in the definition of
$G^{\alpha+1}_{M,N,\bar{a}}$ the $K$ is limited to being one of $M$ and $N$.

We, finally, define $F^{\alpha+1}_{M,\bar{a}}(.,\alpha)$.
We set \begin{eqn}\begin{split}
      \dom(F^{\alpha+1}_{M,\bar{a}}(.,.,\alpha)) = \setof{(x, \alpha)}{\exists K\in & \pshol, \sss \bar{s}\in {^{\lh(\bar{a})}|K|},\\ 
       & \bar{t}\in {^{<\mu}|K|} \sss\sss\&\sss\sss E\bar{t} \le E\bar{a}\land E\bar{s} \sss\sss\&\sss\sss x = F^{\alpha}_{K,\bar{s}\bar{t}} } .
  \end{split}
\end{eqn}


  If $K\in\pshol$ and
  $\bar{s}\bar{t}\in {^{\lh(\bar{a})+\gamma}|K|}$ with $E\bar{t} \le E\bar{a}\land E\bar{s}$ then
  \[ F^{\alpha+1}_{M,\bar{a}}(F^\alpha_{K,\bar{s}\bar{t}},\alpha) =
  \bigvee \setof{E\bar{c}}{\bar{c}\in {^{\lh(\bar{t})}|M|} \sss\sss\&\sss\sss E\bar{c} \le E\bar{t} \sss\sss\& \sss\sss F^{\alpha}_{K,\bar{s}\bar{t}\on E\bar{c}} = F^{\alpha}_{M,\bar{a}\bar{c}}} .\]
\end{definition}

Note, since $\card{{{^\UA}{\Omega}}} = \card{\Omega}^{\lower.4em\card{\textsuperscript{$\UA$}}}$, by induction $\dom(F^{\alpha+1}_{M,\bar{a}})$ is a set.

\begin{lemma}\label{fn_range_bounded_tuples} In each case the range of $F^{\alpha}_{M,\bar{a}}$, $G^{\alpha}_{M,N,\bar{a}}$ and $H^{\alpha}_{M,\bar{a}}$ is bounded by $E\bar{a}$.
\end{lemma}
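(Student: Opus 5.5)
The proof goes by transfinite induction on $\alpha$, treating $F$, $G$ and $H$ in parallel, since their recursive clauses have the same shape. The claim is that every value of $F^{\alpha}_{M,\bar{a}}$, $G^{\alpha}_{M,N,\bar{a}}$ and $H^{\alpha}_{M,\bar{a}}$ lies below $E\bar{a}$. Here $E\bar{a}$ is the meet of the extents of the entries of $\bar{a}$, and for the empty tuple it is $\top$.

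For $\alpha=0$ all three functions equal $I_{M,\bar{a}}$, so each value has the form $[\psi(a_{f(0)},\dots,a_{f(n-1)})]_M$ with $\psi$ an (unnested) atomic formula. From the interpretation of atomic formulae in \cite{BMPI}, \S{}2, such a value lies below the meet of the extents of the sections substituted into it:
\begin{itemize}
\item for equality, $[x=y]_M\le Ex\land Ey$ by the axioms on $E^M$;
\item for relation symbols, $[R(\bar{x})]_M\le E\bar{x}$ because relations are strict;
\item for function symbols, $[f(\bar{x})=y]_M$ is bounded by the extent of $f(\bar{x})$, which is $E\bar{x}$, and by $Ey$;
\item for constant symbols, $[x=c]_M \le Ex$.
\end{itemize}
Every substituted section is an entry of $\bar{a}$, so $E a_{f(i)}\ge E\bar{a}$ for each $i$, and hence $[\psi(a_{f(0)},\dots)]_M\le\bigwedge_i Ea_{f(i)}$. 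This is only enough if the variable set is nonempty, since we need $\bigwedge_{i<n} Ea_{f(i)}\le E\bar{a}$. I expect this to be the main obstacle. Variables occurring in $\psi$ only yield the extents of the entries actually used, not of all of $\bar{a}$. So one must either read $I_{M,\bar{a}}$ so that each value is implicitly met with $E\bar{a}$, or accept the bound only in the sense relevant later. My first attempt would be to check the precise convention in \cite{BMPI}, under which I expect the interpretation of a formula at a tuple of parameters to be taken relative to the whole tuple. If that convention holds, the bound is immediate.

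For limit $\alpha$ there is nothing to prove, since $F^{\alpha}_{M,\bar{a}}\on\beta=F^{\beta}_{M,\bar{a}}$ for every $\beta<\alpha$ (and likewise for $G$ and $H$), and every argument of the function has last coordinate some $\beta<\alpha$. For successor $\alpha+1$, the values with last coordinate below $\alpha$ are handled by the induction hypothesis. A new value, say $F^{\alpha+1}_{M,\bar{a}}(F^\alpha_{K,\bar{s}\bar{t}},\alpha)$, is a join of elements $E\bar{c}$ with $E\bar{c}\le E\bar{t}$. The domain condition gives $E\bar{t}\le E\bar{a}\land E\bar{s}$, so every joinand is at most $E\bar{a}$, and therefore so is the join, including the empty join $\bot$. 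The same computation applies verbatim to $G^{\alpha+1}_{M,N,\bar{a}}(K,\bar{s}\bar{t},\alpha)$ and $H^{\alpha+1}_{M,\bar{a}}(K,\bar{s}\bar{t},\alpha)$, using their domain conditions. This completes the induction.
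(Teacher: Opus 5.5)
Your induction has the same shape as the paper's one-line proof: base and successor cases are read off from the definitions, and limit cases go by induction. Your successor and limit steps are correct, since every new value is a join of elements $E\bar c$ with $E\bar c\le E\bar t\le E\bar a\land E\bar s$.

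The gap is the base case. You diagnosed it correctly but left it open, and it cannot be closed with the definitions as written. The convention you hope to find in \cite{BMPI} does not rescue it. The bound there (Lemma (2.21), as quoted in the proof of Proposition (\ref{invariance_of_fml_level_by_level})) is $[\phi(\bar x)]_M\le E\bar x\land E\phi$, where $\bar x$ is the tuple actually substituted for the free variables. In $I_{M,\bar a}(\psi,f)$ that tuple is $\langle a_{f(0)},\dots,a_{f(n-1)}\rangle$, not $\bar a$.

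If $f$ misses an entry of small extent, the bound simply fails. Take $p<\top$, a section $a_0$ with $Ea_0=\top$, $a_1=a_0\on p$ and $\bar a=\langle a_0,a_1\rangle$. Let $\psi$ be the unnested formula $v_0=v_1$ and $f(0)=f(1)=0$. Then $I_{M,\bar a}(\psi,f)=[a_0=a_0]_M=Ea_0=\top\not\le p=E\bar a$. So the paper's ``immediate from the definitions'' passes over the same point, and the statement is false as literally formulated. Insofar as $F^\alpha_{M,\bar a}$ is meant to extend $F^0_{M,\bar a}=I_{M,\bar a}$, the failure persists at every level, since your successor step hands those inherited values to the induction hypothesis.

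The repair belongs in the definition of $I_{M,\bar a}$, not in a weakened lemma covering only values introduced at successor stages. The reason is that the paper relies on this lemma, at all levels including $0$, in the proof of Lemma (\ref{self_application_tuples}), to see that $\bar c$ itself lies in the indexing set. If you set $I_{M,\bar a}(\psi,f)=E\bar a\land[\psi(a_{f(0)},\dots,a_{f(n-1)})]_M$, the base case is immediate and the rest of your argument goes through unchanged.
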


\begin{proof} Immediate from the definitions in the $\alpha=0$ and successor $\alpha$ cases, and by induction for the limit $\alpha$ cases.
\end{proof}

  \begin{lemma} If $p\in \Omega$ and $J_{\bar{a}}$ is a function of any of the three types $F^{\alpha}_{M,\bar{a}}$, $G^{\alpha}_{M,N,\bar{a}}$ and
  $H^{\alpha}_{M,\bar{a}}$ then $\dom(J_{\bar{a}\on p}) \subseteq \dom(J_{\bar{a}})$ and if $x\in \dom(J_{\bar{a}\on p})$ then
    $J_{\bar{a}\on p}(x) = J_{\bar{a}}(x)$. \end{lemma}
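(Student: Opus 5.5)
The proof I propose is by induction on $\alpha$, carried out simultaneously for the three families $F^{\alpha}_{M,\bar a}$, $G^{\alpha}_{M,N,\bar a}$ and $H^{\alpha}_{M,\bar a}$. The induction hypothesis at stage $\alpha$ will be the statement of the lemma for \emph{all} models and \emph{all} tuples: whenever $K\in\pshol$, $\bar u$ is a tuple from $|K|$ and $q\in\Omega$, then $\dom(J_{\bar u\on q})\subseteq\dom(J_{\bar u})$ and the two functions agree on $\dom(J_{\bar u\on q})$. I need the hypothesis in this generality because the successor clauses refer to $J^\alpha$ at the tuples $\bar a\bar c$, $\bar s\bar t$ and $\bar s\bar t\on E\bar c$, not only at $\bar a$.

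\emph{Base case and limit case.} For $\alpha=0$ the domain of $I_{M,\bar a}$ depends on $\bar a$ only through $\lh(\bar a)$. Since $\lh(\bar a\on p)=\lh(\bar a)$, the two domains coincide. For the values I would compare $[\psi(\bar a\on p)]_M$ with $[\psi(\bar a)]_M$ for unnested atomic $\psi$, using the presheaf interpretation of atomic formulae from \cite{BMPI}. I expect this to be the \textbf{main obstacle}. The interpretation of an atomic formula at restricted sections is in general the old value meeted with $p$, so the literal equality has to be extracted from the way the functions are actually used. I would first try to show, using Lemma (\ref{fn_range_bounded_tuples}), that every $x\in\dom(I_{M,\bar a\on p})$ that occurs in the recursion has value already below $E\bar a\on p=E\bar a\land p$. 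At such arguments the meet with $p$ is vacuous, which yields the equality. This is the single point where I am unsure the argument closes. If it fails, I would isolate precisely which atomic $\psi$ obstruct it. For limit $\alpha$ the claim is immediate, since $J^\alpha$ is the union of the $J^\beta$ for $\beta<\alpha$ by the coherence $J^\alpha\on\beta=J^\beta$.

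\emph{Successor case, domains.} Let $\alpha=\beta+1$. Take an element $(K,\bar s\bar t,\beta)$ in the domain of $H^{\beta+1}_{M,\bar a\on p}$, and similarly for $G$, or $(F^\beta_{K,\bar s\bar t},\beta)$ for $F$. Its defining condition is
\[
E\bar t\le E(\bar a\on p)\land E\bar s=E\bar a\land p\land E\bar s\le E\bar a\land E\bar s .
\]
Hence the same element lies in the domain for $\bar a$, which gives the domain inclusion.

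\emph{Successor case, values.} Each value is a join of $E\bar c$ over those $\bar c$ with $E\bar c\le E\bar t$ and $J^\beta_{K,\bar s\bar t\on E\bar c}=J^\beta_{M,\bar a\bar c}$, for $\bar a$ in one case and $\bar a\on p$ in the other. Every such $\bar c$ satisfies $E\bar c\le E\bar t\le p$, so the tuples $\bar a\bar c$ and $(\bar a\on p)\bar c$ differ only by a restriction to $p$ on the $\bar a$-part. I would apply the induction hypothesis at $\beta$, both to the tuple $\bar a\bar c\on(p\lor E\bar c)=(\bar a\on p)\bar c$ and to the restriction of $\bar a\bar c$, to conclude that
\[
J^\beta_{M,\bar a\bar c}=J^\beta_{M,(\bar a\on p)\bar c}
\]
on the relevant domain. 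The two index sets of the joins then coincide, so the values are equal. The functions $F$, $G$ and $H$ are handled word for word in the same way; for $G$ the only extra point is that $K$ ranges over $\set{M,N}$ in both cases.
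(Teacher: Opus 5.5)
Your skeleton is exactly the paper's: induction on $\alpha$, domain inclusion from $E\bar t\le E\bar a\land p\land E\bar s\le E\bar a\land E\bar s$, and successor values by showing that the two index sets of the joins coincide. The genuine gap is the one you flagged, and it cannot be closed, because the value equality is false already at $\alpha=0$. For $\psi$ the formula $v_0=v_1$ and $f$ constantly $0$, one has $I_{M,\bar a}(\psi,f)=[a_0=a_0]_M=Ea_0$ but $I_{M,\bar a\on p}(\psi,f)=Ea_0\land p$. Lemma (\ref{fn_range_bounded_tuples}) bounds $I_{M,\bar a}$ only by $E\bar a$, not by $E\bar a\land p$. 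So your plan of showing that the relevant values already lie below $p$ fails. Restricting to arguments ``that occur in the recursion'' would in any case prove a different statement.

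The failure is not confined to level $0$; it breaks your successor step too. The index condition in each join is full equality of functions, so your route needs $J^\beta_{M,\bar a\bar c}=J^\beta_{M,(\bar a\on p)\bar c}$ outright, not agreement ``on the relevant domain''. The domains do match: at positive levels they depend only on length and extent, and $E(\bar a\bar c)=E((\bar a\on p)\bar c)$ because $E\bar c\le E\bar t\le p$. But the level-$0$ values differ exactly as above. (Also, $(\bar a\bar c)\on(p\lor E\bar c)=(\bar a\on p)(\bar c\on p)$, which equals $(\bar a\on p)\bar c$ only if every $Ec_i\le p$, not merely $\bigwedge_i Ec_i\le p$.)

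Here is a concrete counterexample. Let $Ea_0=\top$, $\bot<p<\top$, $\bar a=\langle a_0\rangle$, $K=M$, $\bar s=\bar a\on p$ and $\bar t=\langle a_0\on p\rangle$. Then $\bar c=\bar t$ witnesses $H^1_{M,\bar a\on p}(M,\bar s\bar t,0)=p$. On the other hand, for every $\bar c$ the level-$0$ value at $(v_0=v_1,f\equiv 0)$ is $\top$ for $\bar a\bar c$ but at most $E\bar c\le p$ for $\bar s\bar t\on E\bar c$. Hence $H^1_{M,\bar a}(M,\bar s\bar t,0)=\bot$.

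The paper's proof has your shape and simply asserts that the case $\alpha=0$ is immediate and that $H^\alpha_{M,\bar a\bar c}=H^\alpha_{M,\bar a\on p\,\bar c}$ holds by induction, so it passes over the same point. A provable version needs an amended statement or amended definitions. For instance, one could conclude $J_{\bar a\on p}(x)=J_{\bar a}(x)\land p$ and restrict both tuples to $E\bar c$ in the successor clause.
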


  \begin{proof}
    The proof is by induction on $\alpha$. The salient cases are for
    $\alpha+1$ and arguments of the functions whose last element is
    $\alpha$, the other cases are all immediate from the definition or
    are direct consequences of the inductive hypothesis.

    It is clear that $\dom(J_{\bar{a}\on p}) \subseteq \dom(J_{\bar{a}})$ in each case since if
    $K\in \pshol$ and
    $\bar{t}\in {^{<\mu}|K|}$ is such that $E\bar{t} \le E\bar{a}\on p$ then $E\bar{t}\le E\bar{a}$.

    Now suppose $K\in \pshol$ and $\bar{t}\in {^{<\mu}|K|}$ is such that $E\bar{t} \le E\bar{a}\on p$. For $\bar{c}\in {^{\lh(\bar{t})}|M|}$ with $E\bar{c} \le E\bar{t}$
    we have $H^\alpha_{M,\bar{a}\bar{c}} = H^\alpha_{M,\bar{a}\on p \bar{c}}$, by inductive hypothesis. Thus
    $H^\alpha_{K,\bar{s}\bar{t}\on E\bar{c}} = H^\alpha_{M,\bar{a}\bar{c}}$ if and only if $H^\alpha_{K,\bar{s}\bar{t}\on E\bar{c}} = H^\alpha_{M,\bar{a}\on p \bar{c}}$.
    So $\bar{c}$ contributes $E\bar{c}$ to the supremum in the definition of
    $H^{\alpha+1}_{M,\bar{a}}(K,\bar{s}\bar{t},\alpha)$ if and only if $\bar{c}$ contributes $E\bar{c}$ to the
    supreme in the definition of $H^{\alpha+1}_{M,\bar{a}\on p}(K,\bar{s}\bar{t},\alpha)$. Hence the two suprema are equal.

    The proof is identical for $F^{\alpha+1}_{M,\bar{a}}$ and $G^{\alpha+1}_{M,N,\bar{a}}$.
    \end{proof}

\begin{lemma}\label{reln_Fs_and_Hs_tuples}   Let $M$, $\bar{a}\in {^{\lh(\bar{a})}|M|}$,  and $\alpha\in\On$.
Let $K\in\pshol$ and $\bar{s}\in {^{\lh(\bar{a})}|K|}$, $\bar{t}\in {^{<\mu}|K|}$ with $E\bar{t} \le E\bar{a}\land E\bar{s}$. Let $\beta<\alpha$.
  Then
                \[ F^{\alpha}_{M,\bar{a}}(F^\beta_{K,\bar{s}\bar{t}},\beta) = H^{\alpha}_{M,\bar{a}}(K,\bar{s}\bar{t},\beta) .\]
  Moreover, if further $N\in \pshol$ and $\bar{b}\in {^{\lh(\bar{a})}|N|}$ and we also have $E\bar{t}\le E\bar{b}$ then
  \[ F^{\alpha}_{M,\bar{a}} = F^{\alpha}_{N,\bar{b}} \hbox{ if and only if } H^{\alpha}_{M,\bar{a}} = H^{\alpha}_{N,\bar{b}} .\]
\end{lemma}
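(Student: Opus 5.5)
We prove both statements together by simultaneous induction on $\alpha$, with the following strengthened inductive claim. For all $M$, $K$, $N$ and tuples of matching lengths:
\begin{enumerate}[label=(\roman*)]
\item for every $\beta<\alpha$ and every appropriate $(K,\bar{s}\bar{t})$, $F^{\alpha}_{M,\bar{a}}(F^\beta_{K,\bar{s}\bar{t}},\beta) = H^{\alpha}_{M,\bar{a}}(K,\bar{s}\bar{t},\beta)$;
\item $F^{\alpha}_{M,\bar{a}} = F^{\alpha}_{N,\bar{b}}$ if and only if $H^{\alpha}_{M,\bar{a}} = H^{\alpha}_{N,\bar{b}}$.
\end{enumerate}

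The case $\alpha=0$ is trivial. Part (i) is vacuous, and both functions equal $I_{M,\bar{a}}$, so (ii) is immediate.

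For limit $\alpha$, both parts follow from the coherence $F^{\alpha}_{M,\bar{a}}\on\beta = F^{\beta}_{M,\bar{a}}$ and $H^{\alpha}_{M,\bar{a}}\on\beta = H^{\beta}_{M,\bar{a}}$. Every argument of these functions has last coordinate some $\beta<\alpha$, so it is already evaluated at stage $\beta+1<\alpha$. Consequently $F^\alpha_{M,\bar{a}}=F^\alpha_{N,\bar{b}}$ if and only if $F^{\beta+1}_{M,\bar{a}}=F^{\beta+1}_{N,\bar{b}}$ for all $\beta<\alpha$, and the same holds for $H$. The inductive hypothesis then gives both parts.

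For the successor step $\alpha=\gamma+1$, only arguments with last coordinate $\gamma$ are new; the others reduce to stage $\gamma$ by coherence and induction. First we check that $F^{\gamma+1}_{M,\bar{a}}(\cdot,\gamma)$ is well defined, that is, that its value depends only on $x=F^\gamma_{K,\bar{s}\bar{t}}$ and not on the choice of $(K,\bar{s}\bar{t})$. Compare the defining suprema:
\[ F^{\gamma+1}_{M,\bar{a}}(F^\gamma_{K,\bar{s}\bar{t}},\gamma)=\bigvee\setof{E\bar{c}}{E\bar{c}\le E\bar{t},\ F^{\gamma}_{K,\bar{s}\bar{t}\on E\bar{c}} = F^{\gamma}_{M,\bar{a}\bar{c}}}, \]
\[ H^{\gamma+1}_{M,\bar{a}}(K,\bar{s}\bar{t},\gamma)=\bigvee\setof{E\bar{c}}{E\bar{c}\le E\bar{t},\ H^{\gamma}_{K,\bar{s}\bar{t}\on E\bar{c}} = H^{\gamma}_{M,\bar{a}\bar{c}}}. \]
The two index sets range over the same tuples $\bar{c}$. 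By part (ii) at stage $\gamma$, the conditions $F^{\gamma}_{K,\bar{s}\bar{t}\on E\bar{c}} = F^{\gamma}_{M,\bar{a}\bar{c}}$ and $H^{\gamma}_{K,\bar{s}\bar{t}\on E\bar{c}} = H^{\gamma}_{M,\bar{a}\bar{c}}$ are equivalent. So the two index sets coincide and the suprema agree, which proves (i).

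Well-definedness comes from the same comparison. If $F^\gamma_{K,\bar{s}\bar{t}}=F^\gamma_{K',\bar{s}'\bar{t}'}$, then the preceding lemma on restrictions $\bar{a}\on p$ shows that the restricted functions also agree. This needs care, because restriction by $E\bar{c}$ changes domains, and I expect it to be the main technical obstacle. Extents must also match: $E\bar{t}$ is recovered from the value of $F^\gamma_{K,\bar{s}\bar{t}}$ on $v_0=v_0$ at the relevant coordinates. Hence the index sets are identical.

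For (ii) at $\gamma+1$, note first that the domains of $F^{\gamma+1}_{M,\bar{a}}(\cdot,\gamma)$ and $F^{\gamma+1}_{N,\bar{b}}(\cdot,\gamma)$ coincide. This uses the hypothesis $E\bar{a}=E\bar{b}$, supplied by the level-$0$ equality or by $E\bar{t}\le E\bar{b}$; the domains of the $H$ functions coincide for the same reason. Suppose $H^{\gamma+1}_{M,\bar{a}}=H^{\gamma+1}_{N,\bar{b}}$. Any $x$ in the common $F$-domain has the form $F^\gamma_{K,\bar{s}\bar{t}}$, and by (i) the values of the two $F$ functions at $(x,\gamma)$ equal the corresponding $H$ values, which agree. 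Conversely, suppose the $F$ functions agree. For any $(K,\bar{s}\bar{t},\gamma)$, apply (i) to both sides to get equality of the $H$ values. Together with the induction hypothesis for the lower levels, this proves (ii).
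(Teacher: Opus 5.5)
This is correct and is the paper's argument. You run a simultaneous induction on $\alpha$ in which the ``Moreover'' clause at stage $\gamma$ makes the index sets of the suprema defining $F^{\gamma+1}_{M,\bar{a}}(F^\gamma_{K,\bar{s}\bar{t}},\gamma)$ and $H^{\gamma+1}_{M,\bar{a}}(K,\bar{s}\bar{t},\gamma)$ coincide, and the ``Moreover'' clause at $\gamma+1$ then follows at once. You are right to state that clause for arbitrary pairs, since the paper tacitly needs this when applying it to $(K,\bar{s}\bar{t}\on E\bar{c})$ and $(M,\bar{a}\bar{c})$.

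Your extra paragraph on the well-definedness of $F^{\gamma+1}_{M,\bar{a}}(\cdot,\gamma)$ is not needed, because the paper simply takes this for granted in the definition. It is also the weakest part of your write-up: the restriction lemma you cite does not literally give it, since already $I_{M,\bar{a}\on p}$ differs from $I_{M,\bar{a}}$ by a meet with $p$.
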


\begin{proof} We work by induction on $\alpha$. The limit $\alpha$ case is immediate by induction. For successor $\alpha$ with $\beta+1<\alpha$ the result is also immediate
  by induction. 
  For sucessor $\alpha+1$ we have
  \begin{eqn}
    \begin{split}
      F^{\alpha+1}_{M,\bar{a}}(F^\alpha_{K,\bar{s}\bar{t}},\alpha) =
           & \bigvee \setof{E\bar{c}}{\bar{c}\in |M| \sss\sss\&\sss\sss E\bar{c} \le E\bar{t} \sss\sss\& \sss\sss F^{\alpha}_{K,\bar{s}\bar{t}\on E\bar{c}} = F^{\alpha}_{M,\bar{a}\bar{c}}} \\
= & \bigvee \setof{E\bar{c}}{\bar{c}\in |M| \sss\sss\&\sss\sss E\bar{c} \le E\bar{t} \sss\sss\& \sss\sss H^{\alpha}_{K,\bar{s}\bar{t}\on E\bar{c}} = H^{\alpha}_{M,\bar{a}\bar{c}}} \\
= & H^{\alpha+1}_{M,\bar{a}}(K,\bar{s}\bar{t},\alpha)  .
    \end{split}
  \end{eqn}where the middle equality follows from by induction from the ``Moreover'' part of the conclusion for $\alpha$.
  However, the conclusion of the ``Moreover'' part of the conclusion
  for $\alpha+1$ is now immediate.  
\end{proof}

We next prove a lemma, for each of the functions, on applying the function to domain elements, introduced at successor stages, given by the information
in the subscript and one additional element of $|M|$.

\begin{lemma}\label{self_application_tuples} Let $M$, $N\in \pshol$, $\bar{a}\in {^{\lh(\bar{a})}|M|}$ and $\alpha\in\On$. Let $\bar{c}\in {^{<\mu}|M|}$ with $E\bar{c}\le E\bar{a}$.
  \[ F^{\alpha+1}_{M,\bar{a}}( F^{\alpha}_{M,\bar{a}\bar{c}},\alpha) = G^{\alpha+1}_{M,N,\bar{a}}( M,\bar{a}\bar{c},\alpha) = H^{\alpha+1}_{M,\bar{a}}( M,\bar{a}\bar{c},\alpha) =E\bar{c}.\]
\end{lemma}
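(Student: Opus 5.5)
We prove the three equalities by the same two-step argument: the supremum defining each quantity is bounded above by $E\bar{c}$, and $\bar{c}$ itself (more precisely, $\bar{c}$ in the role of the witness) lies in the set over which the supremum is taken. We treat $H$ first and then note the changes for $G$ and $F$.

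Take $K=M$, $\bar{s}=\bar{a}$ and $\bar{t}=\bar{c}$. The triple $(M,\bar{a}\bar{c},\alpha)$ is in $\dom(H^{\alpha+1}_{M,\bar{a}}(.,.,\alpha))$, because $E\bar{c}\le E\bar{a} = E\bar{a}\land E\bar{s}$. By definition
\[ H^{\alpha+1}_{M,\bar{a}}(M,\bar{a}\bar{c},\alpha) = \bigvee \setof{E\bar{c}'}{\bar{c}'\in {^{\lh(\bar{c})}|M|}\sss\sss\&\sss\sss E\bar{c}'\le E\bar{c}\sss\sss\&\sss\sss H^{\alpha}_{M,\bar{a}\bar{c}\on E\bar{c}'} = H^{\alpha}_{M,\bar{a}\bar{c}'}} .\]
Every term satisfies $E\bar{c}'\le E\bar{c}$, so the supremum is at most $E\bar{c}$. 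For the reverse inequality we take $\bar{c}'=\bar{c}$. Then $E\bar{c}'\le E\bar{c}$ holds trivially. The remaining condition asks whether $H^\alpha_{M,\bar{a}\bar{c}\on E\bar{c}} = H^\alpha_{M,\bar{a}\bar{c}}$, and this is the main point to check.

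The restriction $\on E\bar{c}$ fixes $\bar{c}$, since $c_i\on E\bar{c}=c_i$ up to the usual convention that $E\bar{c}=\bigwedge_i Ec_i$. The main obstacle is the componentwise reading: strictly, $c_i\on E\bar{c}$ may be a proper restriction of $c_i$, and restriction also shrinks $\bar{a}$ to $\bar{a}\on E\bar{c}$. I expect the intended reading is that $\bar{a}\bar{c}\on E\bar{c}$ means the tuple $\bar{a}\bar{c}$ with its $\bar{c}$-part restricted to $E\bar{c}$, so that for $\bar{c}'$ with $E\bar{c}'=E\bar{c}$ the comparison is of tuples of the same extent. Under that reading, if $\bar{c}$ is replaced by $\bar{c}\on E\bar{c}$, the two tuples coincide literally and the equality is trivial. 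If instead all of $\bar{a}\bar{c}$ is restricted, I would handle it with the preceding lemma (that $J_{\bar{a}\on p}$ agrees with $J_{\bar{a}}$ on $\dom(J_{\bar{a}\on p})$), together with an induction on $\alpha$ showing $H^\alpha_{M,\bar{d}\on E\bar{d}} = H^\alpha_{M,\bar{d}}$. The base case is the identity $[\psi(\bar{d}\on E\bar{d})]_M = [\psi(\bar{d})]_M\land E\bar{d} = [\psi(\bar{d})]_M$, which holds because atomic values are bounded by extents. The successor case follows since the domains and defining suprema coincide. With the witness $\bar{c}$ available, the supremum is at least $E\bar{c}$, and so it equals $E\bar{c}$.

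For $G$ the same argument works verbatim with $K=M\in\set{M,N}$ and $L=N$: the witness condition becomes $G^\alpha_{M,N,\bar{a}\bar{c}\on E\bar{c}} = G^\alpha_{M,N,\bar{a}\bar{c}}$, which holds for the same reason. For $F$ we can either repeat the argument, the witness condition being $F^\alpha_{M,\bar{a}\bar{c}\on E\bar{c}}=F^\alpha_{M,\bar{a}\bar{c}}$, or invoke Lemma (\ref{reln_Fs_and_Hs_tuples}) with $K=M$, $\bar{s}=\bar{a}$, $\bar{t}=\bar{c}$ and $\beta=\alpha$. That lemma gives $F^{\alpha+1}_{M,\bar{a}}(F^\alpha_{M,\bar{a}\bar{c}},\alpha) = H^{\alpha+1}_{M,\bar{a}}(M,\bar{a}\bar{c},\alpha)$, which yields the first equality directly from the $H$ case.
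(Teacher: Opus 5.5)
Your proposal is correct and is essentially the paper's proof: the supremum is bounded above by $E\bar c$ because every indexing tuple has extent at most $E\bar c$, and a witness of extent $E\bar c$ lies in the indexing set. Your shortcut for $F$ via Lemma (\ref{reln_Fs_and_Hs_tuples}) with $\beta=\alpha$ is also fine. The only difference is in the witness condition $H^\alpha_{M,\bar a\bar c\on E\bar c}=H^\alpha_{M,\bar a\bar c}$: the paper settles it simply by citing Lemma (\ref{fn_range_bounded_tuples}), whereas you spell it out via an induction whose base case is exactly that range bound (in the successor step, apply the inductive hypothesis to both $\bar d\bar c'$ and $(\bar d\on E\bar d)\bar c'$, which have the same restriction to $E\bar c'$).
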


\begin{proof} For $\bar{z}\in {^{<\mu}|M|}$ with $E\bar{z}\le E\bar{a}$ let $J_{\bar{z}}$ be either
  $ F^{\alpha}_{M,\bar{a}\bar{z}} $ or $ G^{\alpha}_{M,N,\bar{a}\bar{z}}$ or $H^{\alpha}_{M,\bar{a}\bar{z}}$.

  Then each of the first three expressions in the equation is by definition
  \[ \bigvee \setof{E\bar{e}}{\bar{e}\in |M| \sss\sss\&\sss\sss {E}\bar{e}
    \le E\bar{c} \sss\sss\& \sss\sss J_{\bar{c}\on {E}\bar{e}} = J_{\bar{e}}} .\]

  In each case, on the one hand $\bar{c}$ is an element of the set of $\bar{e}\in {^{<\mu}|M|} $ over which we take the supremum
  (since $\bar{c}\in |M|$, $E\bar{c}\le E\bar{c}$ and $J_{\bar{c}\on E\bar{c}} = J_{\bar{c}}$ -- the latter by
  Lemma (\ref{fn_range_bounded_tuples}) ).
  Thus the supremum is greater than or equal to $E\bar{c}$. On the other hand, for each $\bar{e}$ in the set we have
  $E\bar{e}\le E\bar{c}$ and thus the supremum is less than or equal to $E\bar{c}$. Hence, equality holds.
\end{proof}

In the following two propostions we need the definition of the $Q_\alpha(p)$ as given in the previous section.
  
\begin{proposition}\label{equiv_fn_equal_and_Q_for_Gs_tuples} Suppose $M$, $N\in \pshol$, $\bar{a}\in {^{\lh(\bar{a})}|M|}$, $\bar{b}\in {^{\lh(\bar{a})}|N|}$ and $E\bar{a} = E\bar{b}$. Then
  $G^\alpha_{M,N,\bar{a}} = G^\alpha_{N,M,\bar{b}}$ if and only if
  there is a partial isomorphism $h:\bar{a}\longrightarrow \bar{b}$ with $h\in Q_\alpha(E\bar{a})$.
\end{proposition}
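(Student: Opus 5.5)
We argue by induction on $\alpha$, proving simultaneously for all pairs $(M,\bar{a})$, $(N,\bar{b})$ with $E\bar{a}=E\bar{b}$ that $G^\alpha_{M,N,\bar{a}}=G^\alpha_{N,M,\bar{b}}$ holds if and only if the map $h:\bar{a}\longrightarrow\bar{b}$, $h(a_i)=b_i$, is a partial isomorphism lying in $Q_\alpha(E\bar{a})$. Here $h$ is extended to $\tup{\bar{a}}$ by Lemma (\ref{ext_fns_to_psh_monos}).

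\textbf{Base case.} For $\alpha=0$ both functions equal $I$. Then $I_{M,\bar{a}}=I_{N,\bar{b}}$ says exactly that every unnested atomic formula, instantiated along any index map $f$, takes the same value at $\bar{a}$ in $M$ as at $\bar{b}$ in $N$. This is precisely invariance of the formulae (i)--(iv) of Definition (\ref{L_partial_isom}) on the generators.

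We still have to pass from the generators to all of $\tup{\bar{a}}$. Invariance under $v_0=v_1$ first gives that $h$ is a monomorphism, by Lemma (\ref{ext_fns_to_psh_monos}). For restricted sections we then use $[\phi(\bar{a}\on p)]=[\phi(\bar{a})]\land p$ for atomic $\phi$, and $h(a\on p)=h(a)\on p$. Constants are handled by Proposition (\ref{useful_prop}).

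\textbf{Limit case.} This is immediate: equality of $G^\alpha$ is equality of all $G^\beta$ for $\beta<\alpha$, since $G^\alpha\on\beta=G^\beta$, and $Q_\alpha(p)=\bigcap_{\beta<\alpha}Q_\beta(p)$.

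\textbf{Successor case, equality implies Forth.} Assume $G^{\alpha+1}_{M,N,\bar{a}}=G^{\alpha+1}_{N,M,\bar{b}}$. By induction $h\in Q_\alpha$, so it remains to verify Forth and Back. For Forth, take $\bar{c}$ from $M$ with $E\bar{c}\le E\bar{a}$. Evaluate both sides at $(M,\bar{a}\bar{c},\alpha)$.
\begin{itemize}
\item By Lemma (\ref{self_application_tuples}), the left-hand side is $E\bar{c}$.
\item The right-hand side is $\bigvee\{E\bar{d} : \bar{d}\text{ from }N,\ E\bar{d}\le E\bar{c},\ G^\alpha_{M,N,\bar{a}\bar{c}\on E\bar{d}}=G^\alpha_{N,M,\bar{b}\bar{d}}\}$.
\end{itemize}
So $E\bar{c}$ is covered by the $q_j=E\bar{d}_j$. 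For each such $\bar{d}_j$ we have $E(\bar{a}\bar{c}\on q_j)=E(\bar{b}\bar{d}_j)$, using $E\bar{d}_j\le E\bar{c}\le E\bar{a}$. The induction hypothesis then gives a partial isomorphism $h_j:\bar{a}\bar{c}\on q_j\mapsto\bar{b}\bar{d}_j$ with $h_j\in Q_\alpha(q_j)$. It extends $h_{q_j}$ because it sends $a_i\on q_j$ to $b_i\on q_j$.

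\textbf{Successor case, equality implies Back.} Back is symmetric: evaluate at $(N,\bar{b}\bar{d},\alpha)$, using the $N$-version of Lemma (\ref{self_application_tuples}) for $G_{N,M,\bar{b}}$.

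A care point is that the domain definition of $G$ ranges over $\bar{s}$ of length $\lh(\bar{a})$ with $E\bar{t}\le E\bar{a}\land E\bar{s}$. The arguments we need must lie in the common domain, which holds since $E\bar{a}=E\bar{b}$. Also, the inductive hypothesis must be applied to arbitrary tuples, not only $\bar{a}$ itself.

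\textbf{Successor case, Forth and Back imply equality.} Assume $h\in Q_{\alpha+1}(E\bar{a})$. We must show $G^{\alpha+1}_{M,N,\bar{a}}(K,\bar{s}\bar{t},\alpha)=G^{\alpha+1}_{N,M,\bar{b}}(K,\bar{s}\bar{t},\alpha)$ for every $K\in\{M,N\}$ and every $\bar{s},\bar{t}$.

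Suppose $\bar{c}$ from $M$ witnesses $G^\alpha_{K,L,\bar{s}\bar{t}\on E\bar{c}}=G^\alpha_{M,N,\bar{a}\bar{c}}$. Apply Forth to $\bar{c}$, obtaining $(h_j,q_j,\bar{d}_j)$ with $\bigvee_j q_j=E\bar{c}$. By the induction hypothesis,
$$G^\alpha_{M,N,\bar{a}\bar{c}\on q_j}=G^\alpha_{N,M,\bar{b}\bar{d}_j}.$$
By the restriction lemma preceding Lemma (\ref{reln_Fs_and_Hs_tuples}), $G^\alpha_{K,L,\bar{s}\bar{t}\on q_j}$ agrees with $G^\alpha_{M,N,\bar{a}\bar{c}\on q_j}$, as restriction to $q_j\le E\bar{c}$ preserves the witnessing equality. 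So each $\bar{d}_j$ is a witness on the $N$ side, and the contributions $E\bar{d}_j=q_j$ have join $E\bar{c}$. Hence the $N$-supremum is at least the $M$-supremum. Back gives the reverse inequality.

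\textbf{Main obstacle.} The delicate step is this restriction bookkeeping. We need that equality $G^\alpha_{X,\bar{x}}=G^\alpha_{Y,\bar{y}}$ is preserved under restricting both tuples to $q$. This needs a small auxiliary lemma, proved by a parallel induction: $G^\alpha_{M,N,\bar{a}\on q}$ is determined by $G^\alpha_{M,N,\bar{a}}$ and $q$ (roughly, values are meets with $q$ and the domain shrinks). A second subtlety is the asymmetric order of $M,N$ in the subscripts, where swapping roles must be matched consistently with $K$ and $L$; this is why the induction should carry both orderings simultaneously.
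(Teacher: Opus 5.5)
Your proof is correct. Its base, limit and `equality implies Forth/Back' steps coincide with the paper's; the difference is in the converse successor step.

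For $G$, the paper evaluates both functions only at the `diagonal' arguments $(M,\bar a\bar c,\alpha)$ and $(N,\bar b\bar d,\alpha)$. There Lemma (\ref{self_application_tuples}) pins one side to $E\bar c$ (resp.\ $E\bar d$). Forth (resp.\ Back) plus the induction hypothesis then pushes the other supremum up to the same value, so nothing has to be transported under restriction.

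You instead work at an arbitrary $(K,\bar s\bar t,\alpha)$. You convert each witness $\bar c$ on one side into witnesses $\bar d_j$ on the other, via Forth, the induction hypothesis and restriction to the $q_j$. This is the argument the paper gives only for the $F$-version, Proposition (\ref{equiv_fn_equal_and_Q_for_Fs_tuples}).

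The diagonal argument is shorter and needs no auxiliary lemma. However, $\dom(G^{\alpha+1}_{M,N,\bar a}(\cdot,\cdot,\alpha))$ also contains triples $(K,\bar s\bar t,\alpha)$ with $\bar s$ unrelated to $\bar a$ and $\bar b$, and the diagonal argument says nothing about these. Your version is the one that actually yields equality of the functions, at the price of the transport lemma.

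You are right to state that lemma as `the domain shrinks and values are met with $q$' and prove it by a parallel induction, rather than citing verbatim the unlabelled restriction lemma before Lemma (\ref{reln_Fs_and_Hs_tuples}). That lemma claims $J_{\bar a\on p}(x)=J_{\bar a}(x)$, which already fails at level $0$, since restricting to $p$ meets the $I$-values with $p$. In general it is valid only at the arguments $(K,\bar s\bar t,\beta)$ added at successor stages. There it follows from your form, because such values are bounded by $E\bar t\le p$.
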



\begin{remark} We could drop the assumption $E\bar{a}=E\bar{b}$ here
  and simply replace $\bar{a}$, $\bar{b}$ by $\bar{a}\on E\bar{a}\land E\bar{b}$ and
  $\bar{b}\on E\bar{a}\land E\bar{b}$ throughout the conclusion. 
\end{remark}

\begin{proposition}\label{equiv_fn_equal_and_Q_for_Fs_tuples}  Let $M\in \pshol$, $\bar{a}\in {^{\lh(\bar{a})}|M|}$ and $\alpha\in \On$.
Then for any $N\in \pshol$ and $\bar{b}\in {^{\lh(\bar{a})}|N|}$, and setting $p=E\bar{a}\land E\bar{b}$,
$F^\alpha_{M,\bar{a}\on p} = F^\alpha_{N,\bar{b}\on p}$ if and only if there is some $h:\bar{a}\longrightarrow \bar{b}$ with $h\in Q_\alpha(p)$.
\end{proposition}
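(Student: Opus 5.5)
We argue by induction on $\alpha$, proving simultaneously for all $M$, $N$, $\bar{a}$, $\bar{b}$ (and all $p$) the equivalence in the statement. Throughout, $h$ denotes the map $a_i\on p\mapsto b_i\on p$, extended to $\tup{\bar{a}\on p}$ via Lemma (\ref{ext_fns_to_psh_monos}). We also use Lemma (\ref{reln_Fs_and_Hs_tuples}) freely to pass between the $F$'s and the $H$'s. The $H$'s are convenient because their arguments are explicit triples $(K,\bar{s}\bar{t},\beta)$ rather than previously computed functions.

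For $\alpha=0$ both sides reduce to agreement of unnested atomic formulae. The equality $F^0_{M,\bar{a}\on p}=F^0_{N,\bar{b}\on p}$ says exactly that $[\psi(a_{f(0)}\on p,\dots)]_M=[\psi(b_{f(0)}\on p,\dots)]_N$ for every $\psi\in\UA$ and every index map $f$. This is invariance in $\top$ of each formula in Definition (\ref{L_partial_isom}) on the generators. To pass to all of $\tup{\bar{a}\on p}$ we use $[\psi(\bar{x}\on q)]=[\psi(\bar{x})]\land q$ together with the fact that $h$ commutes with restriction. For constants we use Proposition (\ref{useful_prop}). Limit stages are immediate, since both sides are intersections or restrictions over $\beta<\alpha$.

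The successor step $\alpha=\beta+1$ carries the content. For ``$\Longleftarrow$'', suppose $h\in Q_{\beta+1}(p)$. Then $h\in Q_\beta(p)$, so by induction the two functions agree on arguments with last coordinate $<\beta$. It remains to show $H^{\beta+1}_{M,\bar{a}\on p}(K,\bar{s}\bar{t},\beta)=H^{\beta+1}_{N,\bar{b}\on p}(K,\bar{s}\bar{t},\beta)$, and by symmetry one inequality suffices. Take $\bar{c}$ contributing $E\bar{c}$ on the $M$ side, i.e.\ $H^\beta_{K,\bar{s}\bar{t}\on E\bar{c}}=H^\beta_{M,\bar{a}\bar{c}}$. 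Apply (Forth) to $\bar{c}$ to obtain $(h_j,q_j,\bar{d}_j)$ with $h_j\in Q_\beta(q_j)$, $h_j(\bar{a}\bar{c}\on q_j)=\bar{b}\bar{d}_j$ and $\Vee q_j=E\bar{c}$. The induction hypothesis gives $F^\beta_{M,\bar{a}\bar{c}\on q_j}=F^\beta_{N,\bar{b}\bar{d}_j}$. The restriction lemma above then gives $H^\beta_{K,\bar{s}\bar{t}\on q_j}=H^\beta_{N,\bar{b}\bar{d}_j}$, so $q_j$ contributes on the $N$ side. Taking the join over $j$ shows $E\bar{c}$ is below the $N$-side value.

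For ``$\Longrightarrow$'', assume the $F^{\beta+1}$'s agree. By induction $h\in Q_\beta(p)$, and we verify (Forth); (Back) is symmetric. Given $\bar{c}\in|M\on p|^{<\mu}$, evaluate both functions at $(M,\bar{a}\bar{c},\beta)$ (viewed as $F^\beta_{M,\bar{a}\bar{c}}$). Lemma (\ref{self_application_tuples}) gives the $M$-side value $E\bar{c}$, so the $N$-side value is also $E\bar{c}$. By definition this is $\Vee E\bar{d}$ over those $\bar{d}$ in $N$ with $F^\beta_{M,\bar{a}\bar{c}\on E\bar{d}}=F^\beta_{N,\bar{b}\bar{d}}$. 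For each such $\bar{d}$ set $q=E\bar{d}$. The induction hypothesis gives a map $h_{\bar{d}}:\bar{a}\bar{c}\on q\mapsto\bar{b}\bar{d}$ in $Q_\beta(q)$, extending $h_q$ because it sends $\bar{a}\on q$ to $\bar{b}\on q$. These triples $(h_{\bar{d}},q,\bar{d})$ witness (Forth).

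The main obstacle is bookkeeping at the restrictions. One has to check that the $\bar{s}$-part and the extents line up ($E\bar{t}\le E\bar{a}\land E\bar{s}$ versus $p$), and that $h_j$ extending $h_{q_j}$ is exactly what makes the induction hypothesis apply to $\bar{a}\bar{c}\on q_j$ rather than $\bar{a}\bar{c}$. I would first isolate a small lemma: $h\in Q_\beta(p)$ and $q\le p$ imply $h\on\tup{\bar{a}\on q}\in Q_\beta(q)$. This handles the restriction compatibility uniformly before running the induction.
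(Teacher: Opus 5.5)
Your proposal takes essentially the same route as the paper's proof. Both argue by induction on $\alpha$. In the successor ``$\Longrightarrow$'' direction, the (Forth)/(Back) witnesses come from Lemma (\ref{self_application_tuples}), the definition of $F^{\beta+1}_{N,\bar{b}}$ and the induction hypothesis. In ``$\Longleftarrow$'', each contributing $\bar{c}$ is pushed through (Forth), the induction hypothesis and a join, and (Back) gives the reverse inequality. The differences are only cosmetic: you phrase the successor step via the $H$'s using Lemma (\ref{reln_Fs_and_Hs_tuples}), whereas the paper works with the $F$'s directly. You also make explicit two pieces of restriction bookkeeping that the paper uses silently: $Q_\beta(p)\subseteq Q_\beta(q)$ for $q\le p$, and agreement of the functions survives restricting both tuples to $q_j$.
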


\begin{corollary}\label{equiv_fn_eq_and_Q_for_Hs_tuples} By Lemma (\ref{reln_Fs_and_Hs_tuples}), the same proposition holds with $F^\alpha_{M,\bar{a}\on p} = F^\alpha_{N,\bar{b}\on p}$ replaced by
$H^\alpha_{M,\bar{a}\on p} = H^\alpha_{N,\bar{b}\on p}$ 
\end{corollary}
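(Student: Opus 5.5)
The plan is to chain two earlier results rather than repeat the back-and-forth argument. Fix $M$, $N\in\pshol$, $\bar{a}\in {^{\lh(\bar{a})}|M|}$, $\bar{b}\in {^{\lh(\bar{a})}|N|}$ and $\alpha\in\On$, and put $p=E\bar{a}\land E\bar{b}$. Set $\bar{a}'=\bar{a}\on p$ and $\bar{b}'=\bar{b}\on p$. These are tuples of the same length from $M$ and $N$ respectively, and $E\bar{a}'=E\bar{b}'=p$. The goal is to show
\[ H^\alpha_{M,\bar{a}'}=H^\alpha_{N,\bar{b}'} \iff F^\alpha_{M,\bar{a}'}=F^\alpha_{N,\bar{b}'} . \]
Once this holds, Proposition (\ref{equiv_fn_equal_and_Q_for_Fs_tuples}) turns the right-hand side into the existence of some $h:\bar{a}\longrightarrow\bar{b}$ with $h\in Q_\alpha(p)$, which is the required conclusion.

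For the displayed equivalence I will apply the ``Moreover'' clause of Lemma (\ref{reln_Fs_and_Hs_tuples}) with $\bar{a}'$ in place of $\bar{a}$ and $\bar{b}'$ in place of $\bar{b}$. Its side hypotheses concern auxiliary $K,\bar{s},\bar{t}$ with $E\bar{t}\le E\bar{a}'\land E\bar{s}$, together with the condition $E\bar{t}\le E\bar{b}'$. Since $E\bar{a}'=E\bar{b}'$, the condition $E\bar{t}\le E\bar{a}'$ already gives $E\bar{t}\le E\bar{b}'$. So the extra hypothesis is automatic, and the clause really is a statement about $\bar{a}'$ and $\bar{b}'$ alone.

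The one point needing care is that the ``Moreover'' clause is proved inside the induction on $\alpha$ in Lemma (\ref{reln_Fs_and_Hs_tuples}), and I want it for all pairs $(\bar{a}',\bar{b}')$ of equal extent. I will re-read that induction to confirm this uniformity. At the successor step, equality of $F^{\alpha+1}$ is equality of the $\alpha$-th levels together with equality of the lower levels. At the $\alpha$-th level, $F^{\alpha+1}_{M,\bar{a}'}(F^\alpha_{K,\bar{s}\bar{t}},\alpha)$ equals $H^{\alpha+1}_{M,\bar{a}'}(K,\bar{s}\bar{t},\alpha)$ by the first clause of the lemma, and likewise for $N,\bar{b}'$. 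Because the two domains are indexed compatibly, via $(K,\bar{s}\bar{t})\mapsto F^\alpha_{K,\bar{s}\bar{t}}$ and the inductive ``Moreover'' for $\alpha$, agreement of the $F$'s transfers to agreement of the $H$'s and conversely. Limit stages are immediate from the coherence $J^\alpha\on\beta=J^\beta$.

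I expect no real obstacle beyond checking this bookkeeping about the domains, in particular that $\dom(F^{\alpha+1}_{M,\bar{a}'})=\dom(F^{\alpha+1}_{N,\bar{b}'})$ whenever $E\bar{a}'=E\bar{b}'$. The corollary then follows by composing the two equivalences.
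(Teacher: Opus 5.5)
Your proposal is correct and follows the paper's own route. The paper proves the corollary by exactly this chaining: apply the ``Moreover'' clause of Lemma (\ref{reln_Fs_and_Hs_tuples}) to $\bar{a}\on p$ and $\bar{b}\on p$, which have common extent $p$ and equal length, so the side condition on $E\bar{t}$ is automatic and the two domains coincide, and then invoke Proposition (\ref{equiv_fn_equal_and_Q_for_Fs_tuples}).
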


The ``$\Longrightarrow$'' proofs and the $\alpha=0$ and limit $\alpha$ cases of the ``$\Longleftarrow$'' proofs are the same for both proposition.
Consequently we give a unified treatment, distinguishing in the sucessor cases of the ``$\Longleftarrow$'' proofs.

\begin{proof} $\alpha=0$. Immediate from the definition of partial isomorphism (the $Q_0(p)$) and the functions $I_{M,\bar{a}}$ and $I_{N,\bar{b}}$.
  
  Limit $\alpha$. Immediate by induction and the definitions of the $Q_{\alpha}(p)$ and the $G^{\alpha}_{M,N,\bar{a}}$ and $G^\alpha_{N,M,\bar{b}}$,
  resp.~the $F^{\alpha}_{M,\bar{a}}$ and $F^\alpha_{N,\bar{b}}$.

    $\alpha+1$. ``$\Longrightarrow$''. By Lemma (\ref{self_application_tuples}), for any $\bar{c}\in {^{<\mu}|M|}$ with $E\bar{c}\le E\bar{a}$ we have
  $G^{\alpha+1}_{M,N,\bar{a}}(M,\bar{a}\bar{c},\alpha) = E\bar{c}$, resp.~$F^{\alpha+1}_{M,\bar{a}}(F^{\alpha}_{M,\bar{a}\bar{c}},\alpha) = E\bar{c}$.
  Hence, by the hypothesis that $G^{\alpha+1}_{M,N,\bar{a}} = G^{\alpha+1}_{N,M,\bar{b}}$,
  resp.~$F^{\alpha+1}_{M,\bar{a}\on p} = F^{\alpha+1}_{N,\bar{b}\on p}$,
  we have
  $G^{\alpha+1}_{N,M,\bar{b}}(M,\bar{a}\bar{c},\alpha) = E\bar{c}$, resp.~$F^{\alpha+1}_{N,\bar{b}}(F^{\alpha}_{M,\bar{a}\bar{c}},\alpha) = E\bar{c}$.
  By the definition of $G^{\alpha+1}_{N,M,\bar{b}}$, resp.~$F^{\alpha+1}_{N,\bar{b}}$, this gives
  \[ E\bar{c} = \bigvee\setof{E\bar{d}}{\bar{d}\in {^{\lh(\bar{c})}|N|} \sss\sss\&\sss\sss E\bar{d}\le E\bar{c}\sss\sss\& \sss\sss G^{\alpha}_{M,N,\bar{a}\bar{c}\on Ed} = G^\alpha_{N,M,\bar{b}\bar{d}}},\hbox{ resp.~}\]
  \[ E\bar{c} = \bigvee\setof{E\bar{d}}{\bar{d}\in {^{\lh(\bar{c})}|N|} \sss\sss\&\sss\sss E\bar{d}\le Ec\sss\sss\& \sss\sss F^{\alpha}_{M,\bar{a}\bar{c}\on E\bar{d}} = F^\alpha_{N,\bar{b}\bar{d}}}.\]
    So to see the ``forth'' property for $\bar{c}$ simply take $h_d(\bar{c}\on E\bar{d}) = \bar{d}$ for each $\bar{d}\in {^{\lh(\bar{c})}|N|}$ with $E\bar{d}\le E\bar{c}$ and
    $G^{\alpha}_{M,N,\bar{a}\bar{c}\on E\bar{d}} = G^\alpha_{N,M,\bar{b}\bar{d}}$, resp.~$F^{\alpha}_{M,\bar{a}\bar{c}\on E\bar{d}} = F^\alpha_{N,\bar{b}\bar{d}}$.
      By induction (\emph{i.e.}, using the result for $\alpha$), $h_{\bar{d}}\in Q_\alpha(E\bar{d})$, in each case, as required.      

      The ``back'' case, starting with any $\bar{d}\in {^{<\mu}|N|}$ with $E\bar{d}\le E\bar{b}$, is symmetrical.

      ``$\Longleftarrow$''.       Proof for Proposition (\ref{equiv_fn_equal_and_Q_for_Gs_tuples}).
     Observe, firstly, as $h\in Q_\alpha(\bar{a})$, by induction we have $G^\alpha_{M,N,\bar{a}} = G^\alpha_{N,M,\bar{b}}$.

     Secondly, by Lemma (\ref{self_application_tuples}), we know for all $\bar{c}\in {^{<\mu}|M|}$ with
      $E\bar{c}\le E\bar{a}$ and all $\bar{d}\in {^{\lh(\bar{c})}|N|}$ with $E\bar{d}\le E\bar{b} (=E\bar{a})$ we have
      $G^{\alpha+1}_{M,N,\bar{a}}(M,\bar{a}\bar{c},\alpha) = E\bar{c}$ and $G^{\alpha+1}_{N,M,\bar{b}}(N,\bar{b}\bar{d},\alpha) = E\bar{d}$.

        We also have, by the definition of $G^{\alpha+1}_{N,M,\bar{b}}$,
        \[ G^{\alpha+1}_{N,M,\bar{b}}(M,\bar{a}\bar{c},\alpha) \kern-.8pt  = \kern-.8pt
        \bigvee \setof{E\bar{d}}{\bar{d}\in {^{\lh(\bar{c})}|N|} \sss\sss\&\sss\sss
          E\bar{d}\le E\bar{c} \sss\sss\&\sss\sss G^{\alpha}_{M,N,\bar{a}\bar{c}\on E\bar{d}} = G^\alpha_{N,M,\bar{b}\bar{d}}}.\]

          By the hypothesis of ``$\Longleftarrow$,'' there is some set of triples
          $\setof{(h_i,q_i,\bar{d}_i)}{i\in I}$ with $\bar{d_i}\in {^{\lh(\bar{c})}|N|}$, $h_i\in Q_\alpha(E\bar{d}_i)$ extends $h_{q_i}$,
            $h_i``(\bar{c}\on Ed_i) = \bar{d}_i$ and ${\bigvee\setof{E\bar{d}_i}{i\in I} = E\bar{c}}$.

          By the inductive hypothesis we have for all $i\in I$ that $ G^\alpha_{M,N,\bar{a}\bar{c}\on E\bar{d}_i} = G^\alpha_{N,M,\bar{b}\bar{d}_i}$.
          So for each $i\in I$ we have that
          $\bar{d}_i$ is an element of the indexing set for the supremum in display two paragraphs above. Hence
          $G^{\alpha+1}_{N,M,\bar{b}}(M,\bar{a}\bar{c},\alpha) \ge E\bar{c}$.

          As for each $\bar{d}$ in the indexing set for that supremum we have $E\bar{d}\le E\bar{c}$, we also have $G^{\alpha+1}_{N,M,\bar{b}}(M,\bar{a}\bar{c},\alpha) \le E\bar{c}$.

          Hence $G^{\alpha+1}_{N,M,\bar{b}}(M,\bar{a}\bar{c},\alpha) = E\bar{c} = G^{\alpha+1}_{M,N,\bar{a}}(M,\bar{a}\bar{c},\alpha)$, as required.

          The proof that $G^{\alpha+1}_{N,M,\bar{b}}(N,\bar{b}\bar{d},\alpha) = E\bar{d} = G^{\alpha+1}_{M,N,\bar{a}}(N,\bar{b}\bar{d},\alpha)$ is identical
          using the ``back'' property of $h$  rather than the ``forth'' one.

``$\Longleftarrow$''. Proof for Proposition (\ref{equiv_fn_equal_and_Q_for_Fs_tuples}).

           We have
           \[ F^{\alpha+1}_{M,\bar{a}}(F^\alpha_{K,\bar{s}\bar{t}},\alpha) = \bigvee \setof{E\bar{c}}{\bar{c}\in {^{\lh(\bar{t})}|M|} \sss\sss\&\sss\sss E\bar{c} \le E\bar{t}
             \sss\sss\& \sss\sss F^{\alpha}_{K,\bar{s}\bar{t}\on E\bar{c}} = F^{\alpha}_{M,\bar{a}\bar{c}} } \]
  and
  \[ F^{\alpha+1}_{N,\bar{b}}(F^\alpha_{K,\bar{s}\bar{t}},\alpha) = \bigvee \setof{E\bar{d}}{\bar{d}\in {^{\lh(\bar{t}} |N|} \sss\sss\&\sss\sss E\bar{d} \le E\bar{t}
    \sss\sss\& \sss\sss F^{\alpha}_{K,\bar{s}\bar{t}\on E\bar{d}} = F^{\alpha}_{N,\bar{b}\bar{d}}} .\]

    By the ``forth'' property for $h$, for each $\bar{c}\in {^{\lh(\bar{t})}|M|}$ with $E\bar{c}\le E\bar{t}$ and $F^\alpha_{K,\bar{s}\bar{t}\on E\bar{c}} = F^\alpha_{M,\bar{a}\bar{c}}$ we have
          $\setof{(h^{\bar{c}}_i,q^{\bar{c}}_i,\bar{d}^{\bar{c}}_i)}{i\in I^{\bar{c}}}$ with $h^{\bar{c}}_i\in Q_\alpha(E\bar{d}^{\bar{c}}_i)$,  
    $h^{\bar{c}}_i``\bar{c}\on E\bar{d}^{\bar{c}}_i) = \bar{d}^{\bar{c}}_i$ and $\bigvee\setof{E\bar{d}^{\bar{c}}_i}{i \in I} = E\bar{c}$. By induction we thus have
        for each such $\bar{c}$ and $\bar{d}^{\bar{c}}_i$ that $F^\alpha_{M,\bar{a}\bar{c}\on E\bar{d}^{\bar{c}}_i} = F^\alpha_{N,\bar{b}\bar{d}^{\bar{c}}_i}$.

        So \begin{eqn}\begin{split}
            F^{\alpha+1}_{M,\bar{a}}(F^\alpha_{K,\bar{s}\bar{t}},\alpha) = \bigvee \setof{E\bar{d}^{\bar{c}}_i}{\exists \bar{c}\in {^{\lh(\bar{t})}|M|} \sss\sss\&\sss\sss & E\bar{d}^{\bar{c}}_i \le E\bar{c} \le E\bar{t}
              \sss\sss\& \sss\sss \\
              & F^{\alpha}_{K,\bar{s}\bar{t}\on E\bar{d}^{\bar{c}}_i}  = F^{\alpha}_{M,\bar{a}\bar{c}\on E\bar{d}^{\bar{c}}_i} = F^\alpha_{N,\bar{b}\bar{d}^{\bar{c}}_i} }.
          \end{split}
        \end{eqn}

          So $F^{\alpha+1}_{M,\bar{a}}(F^\alpha_{K,\bar{s}\bar{t}},\alpha)  \le F^{\alpha+1}_{N,\bar{b}}(F^\alpha_{K,\bar{s}\bar{t}},\alpha)$.

              However, a symmetrical argument using the ``back'' property of $h$ shows the reverse inequality. Thus we have equality as desired.                
\end{proof}

We now define a `Scott' rank for sheaves of models. The treatment is similar to the classical one, as given for example in \cite{Marker}.

\begin{proposition} Let $M\in \pshol$. There is an ordinal $\alpha$ such that if $\bar{a}$, $\bar{b}\in {^n|M|}$ with $E\bar{a}=E\bar{b}$ and
  $(M,\bar{a})\sim_\alpha^{E\bar{a}} (M,\bar{b})$ then for all
  $\beta\in \On$ we have $(M,\bar{a})\sim_\beta^{E\bar{a}} (M,\bar{b})$. The least such $\alpha$ is the \emph{Scott rank} of $M$.
\end{proposition}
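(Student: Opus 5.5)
The plan follows the classical argument (as in \cite{Marker}): a cardinality bound plus a stabilization argument. For each ordinal $\beta$ consider the set
\[ X_\beta = \setof{(\bar{a},\bar{b})}{n\in\omega,\ \bar{a},\bar{b}\in {^n|M|},\ E\bar{a}=E\bar{b},\ (M,\bar{a})\sim^{E\bar{a}}_\beta (M,\bar{b})}. \]
Since the $Q_\beta(p)$ are decreasing in $\beta$ (by definition each $Q_{\beta+1}(p)\subseteq Q_\beta(p)$, and limits are intersections), the sequence $\tupof{X_\beta}{\beta\in\On}$ is $\subseteq$-decreasing. Each $X_\beta$ is a subset of the fixed set $\bigcup_n {^n|M|}\times{^n|M|}$. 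Hence there is an ordinal $\alpha$, of cardinality at most that of this set, with $X_\alpha = X_{\alpha+1}$; indeed a strictly decreasing sequence of subsets of a set $S$ has length less than $\card{S}^+$.

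The key step is to show that $X_\alpha=X_{\alpha+1}$ implies $X_\alpha=X_\beta$ for all $\beta\ge\alpha$. We prove $X_\alpha\subseteq X_\beta$ by induction on $\beta\ge\alpha$. Limit stages are immediate from the intersection clause. For the successor step, suppose $X_\alpha\subseteq X_\beta$ and $(\bar{a},\bar{b})\in X_\alpha$. We must verify the forth and back clauses at level $\beta$ for $h:\bar{a}\mapsto\bar{b}$. Since $(\bar{a},\bar{b})\in X_{\alpha+1}$, the forth clause at level $\alpha$ gives, for each $\bar{c}$ from $|M\on E\bar{a}|$, a family $\setof{(h_j,q_j,\bar{d}_j)}{j\in J}$ with $h_j\in Q_\alpha(q_j)$ and $\Vee_j q_j=E\bar{c}$. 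We may shrink each $h_j$ to the map $\bar{a}\on q_j\,\bar{c}\on q_j\mapsto \bar{b}\on q_j\,\bar{d}_j$. Shrinking preserves membership in $Q_\alpha(q_j)$, by Proposition (\ref{equiv_fn_equal_and_Q_for_Fs_tuples}) together with the restriction lemma for the $F$'s, or directly by induction. So the pairs $(\bar{a}\on q_j\,\bar{c}\on q_j,\ \bar{b}\on q_j\,\bar{d}_j)$ lie in $X_\alpha$, hence in $X_\beta$ by the inductive hypothesis. This yields the forth clause at level $\beta$, and the back clause is symmetric. Therefore $(\bar{a},\bar{b})\in X_{\beta+1}$.

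Once this stabilization is in hand, any $\alpha$ with $X_\alpha=X_{\alpha+1}$ witnesses the proposition, and the least such $\alpha$ is well defined because $\On$ is well ordered.

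I expect the main obstacle to be the bookkeeping between finite tuples and the $<\mu$-length tuples allowed in the forth and back clauses. If $\mu>\omega$, the extensions $\bar{a}\bar{c}$ are no longer finite, so $X_\beta$ must range over all tuples of length $<\mu$ (still a set), not only over ${^n|M|}$. The argument is unchanged once $X_\beta$ is enlarged in this way, and the statement about $n$-tuples then follows by specialization. A second point needing care is that the $h_j$ in the definition are arbitrary partial isomorphisms extending $h_{q_j}$, not literally tuple maps. So the restriction-to-tuples step, namely that a restriction of an element of $Q_\alpha(q)$ to a subtuple containing $\bar{a}\on q$ remains in $Q_\alpha(q)$, should be isolated and proved first by a routine induction on $\alpha$.
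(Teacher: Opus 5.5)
Your proposal is correct and follows the paper's proof. The paper works with the complementary sets $\Gamma_\alpha$ of pairs that are \emph{not} $\sim_\alpha$-related and shows that $\Gamma_\alpha=\Gamma_{\alpha+1}$ propagates to every $\beta>\alpha$ by the same induction: limits by intersection, successors by taking the forth/back witnesses at level $\alpha$ and applying the inductive hypothesis. It then finishes with the same cardinality bound.

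Your two bookkeeping points cover steps the paper passes over silently. First, you enlarge to tuples of length $<\mu$ when $\mu>\omega$. Second, you prove a downward-closure lemma so that the witness $h_j\in Q_\alpha(q_j)$ can be replaced by the tuple map $\bar a\on q_j\,\bar c\on q_j\mapsto\bar b\on q_j\,\bar d_j$. The paper, by contrast, restricts $\Gamma_\alpha$ to finite tuples and asserts $(M,\bar a\bar c\on q_j)\sim^{q_j}_\alpha(M,\bar b\bar d_j)$ without comment.
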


\begin{proof}  Let $\Gamma_\alpha = \bigcup_{n<\omega}\setof{(\bar{a},\bar{b})}{\bar{a},\sss\bar{b}\in   {^n|M|} \sss\sss\&\sss\sss (M,\bar{a})\not\sim_\alpha^{E\bar{a}} (M,\bar{b})}$.
  Clearly, if $\alpha<\beta$ then
    $\Gamma_\alpha \subseteq \Gamma_\beta$. 

  First of all, we show by induction that if $\Gamma_\alpha = \Gamma_{\alpha +1}$ then for all $\beta>\alpha$ we have $\Gamma_\alpha = \Gamma_\beta$. For $\beta=\alpha+1$ this is trivial. If $\beta$ is a limit and
  $\Gamma_\alpha = \Gamma_\gamma$ for all $\gamma\in (\alpha,\beta)$ then by the definition of $\sim_\beta^p$ we have that $\Gamma_\alpha = \Gamma_\beta$.

  Finally, suppose $\beta>\alpha$ and
  $\Gamma_\alpha = \Gamma_\beta$, $(M,\bar{a})\sim^{E\bar{a}}_\beta (M,\bar{b})$. Suppose $\bar{c}\in {^{<\mu}|M|}$. Since $(M,\bar{a})\sim^{E\bar{a}}_{\alpha+1} (M,\bar{b})$ there is
  some $h:M\longrightarrow M\in Q_{\alpha+1}(E\bar{a})$ with $h:\bar{a}\longrightarrow \bar{b}$,  and, hence, by the definition of $Q_{\alpha+1}(E\bar{a})$,  $h\in Q_{\beta}(E\bar{a})$ and
  there is some $\setof{(h_j,q_j,\bar{d}_j)}{j\in J}$ such that for each $j\in J$ we have $h_j\in Q_\beta(q_j)$, $h_{q_j} \subseteq h_j$, $ \bar{c}\on q_j \in \dom (h_j)$, $h_j``\bar{c}\on q_j=\bar{d}_j$,
  $E\bar{d}_j=q_j$
  and $\Vee_{j\in J} q_j = E\bar{c}$.

  For each $j\in J$ we thus have that $(M,\bar{a}\bar{c}\on q_j)\sim^{q_j}_\alpha (M,\bar{b}\bar{d}_j)$. By the inductive assumption, $(M,\bar{a}\bar{c}\on q_j)\sim^{q_j}_\beta (M,\bar{b}\bar{d}_j)$.
  Thus $(M,\bar{a})\sim^{E\bar{a}}_{\beta+1} (M,\bar{b})$.

  The case where we have $\bar{d}\in {^{<\mu} |M|}$ and find $\bar{c}_i\in {^{\lh(\bar{d})}|M|}$ using the ``back'' property is similar.

  Clearly, for each $\alpha$ we have that $\Gamma_\alpha\subseteq {^n|M|}\times {^n|M|}$, and so $\card{\Gamma_\alpha} < \card{M}^+$ and the sequence of $\Gamma_\alpha$ must have length less than
  $\alpha+1$.
\end{proof}

\begin{remark}
If $\Omega = \set{\bot,\top}$, \emph{i.e.}, if we are in classical model theory, we could now prove a version of Scott's isomorphism theorem.
  
\begin{proposition}\label{Scott_iso_thm} (\cite{BMPIV}, Proposition (3.9) Suppose $\Omega = \set{\bot,\top}$.
  Let $M$, $N$ be countable $L$-structures and $\alpha$ the Scott rank of $M$. Then
  $N\isom M$ if and only if $H^\alpha_{M,\emptyset} = H^\alpha_{N,\emptyset}$ and for all $\bar{a}\in {^{<\lambda}M}$ and $\bar{b}\in {^{\lh(\bar{a})}N}$
  if $H^\alpha_{M,\bar{a}} = H^\alpha_{N,\bar{b}}$ then $H^{\alpha+1}_{M,\bar{a}} = H^{\alpha+1}_{N,\bar{b}}$.
\end{proposition}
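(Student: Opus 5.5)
The plan is to reduce the statement to the classical Scott isomorphism theorem, using Corollary (\ref{equiv_fn_eq_and_Q_for_Hs_tuples}) to translate equality of the $H$-functions into the existence of refined partial isomorphisms. When $\Omega=\set{\bot,\top}$, every section has extent $\bot$ or $\top$. Sections of extent $\bot$ restrict to the unique empty section, so the presheaf machinery collapses to ordinary structures. A tuple $\bar{a}$ with $E\bar{a}=\top$ is just a tuple of elements. The sets $Q_\alpha(\top)$ are then exactly the classical $\alpha$-back-and-forth systems: in the forth and back clauses, the covering $\Vee_{j\in J}q_j=E\bar{c}=\top$ forces some $q_j=\top$, so one $h_j$ extending $h$ suffices. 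Hence, by the Corollary, for tuples of extent $\top$ we have $H^\alpha_{M,\bar{a}}=H^\alpha_{N,\bar{b}}$ if and only if $(M,\bar{a})\sim^\top_\alpha(N,\bar{b})$, that is, iff $\bar{a}\mapsto\bar{b}$ is a partial isomorphism lying in $Q_\alpha(\top)$.

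For the forward direction, suppose $g:M\to N$ is an isomorphism. The restriction $\bar{a}\mapsto g``\bar{a}$ lies in every $Q_\beta(\top)$: by induction on $\beta$, the witnesses are further restrictions of $g$. So the Corollary gives $H^\beta_{M,\bar{a}}=H^\beta_{N,g``\bar{a}}$ for all $\beta$. Taking $\bar{a}=\emptyset$ yields the first condition. For the second condition, suppose $H^\alpha_{M,\bar{a}}=H^\alpha_{N,\bar{b}}$, so $(M,\bar{a})\sim_\alpha(N,\bar{b})$. I would aim to show $(N,\bar{b})\sim_{\alpha+1}(M,\bar{a})$ by comparing with $(N,g``\bar{a})$. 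We have $(M,\bar{a})\sim_\beta(N,g``\bar{a})$ for every $\beta$. Composing partial isomorphisms, which is compatible with each $Q_\beta$ by an easy induction, gives $(N,g``\bar{a})\sim_\alpha(N,\bar{b})$, i.e.\ $(M,g^{-1}``\bar{b})\sim_\alpha(M,\bar{a})$ after transporting along $g$. Since $\alpha$ is the Scott rank of $M$, this pair is $\sim_{\alpha+1}$-equivalent, and composing back gives $H^{\alpha+1}_{M,\bar{a}}=H^{\alpha+1}_{N,\bar{b}}$.

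For the converse, let $S$ be the set of finite partial maps $\bar{a}\mapsto\bar{b}$ with $H^\alpha_{M,\bar{a}}=H^\alpha_{N,\bar{b}}$. The first hypothesis gives $\emptyset\mapsto\emptyset\in S$. Whenever $(\bar{a},\bar{b})\in S$, the second hypothesis gives $H^{\alpha+1}_{M,\bar{a}}=H^{\alpha+1}_{N,\bar{b}}$, so the map lies in $Q_{\alpha+1}(\top)$. Applying forth to a single $c\in M$ produces a $d\in N$ with $(\bar{a}c)\mapsto(\bar{b}d)\in Q_\alpha(\top)$, which is again in $S$ by the Corollary; back works symmetrically. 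Enumerating $M$ and $N$ in order type $\omega$ and alternating forth and back steps builds an increasing chain of elements of $S$. Each is a partial isomorphism, since $Q_\alpha\subseteq Q_0$, and the union is an isomorphism $M\cong N$.

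The main obstacle I expect is the forward direction's second clause: the Scott rank is defined only via self-equivalences inside $M$, so transferring it to pairs $(M,\bar{a})$, $(N,\bar{b})$ requires the composition and transport lemmas for $Q_\beta$. I would prove those first, by a routine induction on $\beta$ in the two-valued setting. A secondary point is that the paper's Scott-rank proposition is stated for tuples of finite length. I would restrict attention to finite tuples throughout, which suffices for the countable construction.
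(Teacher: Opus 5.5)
Your proposal is correct and follows the route the paper has in mind. The paper only cites the proof from its companion paper, but the text right after the statement describes exactly your converse: a chain of $H^\alpha$-equivalent finite partial maps built by alternating forth and back, with the union taken after $\omega$ steps. Your forward direction, which transports $\bar{b}$ into $M$ along the isomorphism and then invokes the Scott rank of $M$, is the standard one.

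One caveat: your restriction to finite tuples is forced, not merely sufficient for the countable construction, because for $\lambda>\omega$ the forward implication is false as literally stated. Take $M=N=(\mathbb{Q},<)$, which has Scott rank $0$, and let $\bar{a}$ and $\bar{b}$ be order-preserving $\omega$-enumerations of $\mathbb{Q}$ and of $\mathbb{Q}\setminus\{0\}$ respectively. Then $H^0_{M,\bar{a}}=H^0_{N,\bar{b}}$, yet $H^1_{M,\bar{a}}(N,\bar{b}0,0)=\bot\neq\top=H^1_{N,\bar{b}}(N,\bar{b}0,0)$, since every $c\in\mathbb{Q}$ is some $a_i$ while no $b_i$ is $0$. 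So the statement has to be read with $\lambda=\omega$.
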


However, for arbitrary complete Heyting algebras $\Omega$ the conditions on the $H^\alpha$ are not so helpful
in constructing a sequence of partial isomorphims forming a chain under inclusion of which one could take the union after $\omega$-many steps.
Rather one would obtain a system of functions with some increasing coherence, but on
smaller and smaller portions of $|M|$. It is not clear to us, at this point, how one might trade off
such a system of functions for a global similarity property (such as isomorphim is) going beyond what
can be translated directly from the back and forth properties themselves.
\end{remark}

\section{Languages}\label{languages}

An explicit account of the construction of a first order language over
a signature $\tau$ can be found, for example in \cite{Rothmaler}.
We adopt (a mild adaptation of) V\"a\"an\"anen's definition  of
the infinitary language $L_{\kappa\lambda}$ (\cite{Vaananen}), Definition (9.12)
for a concretete definition of infinitary languages.


\begin{definition}\label{defn_L_kappa_lambda;mu}
  Let $\mu$, $\lambda$ and $\kappa$ be cardinals.
  The set $L_{\kappa\lambda;mu}$ is defined by
  \begin{enumerate}
  \item For $\alpha<\lambda$ the variable $v_\alpha$  is a term. Each constant symbol $c\in  \mathcal C$ is a term.
    If $f$ is an $n$-ary
    function symbol and $t_0$, \dots, $t_{n-1}$ are terms then so is $f(t_0,\dots,t_{n-1})$.
\item If $t_0$ and $t_1$ are terms, then $t_0=t_1$ is an $L_{\kappa\lambda;\mu}$-formula.
\item If $t_0$,\dots, $t_{n-1}$ are terms and $R$ is an $n$-ary relation symbol then\break
   $R(t_0,\dots t_{n-1})$ is an $L_{\kappa\lambda;\mu}$-formula.
\item If $\phi$ is an $L_{\kappa\lambda;\mu}$-formula, so is $\neg \phi$.
\item If $\Phi$ is a set of $L_{\kappa\lambda;\mu}$-formulae of size less than $\kappa$
  with a fixed set $V$ of free variables and $\card{V} < \lambda$,
 $\Wedge\Phi$ is an $L_{\kappa\lambda;\mu}$-formula.
\item If $\Phi$ is a set of $L_{\kappa\lambda;\mu}$-formulae of size less than $\kappa$
  with a fixed set $V$ of free variables and $\card{V} < \lambda$,
  $\Vee\Phi$ is an $L_{\kappa\lambda;\mu}$-formula.
\item If $\phi$ is an $L_{\kappa\lambda;\mu}$-formula,
  $V \in [\setof{v_\alpha }{\alpha < \lambda}]^{<\mu}$ and $\card{V}<\lambda$,
   $\forall V\sss\phi$ is an $L_{\kappa\lambda;\mu}$-formula.
\item If $\phi$ is an $L_{\kappa\lambda;\mu}$-formula,
  $V \in [\setof{v_\alpha }{\alpha < \lambda}]^{<\mu}$ and $\card{V}<\lambda$,
  $\exists V\sss\phi$ is an $L_{\kappa\lambda;\mu}$-formula.
  \end{enumerate}  
\end{definition}

Thus $\lambda$ bounds the size of the set of free variables of a formula,
$\kappa$ bounds the size of the infintary conjuctions and disjunctions, and
$\mu$ bounds the size of the sets of variables over which one may quantify.

\begin{definition}\label{defn_L_infty_lambda} For $\mu$, $\lambda$ and $\kappa$
  cardinals we set $L_{\kappa\lambda} = L_{\kappa\lambda;\lambda}$
  and $\bar{L}_{\kappa\lambda} = L_{\kappa\lambda;2}$.   We also set
  $L_{\infty\lambda} = \bigcup_{\kappa\in \Card} L_{\kappa\lambda}$
  and
  $\bar{L}_{\infty\lambda} = \bigcup_{\kappa\in \Card}
  \bar{L}_{\kappa\lambda}$.
\end{definition}

So, for example, $\bar{L}_{\omega\omega}$ is first order logic over $\tau$.

In this paper we work with languages over the signature $\tau$ in a way that
allows us to treat classical first order logic ($\bar{L}_{\omega\omega}$) and the
infinitary logics $\bar{L}_{\infty\omega}$ and $L_{\infty\lambda}$, for $\lambda$ a cardinal,
simultaneously.

Thus we consider only languages, $L$, with $L=\bar{L}_{\omega\omega}$, $L=\bar{L}_{\infty\omega}$ or
$L=L_{\infty\lambda}$.


\begin{definition} Let $\phi \in L$. We say $\phi$ is \emph{unnested} if
  every atomic subformula of $\phi$ is unnested.
  Write ${\mathcal U}$ for the collection of unnested formulae.
\end{definition}

\begin{remark} If $L$ is relational every formula is unnested. 
\end{remark}

\begin{definition}\label{defn_pp_fmla} A formula $\phi \in L$ is
  a \emph{positive primitive} formula~(\emph{pp formula}), if it is made up from unnested atomic formulae
  using only conjunction and existential quantification.

\end{definition}

The next lemma enables us to show there is a normal form,
at least in terms of evaluated extent, for positive primitive
formulae in $\bar{L}_{\omega\omega}$.

\begin{lemma} Suppose $\phi_0(\bar{v},\bar{u})$ and $\phi_1(\bar{v},\bar{u})$ are $L$-formulae with
  $\lh(\bar{v})+\lh(\bar{u})$-free variables,
  $M\in \pshol$ and  $\bar{a}\in {^{\lh(\bar{v})} |M|}$. Then
  \[ [\exists \bar{v}_0 \sss \phi_0(\bar{a},\bar{v}_0) \land \exists \bar{v}_1 \sss \phi_1(\bar{a},\bar{v}_1)]_M =
  [\exists \bar{v}_0 \sss \exists \bar{v}_1 \sss ( \phi_0(\bar{a},\bar{v}_0) \land  \phi_1(\bar{a}),\bar{v}_1)]_M .\]
 \end{lemma}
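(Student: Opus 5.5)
The plan is to unfold the Fourman–Scott/Higgs semantics of $\land$ and $\exists$ in presheaf models and use infinite distributivity of the complete Heyting algebra $\Omega$. Recall (as in \cite{BMPI}) that for a formula $\psi(\bar{a},\bar{v})$ we have $[\exists\bar{v}\sss\psi(\bar{a},\bar{v})]_M = \bigvee\setof{[\psi(\bar{a},\bar{c})]_M}{\bar{c}\in{^{\lh(\bar{v})}|M|}}$ (possibly with the additional conjunct $\land E\bar{c}$, depending on the convention; the argument below is insensitive to this, since the factor is carried along in the same way on both sides), and $[\psi\land\chi]_M = [\psi]_M\land[\chi]_M$. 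I will also assume, renaming bound variables if necessary, that $\bar{v}_0$ and $\bar{v}_1$ are disjoint and that $\bar{v}_1$ does not occur freely in $\phi_0$, nor $\bar{v}_0$ in $\phi_1$. This is implicit in the statement.

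First I compute the left-hand side:
\[ [\exists \bar{v}_0\sss\phi_0(\bar{a},\bar{v}_0)]_M\land[\exists \bar{v}_1\sss\phi_1(\bar{a},\bar{v}_1)]_M = \Vee_{\bar{c}_0}[\phi_0(\bar{a},\bar{c}_0)]_M \land \Vee_{\bar{c}_1}[\phi_1(\bar{a},\bar{c}_1)]_M .\]
Since $\Omega$ is a complete Heyting algebra, $x\land\bigvee_i y_i=\bigvee_i(x\land y_i)$. Applying this twice gives
\[ \Vee_{\bar{c}_0}\Vee_{\bar{c}_1}\bigl([\phi_0(\bar{a},\bar{c}_0)]_M\land[\phi_1(\bar{a},\bar{c}_1)]_M\bigr). \]

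Next I compute the right-hand side. The outer quantifier gives $\Vee_{\bar{c}_0}[\exists\bar{v}_1(\phi_0(\bar{a},\bar{c}_0)\land\phi_1(\bar{a},\bar{v}_1))]_M$, and the inner one gives $\Vee_{\bar{c}_1}\bigl([\phi_0(\bar{a},\bar{c}_0)]_M\land[\phi_1(\bar{a},\bar{c}_1)]_M\bigr)$. Here I use that substituting $\bar{c}_1$ for $\bar{v}_1$ leaves $\phi_0(\bar{a},\bar{c}_0)$ unchanged, because $\bar{v}_1$ is not free in $\phi_0$. The two double suprema coincide, which proves the identity.

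The main obstacle is the extent bookkeeping in the presheaf interpretation, if $\exists$ is interpreted with $E\bar{c}$ factors, or if instantiating with sections of smaller extent requires restricting $\bar{a}$. In that case I check that $E\bar{c}_0\land E\bar{c}_1=E(\bar{c}_0\bar{c}_1)$. This holds because the extent of a tuple is the meet of the extents of its entries, so the factors on the two sides agree term by term. I also check that the interpretation of a formula at a tuple depends only on the variables actually occurring; this is needed to justify dropping $\bar{c}_1$ from $[\phi_0(\bar{a},\bar{c}_0)]_M$. This follows by a routine induction on formulae from the definitions in \cite{BMPI}.
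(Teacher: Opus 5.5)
Your proposal is correct and is essentially the paper's proof: unfold $\land$ and $\exists$, use the frame law $x\land\Vee_i y_i=\Vee_i(x\land y_i)$ in $\Omega$ to turn the meet of the two suprema into a double supremum, and fold it back up (the paper carries the factor $q\land E\bar{a}$, with $q=E\phi_0\land E\phi_1$, through every line). One small correction: in the BMPI semantics a value is bounded by the extent of the whole parameter tuple (constants, for instance, are interpreted as $\mathfrak{c}^M\on E\bar{a}$), so in the context that also contains $\bar{v}_1$ the value of $\phi_0$ is $[\phi_0(\bar{a},\bar{c}_0)]_M\land E\bar{c}_1$, not something independent of $\bar{c}_1$, and you should justify dropping $\bar{c}_1$ by absorption, using $[\phi_1(\bar{a},\bar{c}_1)]_M\le E\bar{c}_1$, rather than by an independence claim.
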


\begin{proof} Start by noting that
  $E\exists \bar{v}_0 \sss \phi_0(\bar{v},\bar{v}_0) = E\phi_0(\bar{v},\bar{v}_0)$, $= E\phi_0$, say, where
  $E\phi_0 = \Wedge\setof{E^Cc}{c\hbox{ occurs in }\phi_0}$, and
  similarly for $\phi_1$. Let $q = E\phi_0\land E\phi_1$.

  By the definition of interpretation of formulae (\cite{BMPI}, Definition (2.20))
  and the distributivity of $\land$ over $\Vee$ we have
  \begin{eqn}
    \begin{split}
      [\exists \bar{v}_0 \sss \phi_0(\bar{a},\bar{v}_0) \land & \exists \bar{v}_1 \sss \phi_1(\bar{a},\bar{v}_1)]_M \\
      & = \sss\sss q \land E\bar{a} \land
      [\exists \bar{v}_0 \sss \phi_0(\bar{a},\bar{v}_0)]_M  \land [ \exists \bar{v}_1 \sss \phi_1(\bar{a},\bar{v}_1)]_M \\
      & = \sss\sss q \land E\bar{a} \land
      (\Vee_{\bar{s}\in {^{\lh(\bar{u})} |M|}}  [\phi_0(\bar{a},\bar{s})]_M ) \land
            ( \Vee_{\bar{t}\in {^{\lh(\bar{u})} |M|}}  [\phi_1(\bar{a},\bar{t})]_M)  \\
      & = \sss\sss q \land E\bar{a} \land
      \Vee_{\bar{s}\in {^{\lh(\bar{u})} |M|}} (  \Vee_{\bar{t}\in {^{\lh(\bar{u})} |M|}} ( [\phi_0(\bar{a},\bar{s})]_M
             \land [\phi_1(\bar{a},\bar{t})]_M   ) )\\
      & = \sss\sss q \land E\bar{a} \land
             \Vee_{\bar{s}\in {^{\lh(\bar{u})} |M|}} (  \Vee_{\bar{t}\in {^{\lh(\bar{u})} |M|}}
                  ( [\phi_0(\bar{a},\bar{s})  \land \phi_1(\bar{a},\bar{t})]_M ) )\\
      & = \sss\sss q \land E\bar{a} \land
      \Vee_{\bar{s}\in {^{\lh(\bar{u})} |M|}} (  [ \exists \bar{v}_1 ( \phi_0(\bar{a},\bar{s})  \land [\phi_1(\bar{a},\bar{v}_1)]_M ) )\\
        & =  [\exists \bar{v}_0 \sss (  \exists \bar{v}_1\ssss ( \phi_0(\bar{a},\bar{v}_0)
             \land \phi_1(\bar{a},\bar{v}_1) )) ]_M  .
    \end{split}
\end{eqn}
\end{proof}

\begin{corollary}\label{normal_form_for_pp} For any positive primitive formula $\phi(\bar{v})$ in $\bar{L}_{\omega\omega}$
  there is a collection $\setof{\phi_i}{i<m}$ of
  unnested atomic formulae and a string of existential quantifiers
  $\exists v_0\sss\dots\sss\exists v_{n-1}$ such that setting $\phi^*$
  to be $\exists v_0\sss\dots\sss\exists v_{n-1} (\phi_0 \land \dots
  \land \phi_{k-1})$, for all $\bar{a}\in {^{\lh(\bar{u})} |M|}$ we have
  $[\phi(\bar{a})]_M = [\phi^*(\bar{a})]_M $.
\end{corollary}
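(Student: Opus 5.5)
The plan is to prove Corollary (\ref{normal_form_for_pp}) by induction on the construction of the positive primitive formula $\phi(\bar{v})$. The aim is to attach to each such $\phi$ a formula $\phi^*$ of the stated prenex form $\exists v_0\sss\dots\sss\exists v_{n-1}(\phi_0\land\dots\land\phi_{k-1})$, with the same free variables, and then show that $[\phi(\bar{a})]_M=[\phi^*(\bar{a})]_M$ for every $M\in\pshol$ and every tuple $\bar{a}$ of the right length. Throughout, the bound variables will be renamed apart so that no clashes arise.

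The induction runs as follows.
\begin{enumerate}
\item Atomic case. If $\phi$ is an unnested atomic formula, take $\phi^*=\phi$, with no quantifiers and $k=1$.
\item Existential case. If $\phi=\exists u\sss\psi$, let $\psi^*=\exists\bar{w}(\psi_0\land\dots\land\psi_{k-1})$ be given by the inductive hypothesis, and set $\phi^*=\exists u\sss\exists\bar{w}(\psi_0\land\dots\land\psi_{k-1})$. By the definition of interpretation (\cite{BMPI}, Definition (2.20)), $[\exists u\sss\psi(\bar{a},u)]_M$ is computed from the family of values $[\psi(\bar{a},s)]_M$ for $s\in|M|$, together with the extent factors. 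Replacing each $[\psi(\bar{a},s)]_M$ by $[\psi^*(\bar{a},s)]_M$ leaves that family unchanged, so the two interpretations agree. This is just the compositionality of the interpretation.
\item Conjunction case. If $\phi=\psi\land\chi$, first apply the inductive hypothesis to get $\psi^*=\exists\bar{v}_0\sss\psi'$ and $\chi^*=\exists\bar{v}_1\sss\chi'$, with $\bar{v}_0,\bar{v}_1$ disjoint from each other and from the free variables. Then $[\phi(\bar{a})]_M=[\psi^*(\bar{a})\land\chi^*(\bar{a})]_M$, again by compositionality. The preceding lemma now gives that this equals $[\exists\bar{v}_0\sss\exists\bar{v}_1(\psi'\land\chi')(\bar{a})]_M$. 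Since $\psi'$ and $\chi'$ are conjunctions of unnested atomic formulae, so is $\psi'\land\chi'$, after flattening by associativity of $\land$ in $\Omega$.
\end{enumerate}

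The main obstacle is the bookkeeping in the conjunction step. The lemma is stated with a common shape of variables $(\bar{v},\bar{u})$ for $\phi_0$ and $\phi_1$. So I would first pad $\psi'$ and $\chi'$ so that each has the full free-variable tuple of $\phi$; this is harmless, since variables that do not occur do not affect the value. I also need the extents $E\phi_0$, which depend only on the constants occurring, to match across the rewriting. This holds because the set of constants occurring is preserved by every step above, and so the factor $q\land E\bar{a}$ appearing in the lemma's computation is the same on both sides.

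Finally, the quantifier string in the statement ranges over finitely many variables because we are in $\bar{L}_{\omega\omega}$. Each existential step adds a single variable, and each conjunction step concatenates two finite strings, so $\phi^*$ is indeed of the required finite form. The values $n$ and $k$ are read off from the construction.
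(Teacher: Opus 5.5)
Your proposal is correct and follows the argument the paper intends. The paper gives no separate proof and presents the normal form as a direct consequence of the preceding lemma, which amounts to your induction on the construction of $\phi$: the lemma handles the conjunction step, and compositionality together with $E\psi=E\psi^*$ (the constants are unchanged) handles the existential step.

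One small correction to your padding remark: in this semantics a non-occurring variable is not inert, since the value at $\bar{a}$ carries the factor $E\bar{a}$ for the whole context. Padding is still harmless, because the value of $\psi\land\chi$ at $\bar{a}$ is already the meet of the values of $\psi$ and $\chi$ computed in the common context, which is exactly the form the lemma requires.
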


We do not have similar results for languages in which infinitary conjunctions and disjunctions are allowed
as we do not in general have distributivity of $\Wedge$ over $\Vee$ in complete Heyting algebras.

We introduce the notion of \emph{quantifier degree} which we will use in the ensuing.

\begin{definition}\label{quantifier_degree} (Quantifier degree) Let $\phi \in L_{\infty\lambda}$.
  The \emph{quantifier degree} of $\phi$, $d(\phi)$, is defined by induction on its complexity.
  \begin{center}
\vskip-10pt
\begin{tabular}{ll}
 $\phi$ atomic  & \qquad $d(\phi)= 0$ \\
  $\phi = \neg \psi$ & \qquad $d(\phi)= d(\psi)$ \\
  $\phi = \psi \to \chi$ &  \qquad $d(\phi)= \max(\set{d(\psi), d(\chi)})$ \\
  $\phi = \Wedge \Phi$ & \qquad $d(\phi) = \sup\setof{d(\psi)}{\psi \in \Phi}$ \\
  $\phi = \Vee \Phi$ & \qquad $d(\phi) = \sup\setof{d(\psi)}{\psi \in \Phi}$ \\ 
  $\phi = \forall V \psi$ & \qquad $d(\phi) = d(\psi) + 1$ \\
  $\phi = \exists V \psi$ & \qquad $d(\phi) = d(\psi) + 1$ \\
\end{tabular}
\end{center}
\end{definition} 

\begin{note-nono} When $\lambda=\omega$ this definition induces the `usual' quantifier degree on
  $\bar{L}_{\omega\omega}$ or $\bar{L}_{\infty\omega}$.
  Observe, for example, writing $\exists x $ for $\exists \set{x}$ for $\set{x}$ a singleton,
  if $\phi$ is $\exists \set{x,y}\sss x=y$ and $\psi$ is $\exists x
  \exists y  \sss x=y$ then $d(\phi)=1$, but $d(\psi)=2$.
\end{note-nono}

We also introduce a \emph{modified rank}, which takes into account the complexity gained in allowing function symbols in
the language. This rank is a slight variant of that found in
\cite{EFT}, Exercise (XII.3.15),  which in turn is a variant of
a rank found in \cite{Flum}, p.254. It is used in \cite{EFT} to give a framework for a proof of a Fra\"iss\'e-style theorem
for arbitrary finite first order languages (\emph{i.e.},
possibly with function symbols and symbols for constants).


\begin{definition}\label{modified_rank} (Modified Rank) Let $\phi \in L_{\infty\lambda}$.
  The \emph{modified rank} of terms $t$ and formulae $\phi$, $r(t)$, $r(\phi)$, is defined by induction on their complexity.
  \begin{center}
\vskip-10pt
\begin{tabular}{ll}
  $t=v$  & \qquad $r(\phi)= 0$ \\
  $t = c$ & \qquad $r(\phi)= 1$ \\
  $t= f(s_0,\dots,s_{n-1})$ & \qquad $r(\phi)= 1 + r(s_0) + \dots r(s_{n-1})$ \\
  $\phi = R(t_0,\dots,t_{m-1})$ & \qquad $r(\phi)= r(t_0) + \dots r(t_{m-1})$ \\
  $\phi = \hbox{`}t_0 = t_1\hbox{'}$ & \qquad $r(\phi)=  \max\set{0,r(t_0) + r(t_1) -1}$ \\
  $\phi = \neg \psi$ & \qquad $r(\phi)= r(\psi)$ \\
  $\phi = \psi \to \chi$ & \qquad $r(\phi)= \max(\set{r(\psi), r(\chi)})$ \\
  $\phi = \Wedge \Phi$ & \qquad $r(\phi) = \sup\setof{r(\psi)}{\psi \in \Phi}$ \\
  $\phi = \Vee \Phi$ & \qquad $r(\phi) = \sup\setof{r(\psi)}{\psi \in \Phi}$ \\ 
  $\phi = \forall V \psi$ & \qquad $r(\phi) = r(\psi) + 1$ \\
  $\phi = \exists V \psi$ & \qquad $r(\phi) = r(\psi) + 1$ \\
\end{tabular}
\end{center}
\end{definition} 

\begin{remark}\label{m_rank_le_qdeg} Note for any formula $\phi$ we have $d(\phi)\le r(\phi)$.
If the language is relational then we always have equality: $d$ and $r$ are the same.
However in languages without function symbols, but with constants we already have $r\ne d$. 
\end{remark}

\begin{definition} Let $\alpha$ be an ordinal.
  $L^{\alpha}$ is the collection of $L$-formulae $\phi$
  such that $r(\phi)\le \alpha$.
\end{definition}

\section{Unnesting formulae}\label{unnesting}

In this section we show that every formula is equivalent, in terms of
evaluated extent, to an unnested one. The analogous result for
classical model theory is relatively routine, although most model
theory textbooks appear not to touch on the question. An exception is
\cite{Hodges}, which gives a `proof by example' (Theorem (2.6.1)). In
the context of presheaves of models one has to work somewhat harder.

Recall that $L$-terms are defined by recursion, with base cases being
that of being either a variable symbol or a constant symbol and the
recursive mechanism being the application of an $n$-ary function
symbol to a sequence of $n$-many already defined $L$-terms.

\begin{definition}\label{defn_nested} A an atomic $L$-formula $\phi$ is \emph{nested}
  if it is of one of the following forms:
  \begin{itmz}
  \item $c= d$, where $c$, $d\in {\mathcal C}$ are constant symbols
  \item $f(t_0,\dots,t_{n-1}) =t_n$ or $t_n = f(t_0,\dots,t_{n-1})$, where $f$ is an $n$-ary function symbol for some $n<\omega$ and $t_0,\dots,t_n$ are $L$-terms, not all of which are variable symbols.
    \item $R(t_0,\dots,t_{n-1})$, where $R$ is an $n$-ary relation symbol for some $n<\omega$ and $t_0,\dots,t_{n-1}$ are $L$-terms, not all of which are variable symbols.
  \end{itmz}
\end{definition}

\begin{definition}\label{defn_phi^exists} We recursively define for each atomic $L_{\infty\lambda}$-formula $\phi(\bar{v})$,
  with free variables $\bar{v}$, an unnested, in fact positive primitive, formula $\phi^\exists(\bar{v})$.

For unnested atomic formulae we simply take $\phi^\exists = \phi$.

If $\phi$ is $c=d$, where $c$, $d$ are constant symbols we set
$\phi^\exists$ to be $\exists v_0 \sss v_0 = c\sss\sss\&\sss\sss v_0 = d $.

Now, suppose $\phi$ is $f(t_0,\dots,t_{n-1}) =t_n$, where $f$ is an
$n$-ary function symbol for some $n<\omega$ and $t_0,\dots,t_n$ are
$L$-terms, not all of which are variable symbols.

Suppose that $S = \setof{i_j}{j\le m}$ is an enumeration of the collection of $i\le n$ such that $t_i$ is not a variable symbol.
For $i\ni S$ let $v_i = t_i$ and let $\setof{v_{i_j}}{j\le m}$ be a collection of distinct variable symbols
none of which appear in any of the $t_i$ for $i<n$. We set $\phi'$ to be
\[\exists v_{i_0}\sss\dots\sss \exists v_{i_{m}} \sss\sss
v_{i_0} = t_{i_0} \sss\sss\&\sss\sss \dots \sss\sss\&\sss\sss v_{i_{m}} = t_{i_{m}} \sss\sss\&\sss\sss   f(v_0,\dots,v_{n-1}) = v_n.\] 
We then set $\phi''$ to be 
\begin{eqn}
  \begin{split} \exists v_{i_0}\sss\dots\sss \exists v_{i_{m}} \sss\sss (v_{i_0} = t_{i_0}& )^{\exists} \sss\sss\&\sss\sss \dots
    \sss\sss\&\sss\sss (v_{i_{m}} = t_{i_{m}})^\exists\\
    &
    \sss\sss\&\sss\sss f(v'_0,\dots,v'_{n-1}) = v'_n
  \end{split}
\end{eqn}
where for $i\ni S$ we have $v'_i = v_i$ and for $i=i_j\in S$ we have $v'_i = v_{i_j}$.

The definition for $\phi$ being $t_n = f(t_0,\dots,t_{n-1})$ is almost
identical, we just switch the order of the final conjunct to be $ v_n
= f(v_0,\dots,v_{n-1})$.

Finally, suppose $\phi$ is $R(t_0,\dots,t_{n-1})$, where $R$ is an
$n$-ary relation symbol for some $n<\omega$ and $t_0,\dots,t_{n-1}$
are $L$-terms, not all of which are variable symbols.  We have a
similar definition to the previous one.

Suppose that $S = \setof{i_j}{j\le m}$ is an enumeration of the
collection of $i< n$ such that $t_i$ is not a variable symbol.  For
$i\ni S$ let $v_i = t_i$ and let $\setof{v_{i_j}}{j\le m}$ be a
collection of distinct variable symbols none of which appear in any of
the $t_i$ for $i<n$.  We then set $\phi''$ to be
\begin{eqn}
  \begin{split} \exists v_{i_0}\sss\dots\sss \exists v_{i_{m}} \sss\sss (v_{i_0} = t_{i_0}& )^{\exists} \sss\sss\&\sss\sss \dots
    \sss\sss\&\sss\sss (v_{i_{m}} = t_{i_{m}})^\exists\\
    &
    \sss\sss\&\sss\sss R(v'_0,\dots,v'_{n-1}),
  \end{split}
\end{eqn}
where for $i\ni S$ we have $v'_i = v_i$ and for $i=i_j\in S$ we have $v'_i = v_{i_j}$.


\end{definition}

\begin{definition} If $\psi$ is an arbitrary $L$-formula let $\psi^\exists$ be the formula derived from $\psi$ by for each atomic subformula $\phi$ replacing $\phi$ by $\phi^\exists$.
\end{definition}

\begin{proposition}\label{rank=degree_of_ext} If $\psi$ is an arbitrary $L$-formula then $r(\psi)=d(\psi^\exists)$.
\end{proposition}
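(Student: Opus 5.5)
We prove Proposition (\ref{rank=degree_of_ext}) by induction on the complexity of $\psi$. The unnesting $\psi\mapsto\psi^\exists$ only changes atomic subformulae. Both $r$ and $d$ obey the same clauses for $\neg$, $\to$, $\Wedge$, $\Vee$, $\forall V$ and $\exists V$ (take the same sup or max, or add $1$). So once the atomic case is settled, every inductive step is immediate: for example $d((\exists V\chi)^\exists)=d(\exists V\chi^\exists)=d(\chi^\exists)+1=r(\chi)+1=r(\exists V\chi)$. The whole content is therefore the claim $r(\phi)=d(\phi^\exists)$ for atomic $\phi$. We read $\phi^\exists$ as the formula $\phi''$ of Definition (\ref{defn_phi^exists}), counting one quantifier for each of the nested terms being named.

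For the atomic case we first prove a claim about terms. For every term $t$ and every variable $w$ not occurring in $t$, we have $d((w=t)^\exists)=r(t)$, and symmetrically for $(t=w)^\exists$. We argue by induction on $t$.

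\begin{itemize}
\item If $t$ is a variable, $w=t$ is unnested, so the degree is $0=r(t)$.
\item If $t=c$ is a constant, $w=c$ is unnested, so the degree is $0$. This falls short of $r(c)=1$ by one. The discrepancy is exactly the $-1$ appearing in the equality clause of Definition (\ref{modified_rank}), so we handle it there rather than in this claim.
\item If $t=f(s_0,\dots,s_{n-1})$, then $(w=t)^\exists$ is $\exists$ (over the new variables for the non-variable $s_i$) of the conjunction of the $(v_{i_j}=s_{i_j})^\exists$ with $w=f(\bar v')$. Its degree is the sum over the quantified blocks of the inner degrees plus one per quantifier.
\end{itemize}

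The degrees of the $(v_{i_j}=s_{i_j})^\exists$ are handled as in the constant case, and we check that the total matches $1+\sum r(s_i)$. In the general statement the bookkeeping is ``$d=r(t)-1$ when $t$ is a constant and $d=r(t)$ otherwise''. Each nested argument $s_i$ contributes $r(s_i)$: a non-constant $s_i$ contributes one quantifier plus $r(s_i)-1$ from its own unnesting, while a constant $s_i$ contributes one quantifier plus $0$. The extra $1$ in $r(t)$ absorbs the outermost layer.

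We then treat the three atomic shapes. For $R(t_0,\dots,t_{n-1})$ we sum the contributions $r(t_i)$ and compare with $r(\phi)=\sum r(t_i)$. For $t_0=t_1$, and for $c=d$ with $\phi^\exists=\exists v_0(v_0=c\land v_0=d)$ of degree $1=r(c)+r(d)-1$, we check against $\max\{0,r(t_0)+r(t_1)-1\}$. The $-1$ here reflects that one side may remain unnamed, or that only one variable is shared.

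The main obstacle is this term bookkeeping. The quantifier degree as defined adds $1$ for each block $\exists V$ but takes the sup, not the sum, across conjuncts. So we must decide whether the nested existentials $\exists v_{i_0}\cdots\exists v_{i_m}$ of Definition (\ref{defn_phi^exists}) are read as successive single quantifiers, which makes degrees add, or as one block. We commit to successive singletons, as written. With that reading we would verify the additive identity carefully on the base cases (constants and single function applications) before running the induction, adjusting the claim's offset for constants if the arithmetic demands it.
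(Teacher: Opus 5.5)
Your reduction to the atomic case is fine: $r$ and $d$ obey identical clauses for the connectives and quantifiers, and $(\cdot)^\exists$ only changes atomic subformulae. The gap is in the term bookkeeping, and it cannot be closed.

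First, your stated invariant is wrong even for flat terms. The formula $w=f(x)$ is already unnested, so $d((w=f(x))^\exists)=0$, while $r(f(x))=1$. The normalisation forced by $r(w=t)=\max\{0,r(t)-1\}$ is $d((w=t)^\exists)=r(t)-1$ for every non-variable $t$, and under it constants are not exceptional.

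Second, and decisively, your step ``each nested argument $s_i$ contributes $r(s_i)$'' assumes the contributions add across arguments. They do not. In $\phi''$ the formulae $(v_{i_j}=s_{i_j})^\exists$ are separate conjuncts under the prefix. Reading $\exists v_{i_0}\cdots\exists v_{i_m}$ as successive singletons makes only those $|S|$ quantifiers add. The inner degrees are combined by a maximum, because $d$ of a conjunction is a supremum. So $d((w=f(\bar s))^\exists)=|S|+\max_{j\in S}d((v_{i_j}=s_{i_j})^\exists)$, whereas your induction needs $\sum_{j\in S}r(s_j)$.

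This is not a matter of adjusting offsets: the statement itself fails for the unnesting as defined, so the paper's ``immediate from the definitions'' does not hold up either. Take $\phi=R(g(h(x)),g(h(y)))$ with $R$ binary and $g,h$ unary. Then $r(\phi)=2+2=4$, but
\[ \phi^\exists=\exists w_0\exists w_1\,(\exists u\,(u=h(x)\land w_0=g(u))\land\exists u'\,(u'=h(y)\land w_1=g(u'))\land R(w_0,w_1)), \]
so $d(\phi^\exists)=2+1=3$.

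What your induction does prove, with the corrected invariant, is the inequality $d(\psi^\exists)\le r(\psi)$:
\begin{itemize}
\item For $S\ne\emptyset$ we have $|S|+\max_{j\in S}(r(s_j)-1)\le\sum_{j\in S}r(s_j)$, since each $r(s_j)\ge1$.
\item The same estimate handles $R(\bar t)$ and $f(t_0,\dots,t_{n-1})=t_n$; in the latter case $r(\phi)=\sum_{i\le n}r(t_i)$.
\item An equation between two constants gives $1=1$.
\item The inequality then propagates through the common clauses for connectives and quantifiers.
\end{itemize}
Equality does hold when $\tau$ has no function symbols. The inequality is also the direction actually used later: passing from $r(\phi)\le\alpha$ to an unnested formula of degree $\le\alpha$ only needs $Q_\alpha(p)\subseteq Q_\beta(p)$ for $\beta\le\alpha$.

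Equality in general needs a different unnesting. One option is a prenex form with one existential quantifier in front for each occurrence of a non-variable subterm (one fewer for an equation). Theorem (\ref{unnesting_pres_value_atom_fml}) would then have to be re-verified for that form.
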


\begin{proof} Immediate from the definitions.
\end{proof}

We will show for $L$-formulae $\psi(\bar{v})$, $M\in \pshol$ and
$\bar{a}\in {^{\lh(\bar{v})}|M|}$ that $[\psi(\bar{a})]_M =
[\psi^\exists(\bar{a})]_M$. The crux of the proof will be to prove
this for atomic formulae. The easy remainder of the proof is via the
recursive definition of interpretation of formulae.

The following lemma from \cite{BMPI} will be useful.

\begin{lemma}\label{char_funct} (\cite{BMPI}, Lemma (2.25).) Let $\bar{v} = \tup{v_0, \ldots, v_{n-1}}$ be variables and
  let $t( \bar {v})$, $s( \bar {v})$  be $L$-terms. Let $M \in pSh(\Omega, L)$, with
  $\bar{a} = \tup{a_0, \ldots, a_{n-1}}$, $\bar{b}=\tup{ b_0, \ldots,  b_{n-1}} \in {^n|M \on Et \wedge Es| }$.
  Then
  \[ [\bar{a} = \bar{b}]_M \wedge [ s^M (\bar{a}) = t^M (\bar{a})]_M =
     [\bar{a} = \bar{b}]_M \wedge [ s^M (\bar{b}) = t^M (\bar{b})]_M .\]

     Particularly, $ [\bar a = \bar b]_M \wedge [ s^M (\bar{a}) = t^M (\bar{a})]_M \leq
                        [ s^M (\bar{b}) = t^M (\bar{b})]_M$.
\end{lemma}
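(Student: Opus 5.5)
This is Lemma (\ref{char_funct}), quoted from \cite{BMPI}, Lemma (2.25). The plan is to prove it by induction on the complexity of terms, first establishing a term-level substitution inequality and then combining two instances of it with symmetry.

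\textbf{Term-level claim.} For every term $t(\bar{v})$,
\[ [\bar{a}=\bar{b}]_M \le [t^M(\bar{a}) = t^M(\bar{b})]_M. \]
Here $[\bar{a}=\bar{b}]_M$ denotes $\Wedge_{i<n}[a_i=b_i]_M$. The cases are as follows.
\begin{itemize}
\item If $t$ is a variable $v_i$, the claim is trivial, since $[\bar{a}=\bar{b}]_M\le[a_i=b_i]_M$.
\item If $t$ is a constant $c$, both sides involve the same section ${\mathfrak c}^M$, possibly restricted to $E\bar{a}$, respectively $E\bar{b}$. The value $[\,{\mathfrak c}^M\on E\bar{a} = {\mathfrak c}^M\on E\bar{b}\,]_M = E{\mathfrak c}^M\land E\bar{a}\land E\bar{b}$ lies above $[\bar{a}=\bar{b}]_M$. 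This uses that $[\bar{a}=\bar{b}]_M\le E\bar{a}\land E\bar{b}$, and that $Ec\ge Et\land Es$ dominates the relevant extents because $\bar{a},\bar{b}$ lie in $M\on Et\land Es$.
\item If $t=f(s_0,\dots,s_{k-1})$, we use the fact that the interpretation $f^M$ of a function symbol in a presheaf model is a presheaf morphism on tuples, and hence satisfies
\[ \Wedge_j[x_j=y_j]_M\le[f^M(\bar{x})=f^M(\bar{y})]_M \]
(the standard strictness/extensionality property of $f^M$ recorded in \cite{BMPI}, \S{}2). Combining this with the inductive hypothesis for each $s_j$ gives the claim.
\end{itemize}

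\textbf{Equality atoms.} Since $[\cdot=\cdot]_M$ is a $\Omega$-valued equality, it satisfies symmetry and transitivity: $[x=y]_M\land[y=z]_M\le[x=z]_M$. Set $e=[\bar{a}=\bar{b}]_M$. Then
\[ e\land[s^M(\bar{a})=t^M(\bar{a})]_M \le [s^M(\bar{b})=s^M(\bar{a})]_M\land[s^M(\bar{a})=t^M(\bar{a})]_M\land[t^M(\bar{a})=t^M(\bar{b})]_M \le [s^M(\bar{b})=t^M(\bar{b})]_M. \]
Meeting both sides with $e$ gives one inequality of the displayed equation. The reverse inequality follows by swapping the roles of $\bar{a}$ and $\bar{b}$, using $[\bar{a}=\bar{b}]_M=[\bar{b}=\bar{a}]_M$. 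Together these yield equality, and the ``Particularly'' clause is immediate from it.

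\textbf{Main obstacle.} The delicate point is extents. The interpretation $t^M(\bar{a})$ is defined in \cite{BMPI} with restrictions to $Et\land E\bar{a}$, and one must check that the restriction conventions for constants and function applications do not break the term-level inequality. This is why the hypothesis $\bar{a},\bar{b}\in{^n|M\on Et\land Es|}$ is imposed. I would handle it by first noting that under this hypothesis all restrictions by $Et$ and $Es$ are vacuous on $\bar{a}$ and $\bar{b}$, and then running the induction with the explicit restriction clauses from \cite{BMPI}, Definition (2.20) and its preceding term definitions.
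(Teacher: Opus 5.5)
Your argument is correct. The paper gives no proof of its own here (the lemma is simply imported from \cite{BMPI}, Lemma (2.25)), and your route is the natural way to prove it: first a term induction giving $[\bar{a}=\bar{b}]_M\le[t^M(\bar{a})=t^M(\bar{b})]_M$, using that each $f^M$ is a presheaf morphism, and then symmetry and transitivity of $[\cdot=\cdot]_M$, which is the property the paper invokes elsewhere as \cite{BMPI}, Lemma (1.17).

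The one point to tidy is the variable case. If variables are evaluated at $\bar{a}$ as $a_i\on E\bar{a}$ rather than as $a_i$, that case is not literally trivial. It still follows, exactly as in your constant case, from $[x\on p=y\on q]_M=[x=y]_M\land p\land q$ together with $[\bar{a}=\bar{b}]_M\le E\bar{a}\land E\bar{b}$.
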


\begin{theorem}\label{unnesting_pres_value_atom_fml}
Let $\phi(\bar{v})$ be an atomic formula with $\bar{v} = \tup{v_0,
  \dots, v_{n-1}}$. Let $M \in pSh(\Omega,L)$ and $\bar{a} =\tup{a_0,
  \dots, a_{n-1}} \in |M|$. Then $[\phi(\bar{a})]_M =
[\phi^\exists(\bar{a})]_M$.
\end{theorem}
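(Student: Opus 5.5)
We argue by induction on the total term complexity of $\phi$, i.e.~on $\sum_i r(t_i)$ over the terms $t_i$ occurring in $\phi$. If $\phi$ is already unnested there is nothing to prove, since $\phi^\exists=\phi$. The substantive cases are $c=d$, $f(t_0,\dots,t_{n-1})=t_n$ (and its mirror $t_n=f(\bar t)$), and $R(t_0,\dots,t_{n-1})$ with some $t_i$ not a variable. In each case $\phi^\exists$ has the form $\exists v_{i_0}\dots\exists v_{i_m}\,(\bigwedge_j (v_{i_j}=t_{i_j})^\exists \wedge \theta(\bar v'))$, where $\theta$ is unnested. Each $v_{i_j}=t_{i_j}$ is an atomic formula of strictly smaller complexity than $\phi$, with the single exception that when $t_{i_j}$ is a constant it is already unnested. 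So by the inductive hypothesis, together with the recursive clauses for $\wedge$ and $\exists$ in \cite{BMPI}, Definition (2.20), we may replace each $(v_{i_j}=t_{i_j})^\exists$ by $v_{i_j}=t_{i_j}$ inside the evaluation. This reduces the claim to showing
\[ [\phi(\bar a)]_M \;=\; E\bar a\wedge E\phi \wedge \Vee_{\bar e}\Bigl(\Wedge_j [e_j = t_{i_j}^M(\bar a)]_M \wedge [\theta(\bar a,\bar e)]_M\Bigr), \]
where $\bar e$ ranges over ${}^{m+1}|M|$. The extent prefactors come from the interpretation of $\exists$ and conjunction, and the point to check is that they coincide on both sides: the extent of $\phi^\exists$ is the meet of the extents of the constants in $\phi$, exactly as for $\phi$.

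For ``$\ge$'' we use Lemma (\ref{char_funct}) in its generalised substitutivity form. For each $\bar e$ we have $[\bar e = \bar t^M(\bar a)]_M \wedge [\theta(\bar a,\bar e)]_M \le [\theta(\bar a,\bar t^M(\bar a))]_M = [\phi(\bar a)]_M$. The final equality holds because the interpretation of a nested atomic formula is defined (\cite{BMPI}, \S{}2) by evaluating the terms and then the unnested relation or equality on the resulting sections. Lemma (\ref{char_funct}) is stated for equalities of terms. For $R$ we need the analogous fact $[\bar a=\bar b]\wedge[R(\bar a)]\le[R(\bar b)]$, which is part of the definition of an $L$-presheaf structure (the interpretation of $R$ is extensional). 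Taking the join over $\bar e$ and meeting with the extent terms then gives ``$\ge$''.

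For ``$\le$'' we choose the witness $\bar e = \tup{t_{i_j}^M(\bar a)}_{j\le m}$ itself. These are genuine sections of $M$, with extent $E\bar a\wedge Et_{i_j}$. Then $[e_j=t_{i_j}^M(\bar a)]_M = Ee_j \ge E\bar a\wedge E\phi$, and $[\theta(\bar a,\bar e)]_M=[\phi(\bar a)]_M$ by the same definitional unfolding. Hence the single term of the join indexed by this $\bar e$ already dominates $[\phi(\bar a)]_M$, using Lemma (\ref{fn_range_bounded_tuples})-style boundedness, namely $[\phi(\bar a)]_M\le E\bar a\wedge E\phi$. The case $c=d$ is the special instance with $\theta$ being $v_0=d$, $e_0=\mathfrak c^M$, and $[\mathfrak c^M=\mathfrak c^M]_M=E_Cc$.

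The main obstacle I expect is bookkeeping of extents. We must verify that the restriction to $E\bar a\wedge E\phi$ imposed by the clauses for $\exists$ and $\wedge$ in \cite{BMPI} matches exactly the extent built into $[\phi(\bar a)]_M$, particularly when constants of extent below $\top$ occur in the $t_i$. We must also ensure that the extents of the evaluated term sections are computed correctly, as the paper stresses in its remarks on infelicities in earlier treatments. I would handle this by first isolating a small lemma stating that $E(t^M(\bar a)) = E\bar a\wedge Et$ and that every atomic value is bounded by it, and only then running the two inequalities.
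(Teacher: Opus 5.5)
Your proposal follows the paper's proof essentially step for step. You strip the inner unnestings $(v_{i_j}=t_{i_j})^\exists$ with the inductive hypothesis, bound $[\phi^\exists(\bar a)]_M$ above by $[\phi(\bar a)]_M$ via Lemma (\ref{char_funct}) (or the extensionality of $R^M$, or transitivity of $[\cdot=\cdot]_M$ for $c=d$), and bound it below by using the evaluated terms themselves as witnesses; your single uniform scheme merely replaces the paper's three separate cases.

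One small repair is needed: your measure $\sum_i r(t_i)$ does not strictly drop in the relation case (compare $R(g(h(v)))$ with the still-nested $v_0=g(h(v))$, both of measure $2$). Induct instead on the modified rank $r(\phi)$ of the atomic formula. This does strictly drop, since $r(v=t)=r(t)-1$ for non-variable $t$, while $r(R(\bar t))=\sum_i r(t_i)$ and $r(f(\bar t)=t_n)=\sum_{i\le n} r(t_i)$.
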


\begin{proof}
It is immediate the theorem holds for the unnested atomic formulae since for these formulae $\phi^\exists =\phi$.. 
We  show remainder of the theorem by induction on complexity of $L$-terms appearing in atomic formulae.
Case 1 is a base case, along with the theorem for unnested formulae, cases 2 and 3 are induction steps. 

Case 1: Let $\phi(\bar v)$ be ``$c=d$'' with $c,d \in |C|$ constant symbols. Let $M \in pSh(\Omega,L)$ and $\bar{a} = \tup{a_0, \dots, a_{n-1}} \in {^n|M|}$.
By \cite{BMPI}, Definition (2.14), we have $E^Cc = E^M \mathfrak{c}^M$ and $E^Cd =E^M \mathfrak{d}^M$. Set $p= E^Cc \wedge E^C d$.

It will be useful to note $E(\mathfrak{c}^M\on E(\bar{a} \on p))  = E(\bar{a}\on p)$, which is so since
\[ E(\mathfrak{c}^M\on E(\bar{a} \on p)) = E\mathfrak{c}^M\land E\bar{a} \land p = E\bar{a} \land p = E(\bar{a}\on p), \]
where the first and third equalities come from applications of the interaction between extents and restrictions in presheaves (\cite{BMPI}, Definition (1.13)),
and the second equality comes from the fact that $p\le E\mathfrak{c}^M$ by the definition of $p$.

We have 
\begin{eqn}\begin{split} [\phi(\bar a)]_M = & \sss [(c=d)(\bar a)]_M = [c^M(\bar a \on p) = d^M(\bar a \on p)]_M  \\ = & \sss
    [\mathfrak{c}^M\on E(\bar a \on p) 
      =  \mathfrak{d}^M \on E(\bar a \on p)]_M ,
  \end{split}
\end{eqn}where the second equality comes from \cite{BMPI}, Definition (2.23), the third from  \cite{BMPI}, Definition (2.17).
We also have
\begin{eqn}\begin{split}  [\mathfrak{c}^M\on E(\bar a \on p) =  \mathfrak{d}^M \on E(\bar a \on p)]_M \le E(\mathfrak{c}^M \on E(\bar{a} \on p)) = E(\bar{a}\on p),
  \end{split}
\end{eqn}{}where the inequality comes from \cite{BMPI}, Lemma (1.16) and the equality is as noted above. 

Thus
\[ [\phi(\bar{a})]_M = [\mathfrak{c}^M\on E(\bar{a} \on p) =  \mathfrak{d}^M \on E(\bar{a} \on p)]_M \land E(\bar{a}\on p).\]

On the other hand, $ \phi^\exists (\bar v)$ is $ \exists x (x=c \wedge x=d).$ Thus \vskip6pt
\begin{eqn}\begin{split} [\phi^\exists (\bar{a} &)]_M  = [\exists x (x=c \wedge x=d) (\bar{a})]_M = \bigvee_{s\in |M|}  [ (s=c \land s=d)(\bar{a})]_M  \\
    & = \bigvee_{s\in |M|}  E\bar{a}\land p \wedge [ (s=c)(\bar{a})]_M \wedge [ (s=d)(\bar{a})]_M  \\
    & =  E(\bar{a}\on p) \wedge \bigvee_{s\in |M|} ( [ s \on p=\mathfrak{c}^M\on E(\bar{a} \on p) ]_M \wedge [ s \on p=\mathfrak{d}^M \on E(\bar{a} \on p)]_M ),
    \end{split}
\end{eqn}where the second and third equalities comes from two separate clauses of \cite{BMPI}, Definition (2.23), the fourth from \cite{BMPI}, Definition (2.17).

By \cite{BMPI}, Lemma (1.17), which follows from the definition of $[.=.]_M$, for each $s\in |M|$ we have 
\begin{eqn}\begin{split} [ (s \on p=\mathfrak{c}^M\on E(\bar{a} \on p) ]_M \wedge [ (s \on p=\mathfrak{d}^M & \on E(\bar{a} \on p)]_M \\
    & \le [\mathfrak{c}^M\on E(\bar{a} \on p) = \mathfrak{d}^M \on E(\bar{a} \on p)]_M. 
  \end{split}
\end{eqn}

Thus \[ [\phi^\exists (\bar{a})]_M \le E(\bar{a}\on p) \land [\mathfrak{c}^M\on E(\bar{a} \on p) = \mathfrak{d}^M \on E(\bar{a} \on p)]_M, \sss\sss = [\phi(\bar{a})]_M .\]

However, $\mathfrak{c}^M \on E(\bar a \on p) \in |M|$ and so is one of the sections $s$ over which the supremum to which we showed $ [\phi^\exists (\bar{a})]_M$ is equal is taken.

Thus \begin{eqn}\begin{split} [\phi^\exists (\bar{a})]_M & \ge  E(\bar{a}\on p) \land [ \mathfrak{c}^M\on E(\bar{a} \on p)  =\mathfrak{c}^M\on E(\bar{a} \on p) ]_M \\
    &\sss \qquad \qquad \qquad \wedge [ \mathfrak{c}^M\on E(\bar{a} \on p)   = \mathfrak{d}^M \on E(\bar{a} \on p)]_M \\
    &  = \sss E(\bar{a}\on p) \land E(\mathfrak{c}^M\on E(\bar{a} \on p)) \land [ \mathfrak{c}^M\on E(\bar{a} \on p)  \on p=\mathfrak{d}^M \on E(\bar{a} \on p)]_M \\
    &  = \sss E(\bar{a}\on p) \land [ \mathfrak{c}^M\on E(\bar{a} \on p)  \on p=\mathfrak{d}^M \on E(\bar{a} \on p)]_M = [\phi(\bar{a})]_M, 
  \end{split}
\end{eqn}where the penultimate equality holds since as noted at the start of the proof for this case,
$E(\mathfrak{c}^M\on E(\bar{a} \on p)) = E(\bar{a}\on p)$.

Case 2: Let $\phi(\bar{v})$ be ``$ t_m (\bar{v}) =f(t_0(\bar{v}), \dots, t_{m-1}(\bar{v}))$'' where for $j \le m$ we have $t_j (\bar{v})$ are $L$-terms which are
not all variable symbols and $f$ is an $m$-ary function symbol. Let 
$M \in pSh(\Omega,L)$ and $\bar{a}=\tup{a_0, \dots, a_{n-1}} \in {^n|M|}$. Let $p = E\phi = \bigwedge_{j \le m} Et_j$.

Firstly, by \cite{BMPI}, Definitions (2.23) and (2.17), 
\begin{eqn}\begin{split}
    [\phi(\bar{a})]_M =  & \sss [t_m^M (\bar{a} \on p) = f^M( t_0^M (\bar{a} \on p), \dots, t_{m-1}^M (\bar{a} \on p))]_M = \\ & \sss E \bar{a}\on p \wedge [t_m^M (\bar{a} \on p) = f^M( t_0^M (\bar{a} \on p), \dots, t_{m-1}^M (\bar{a} \on p))]_M. 
  \end{split}
\end{eqn}

On the other hand, by definition, we have  $\phi^\exists (\bar{v})$ is
\[ \exists x_0 \dots \exists x_m ((x_0= t_0(\bar{v}))^\exists \wedge \dots \wedge (x_m = t_m (\bar{v}))^\exists \wedge x_m=f(x_0, \dots x_{m-1})). \] 
By induction hypothesis, for each $j<m$, we have
\[ [(x_j = t_j (\bar{v}))^\exists (\bar{a})]_M = [x_j = t_j (\bar{v}) (\bar{a})]_M . \] 

Using  \cite{BMPI}, Definitions (2.23) and (2.17) and the induction hypothesis,
\begin{eqn}\begin{split}
    [\phi^\exists (\bar{a})]_M  = & \sss\bigvee_{s_0 \in |M|} \dots \bigvee_{s_m \in |M|} ( E\bar{a} \on p \wedge [s_0 \on p= t_0^M(\bar{a} \on p)] \wedge \dots  \\
    & \sss\qquad\wedge [s_m \on p = t_m^M (\bar{a} \on p)] \wedge [s_m \on p=f^M(s_0 \on p, \dots, s_{m-1} \on p]_M)   \\
    \le  & \sss  E\bar{a} \on p \wedge [t_m^M (\bar{a} \on p) =  f^M( t_0^M (\bar{a} \on p), \dots, t_{m-1}^M (\bar{a} \on p))]_M = [\phi(\bar{a})]_M,
  \end{split}
\end{eqn} where the inequality holds  by an application of Lemma (\ref{char_funct}).

However, the tuple $\tupof{t_j^M(\bar{a} \on p)}{j\le m}$ is one of the tuples of sections $\tupof{s_j}{j\le m} \in {^{m+1}|M|}$ 
over which the $m+1$-fold supremum to which we showed $ [\phi^\exists (\bar{a})]_M$ is equal is taken.

Thus \begin{eqn}\begin{split} [\phi(\bar{a})]_M = & \sss E \bar{a}\on p \wedge [t_m^M (\bar{a} \on p) = f^M( t_0^M (\bar{a} \on p), \dots, t_{m-1}^M (\bar{a} \on p))]_M \\
    = & \sss E \bar{a}\on p \wedge Et_0^M (\bar{a} \on p) \wedge \dots \wedge  Et_{m}^M (\bar{a} \on p)  \wedge \\
    & \sss \quad[t_m^M (\bar{a} \on p) = f^M( t_0^M (\bar{a} \on p), \dots, t_{m-1}^M (\bar{a} \on p))]_M \\
    \le &  \sss [\phi^\exists (\bar{a})]_M,
  \end{split}
\end{eqn}
since for each $j\le m$ we have $E\bar{a}\on p \le Et_j^M(\bar{a}\on p)$.

Case 3: Let $\phi(\bar{v})$ be $R(t_0(\bar{v}), \dots, t_{m-1}(\bar{v}))$ where for $j < m$ the $t_j (\bar{v})$ are $L$-terms, not being all variable symbols and $R$ is an
$m$-ary relation symbol. Let $M \in pSh(\Omega,L)$ and  $\bar{a}=\tup{a_0, \dots, a_{n-1}} \in {^m|M|}$. Let $p = E\phi = \bigwedge_{j \le m-1} Et_j$.

Firstly, again by \cite{BMPI}, Definitions (2.23) and (2.17), we have 
\begin{eqn}\begin{split}[\phi(\bar{a})]_M = \sss & [R^M(t_0^M (\bar{a} \on p), \dots, t_{m-1}^M (\bar{a} \on p))]_M \\
    = \sss & E \bar{a}\on p \wedge [R^M(t_0^M (\bar{a} \on p), \dots, t_{m-1}^M (\bar{a} \on p))]_M .
  \end{split}
\end{eqn}

On the other hand, by definition, $\phi^\exists (\bar{v})$ is
\[  \exists x_0 \dots \exists x_{m-1} ((x_0=   t_0(\bar{v}))^\exists \wedge \dots \wedge (x_{m-1} = t_{m-1} (\bar{v}))^\exists \wedge R(x_0, \dots, x_{m-1})).\]
For $j < m$, by induction,
$[(x_j = t_j (\bar{v}))^\exists (\bar{a})]_M = [x_j = t_j (\bar{v}) (\bar{a})]_M$.
Again, using \cite{BMPI}, Definitions (2.23) and (2.17) and the induction hypothesis, 
\begin{eqn}\begin{split}
    [\phi^\exists (\bar{a})]_M = & \bigvee_{s_0 \in |M|} \dots \bigvee_{s_{m-1} \in |M|} ( E\bar{a} \on p \wedge [s_0 \on p= t_0^M(\bar{a} \on p)] \wedge \dots \\ &
    \qquad \wedge [s_{m-1} \on p = t_{m-1}^M (\bar{a} \on p)] \wedge [R^M(s_0 \on p, \dots, s_{m-1} \on p)]_M) \\ &
    \le \sss E\bar{a} \on p \wedge [R^M (\bar{a} \on p), \dots, t_{m-1}^M (\bar{a} \on p))]_M = [\phi(\bar{a})]_M ,
  \end{split}
\end{eqn}with inequality holding by one of the defining properties of characteristic functions, cf., \cite{BMPI}, Definition (2.1).

However, the tuple $\tupof{t_j^M(\bar{a} \on p)}{j< m}$ is one of the tuples of sections $\tupof{s_j}{j<m} \in {^m|M|}$ 
over which the $m$-fold supremum to which we showed $ [\phi^\exists (\bar{a})]_M$ is equal is taken.

Thus,
\begin{eqn}\begin{split} [\phi(\bar{a})]_M = & \sss E \bar{a}\on p \wedge [R^M( t_0^M (\bar{a} \on p), \dots, t_{m-1}^M (\bar{a} \on p))]_M \\
    = & \sss E \bar{a}\on p \wedge Et_0^M (\bar{a} \on p) \wedge \dots \wedge  Et_{m-1}^M (\bar{a} \on p) \\ & \qquad \qquad \wedge [R^M( t_0^M (\bar{a} \on p), \dots, t_{m-1}^M (\bar{a} \on p))]_M \\
    \le & \sss [\phi^\exists (\bar{a})]_M ,
  \end{split}
\end{eqn}
since for each $j< m$ we have $E\bar{a}\on p \le Et_j^M(\bar{a}\on p)$.
\end{proof}

\begin{corollary}\label{evaluation_persists_through_unnesting} If $\psi$ is an $L$-formula,
  $M\in \pshol$ and $\bar{a}\in {^n|M|}$ then
  \[ [\psi(\bar{a})]_M = [\psi^\exists(\bar{a})]_M.\]
\end{corollary}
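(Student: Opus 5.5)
The plan is induction on the construction of $\psi$ as an $L$-formula, proving simultaneously for every $M\in\pshol$ and every tuple $\bar{a}$ of the right length that $[\psi(\bar{a})]_M = [\psi^\exists(\bar{a})]_M$. The base case, $\psi$ atomic, is precisely Theorem (\ref{unnesting_pres_value_atom_fml}).

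For the inductive steps I would use that the operation $\psi\mapsto\psi^\exists$ commutes with every formula-forming operation. This is immediate from the definition, since $\psi^\exists$ is obtained by replacing each atomic subformula separately. Concretely:
\[
(\neg\chi)^\exists=\neg(\chi^\exists),\qquad
(\chi\to\theta)^\exists=\chi^\exists\to\theta^\exists,\qquad
(\Wedge\Phi)^\exists=\Wedge\setof{\chi^\exists}{\chi\in\Phi},
\]
and similarly for $\Vee$, $\forall V$ and $\exists V$. The interpretation $[\,\cdot\,]_M$ is defined recursively (\cite{BMPI}, Definition (2.20)/(2.23)). At each connective or quantifier it is computed from the extents $E\bar{a}$ and $E\psi$ together with the values of the immediate subformulae at the relevant instantiations. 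For example:
\begin{itemize}
\item for a quantifier, $[\exists V\chi(\bar{a})]_M$ is a supremum over $\bar{s}\in{^{V}|M|}$ of $[\chi(\bar{a},\bar{s})]_M$, meeted with extents;
\item $\neg$ and $\to$ are computed via Heyting implication, relativised to the appropriate extent.
\end{itemize}
So once the values of the subformulae agree at all instantiations, which is the inductive hypothesis applied to the tuples $\bar{a}\bar{s}$, the values of the compound formulae agree.

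The one point needing care, and the main obstacle, is that the extent data appearing in the recursive clauses must coincide for $\psi$ and $\psi^\exists$. In particular, in the interpretation the tuple is restricted to $E\psi$, the meet of the extents of the constants occurring in $\psi$. I would check directly that $E\psi^\exists=E\psi$: unnesting keeps every constant symbol of $\psi$ (in the clauses $v=c$) and introduces no new constants. The alternative is to note that for atomic $\phi$, Theorem (\ref{unnesting_pres_value_atom_fml}) already gives equality of values including the factor $E\bar{a}\on p$, so nothing is lost.

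Given that, the \emph{relativisation} of the Heyting operations to extents is identical on both sides, and the induction goes through clause by clause. The infinitary clauses $\Wedge,\Vee$ need no distributivity, only pointwise equality of the conjuncts or disjuncts.
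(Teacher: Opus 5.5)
Your proposal is correct and matches the paper's proof, which is just induction along the recursive definition of the interpretation, with Theorem (\ref{unnesting_pres_value_atom_fml}) supplying the atomic case. Your explicit check that $E\psi^\exists=E\psi$ (unnesting neither adds nor removes constant symbols) is what makes the $\neg$, $\to$ and $\forall$ clauses agree. Rely on that check rather than on your suggested alternative: equality of the atomic values alone does not determine the extent factor $E\phi$ that appears in those clauses.
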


\begin{proof} By the inductive definition of the interpretation of formulae.
\end{proof}

\section{Additional connectives}\label{additional_connectives}

In this section we discuss certain extensions of the background
apparatus of our languages through adding connectives corresponding to
some or all elements of $\Omega$.  As we will see there are, in fact,
two separate, but equivalent, ways of extending by new connectives: by
additional nullary connectives or by additional unary connectives. We
explain these extensions and why they are conservative extensions of
the background apparatus of our languages.

We first of all consider introducing additional unary connectives. In
the theory of sheaves of models such extensions were used originally
to obtain results concerning the method of diagrams (\cite{BM14}) and
back-and-forth arguments (\cite{SC}, \cite{B00}).

In the context of the method of diagrams, making use of
such auxiliary connectives is arguably aesthetically pleasing. For
example, as we show at the close of this section, it allows one to recast \cite{BMPI}, Proposition
(\ref{graded_m-o-d_thm}) and Theorem (\ref{thm_m-o-diagram_equivs}) in
formats which superficially closely resemble the original results of
Robinson, with ``$\thinks$'' replaced by ``$\forces$''.  However, as
shown in \cite{BMPI}, for a satisfactory method of diagrams these extensions are not necessary.

Notwithstanding the last remark, we found it necessary to add these unary connectives
to prove the central results of the remaining sections, in particular to prove anything resembling
Proposition (\ref{prop_equiv_implies_equality}), that equivalence (for ceu formulae) up to a specified quantifier rank
between two pairs constisting of a model and a tuple from it implies the derived functions are equal. We comment further on
this in Question (\ref{question_can_we_avoid_the_squaresubs}) and the discussion after it.


\begin{definition}\label{squaresub_value_of_fml} (\emph{cf.}~\cite{BM14}, Definition (5.1)). 
  Let $p\in\Omega$, $M$ be an $L$-structure in $\psho$, $\bar{a}\in [|M|]^n$
  and $\phi(\bar{v})$ an $L$-formula with $\lh(\bar{v})=n$. Following \cite{BM14},
  write $\squaresub{p} [\phi(\bar{a})]_M$ \emph{as an abbreviation for} the element
  $E\bar{a}\land E\phi \land (p \leftrightarrow [\phi(\bar{a})]_M)$ of $\Omega$.
   
  In practice we are often able to assume that all of the constant symbols occuring in $\phi$ are interpreted by elements of $\bar{a}$, so that $E\bar{a}\le E\phi$.
  When this happens $\squaresub{p} [\phi(\bar{a})]_M$ thus abbreviates $  E\bar{a}\land (p \leftrightarrow [\phi(\bar{a})]_M)$.
\end{definition}

    We give a simple, but useful, lemma.

 \begin{lemma}\label{box_top_is_id}
 Let $L$ be a first order language with equality over $\Omega$,
  $M$ an $L$-struture, $\bar{a}\in |M|^n$ and $\phi$ an $L$-formula. Then 
 $\squaresub{\top}[\phi(\bar{a})]_M = [\phi(\bar{a})]_M$ and
 $\squaresub{\bot}[\phi(\bar{a})]_M = [\neg\phi(\bar{a})]_M$.
\end{lemma}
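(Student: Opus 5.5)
The plan is to unfold the abbreviation of Definition (\ref{squaresub_value_of_fml}) and compute in the complete Heyting algebra $\Omega$. By definition
\[ \squaresub{p}[\phi(\bar{a})]_M = E\bar{a}\land E\phi \land (p \leftrightarrow [\phi(\bar{a})]_M), \]
where $x\leftrightarrow y$ abbreviates $(x\to y)\land(y\to x)$. Before specialising $p$, I will record one fact from the interpretation of formulae (\cite{BMPI}, Definitions (2.20), (2.23)): every value $[\phi(\bar{a})]_M$ is bounded above by $E\bar{a}\land E\phi$. For atomic formulae this follows from the clauses that meet with $E\bar{a}\on p$, as seen in the proof of Theorem (\ref{unnesting_pres_value_atom_fml}). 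For compound formulae it follows by a routine induction, because each clause of the recursive definition is again met with $E\bar{a}\land E\phi$. I will also need the clause for negation, which I take to be $[\neg\phi(\bar{a})]_M = E\bar{a}\land E\phi\land([\phi(\bar{a})]_M\to\bot)$. This is the standard relativised pseudocomplement used in \cite{BMPI}.

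For $p=\top$, write $x=[\phi(\bar{a})]_M$ and $e=E\bar{a}\land E\phi$. In any Heyting algebra $\top\to x = x$ and $x\to\top=\top$, so $\top\leftrightarrow x = x$. Hence $\squaresub{\top}[\phi(\bar{a})]_M = e\land x = x$, using $x\le e$.

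For $p=\bot$, we have $\bot\to x=\top$ and $x\to\bot=\neg x$, so $\bot\leftrightarrow x=\neg x$. Hence $\squaresub{\bot}[\phi(\bar{a})]_M = e\land(x\to\bot)$, which is exactly the negation clause, namely $[\neg\phi(\bar{a})]_M$.

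The only step that could cause trouble is fixing the precise form of the negation clause and the bound $x\le e$ in \cite{BMPI}. If negation there is interpreted as $E\bar{a}\land E\phi\land(x\to\bot)$, the argument goes through verbatim. If it is interpreted as $x\to(E\bar{a}\land E\phi)$-relative pseudocomplement, that is, as $e\land(x\to\bot)$ rewritten via $e\land(x\to\bot)=e\land(x\to(e\land\bot))$, I would add a one-line Heyting identity to reconcile the two forms. Everything else is pure Heyting algebra.
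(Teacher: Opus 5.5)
Your proposal is correct and is the paper's own argument: unfold the abbreviation, use $\top\leftrightarrow x = x$ and $\bot\leftrightarrow x = \neg x$ in $\Omega$, then absorb $E\bar{a}\land E\phi$ via the bound $[\phi(\bar{a})]_M\le E\bar{a}\land E\phi$ in the $\top$ case (the paper uses this bound without comment) and recognise the negation clause $E\bar{a}\land E\phi\land\neg[\phi(\bar{a})]_M$ in the $\bot$ case. Your closing hedge about an alternative form of negation is unnecessary, since $x\to(e\land\bot)=x\to\bot$ trivially.
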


\begin{proof} We have \begin{eqn}\begin{split}
  \squaresub{\top}[\phi(\bar{a})]_M =  & \sss\sss E\bar{a}\land E\phi \land (\top \leftrightarrow [\phi(\bar{a})]_M) \\
 = & \sss\sss E\bar{a}\land E\phi \land [\phi(\bar{a})]_M = [\phi(\bar{a})]_M\hbox{, and, similarly,}\\
   \squaresub{\bot}[\phi(\bar{a})]_M =  &\sss\sss  E\bar{a}\land E\phi \land (\bot \leftrightarrow [\phi(\bar{a})]_M) \\
  = & \sss\sss E\bar{a}\land E\phi \land \neg[\phi(\bar{a})]_M = [\neg\phi(\bar{a})]_M.
    \end{split}
  \end{eqn}
\end{proof}

\begin{definition}\label{defn_forces_squaresub_as_abbrev} Let $p\in\Omega$, $M$ be an $L$-structure in $\psho$, $\bar{a}\in [|M|]^n$
  and $\phi(\bar{v})$ an $L$-formula with $\lh(\bar{v})=n$. 
  We go beyond \cite{BM14} and define $M\forces \squaresub{p}\phi(\bar{a})$ to be \emph{an abbreviation for}
  \[ E\bar{a}\land E\phi \le ( p \leftrightarrow [\phi(\bar{a})]_M) ,\] or, equivalently, by \cite{BMPI}, Lemma (\ref{char_equiv_in_Omega}),
  \[ E\bar{a}\land p = [\phi(\bar{a})]_M ,\] or, equivalently, 
      \[ E\bar{a}\land E\phi \land (p \leftrightarrow [\phi(\bar{a})]_M) = E\bar{a}\land E\phi.\]

  Notice that while, as shown in \cite{BMPI}, Lemma
  (\ref{interp_of_fmla_bounded_by_extents_of_fmla_and_params}),
  $[\phi(\bar{a})]_M\le E\bar{a}\land E\phi$, it is not necessarily
  the case that $( p \leftrightarrow [\phi(\bar{a})]_M) \le
  E\bar{a}\land E\phi $.
\end{definition}

  At this point \cite{BM14} extends $L$ by introducing for each $p\in \Omega$ a unary connective, which they also denote by $\squaresub{p}$. 

  \begin{definition}\label{defn_squaresub_p}
  Define $L^{\squ} = L \cup \setof{\squaresub{p}}{p\in \Omega}$, where for $p\in \Omega$ we have
  $\squaresub{p}$ is a unary connective and formula formation is extended by:
  for every $p\in \Omega$, if $\phi\in L^{\squ}$ then $\squaresub{p}\phi\in L^{\squ}$.
  It is clear that $L^{\squ}$ and $L$ have the same notions of terms and that
  $M$ is an $L^{\squ}$-structure in $\psho$ if and only if $M$ is an $L$-structure in $\psho$.

  The notions of extent and value of $L^{\squ}$ formulae are given as follows.
  If $\phi$ is an $L^{\squ}$-formula and $p\in \Omega$ then $E\squaresub{p}\phi = E\phi$.
  If $M$ is an $L$-structure in $\psho$, $\bar{a}\in |M|^n$ and $\phi(\bar{v})$ is an $L^{\squ}$-formula
  then \[
  [\squaresub{p}\phi(\bar{a})]_M =  \squaresub{p}[\phi(\bar{a})]_M \;\; ( =
  E(\bar{a}) \land E\phi \wedge ([\phi(\bar{a})]_M \leftrightarrow p) ). \]

  Extending \cite{BMPI}, Definition (\ref{defn_forces}), define
  \[ M\forces \squaresub{p}\phi(\bar{a})\hbox{ if }[\squaresub{p}\phi(\bar{a})]_M = E\squaresub{p}\phi(\bar{a})  \;\; (= E\phi\land E\bar{a} ) .\]
 This appears to be ambiguous as we have already defined $M\forces \squaresub{p}\phi(\bar{a})$, when using $\squaresub{p}\phi(\bar{a})$
  as an abbreviation for an element of $\Omega$, in Definition (\ref{defn_forces_squaresub_as_abbrev}). However, in both uses we have
  \[ M\forces \squaresub{p}\phi(\bar{a})\hbox{ if and only if }E\phi\land E\bar{a} \le ([\phi(\bar{a})]_M \leftrightarrow p),\] so the ambiguity causes no problems
  when employing ``$M\forces \squaresub{p}\phi(\bar{a})$'' in arguments.
 \end{definition}

  \begin{definition} An $L^{\squ}$ formula is a \emph{ceu formula} if it belongs to the
  closure of the unnested atomic formulae under $\Wedge$, $\exists$ and the
  $\squaresub{p}$ for $p\in \Omega$.  
\end{definition}

The results on unnesting from the previous sections extend to
${\mathcal L}^{\squ}$.

\begin{definition} If $\psi$ is an arbitrary ${\mathcal L}^{\squ}$-formula
  let $\psi^\exists$ be the formula derived from $\psi$ by for
  each atomic subformula $\phi$ replacing $\phi$ by $\phi^\exists$.
\end{definition}

\begin{definition} Let $\alpha$ be an ordinal.
  $L^{\alpha,\squ}$ is the collection of $L^{\squ}$-formulae $\phi$
  such that $r(\phi)\le \alpha$.
\end{definition}

\begin{proposition}\label{rank=degree_of_ext_squ}
If $\psi$ is an arbitrary ${\mathcal L}^{\squ}$-formula then $r(\psi)=d(\psi^\exists)$.
\end{proposition}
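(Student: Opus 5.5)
The claim is that $r(\psi)=d(\psi^\exists)$ for every $L^{\squ}$-formula $\psi$. I would prove it by induction on the construction of $\psi$ as an $L^{\squ}$-formula, reusing the case analysis behind Proposition (\ref{rank=degree_of_ext}) for the $L$-connectives. The only new ingredient is the clause for the unary connectives $\squaresub{p}$. Since Definitions (\ref{quantifier_degree}) and (\ref{modified_rank}) are stated only for $L$, I first fix the natural convention for $L^{\squ}$: $d(\squaresub{p}\phi)=d(\phi)$ and $r(\squaresub{p}\phi)=r(\phi)$. This mirrors the treatment of $\neg$ and is the reading under which $L^{\alpha,\squ}$ is meaningful. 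With that convention the induction is uniform.

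\emph{Base case: $\psi$ atomic.} If $\psi$ is unnested, then $\psi^\exists=\psi$ and both ranks are $0$. Otherwise I argue by a sub-induction on term complexity, following the recursion in Definition (\ref{defn_phi^exists}).

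For $c=d$, the unnesting is $\exists v_0(v_0=c\land v_0=d)$. Its degree is $1$, and $r(c=d)=1+1-1=1$.

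For $f(t_0,\dots,t_{n-1})=t_n$ or $R(t_0,\dots,t_{n-1})$, the unnesting introduces one existential quantifier block for each non-variable $t_i$, wrapped around the unnestings $(v_{i_j}=t_{i_j})^\exists$. So I must check that the degree added by these quantifiers and the degrees $d((v=t)^\exists)=r(v=t)=r(t)-1$ (by the sub-induction) combine to give exactly $r(\psi)$. This is the step I expect to be the real obstacle. The counting depends on whether the block $\exists v_{i_0}\dots\exists v_{i_m}$ counts as one quantifier step or as several. It also depends on how the "$-1$" in the equality clause of the modified rank offsets the extra $\exists$ produced when a term $t_i$ is itself a function application or a constant.

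My plan here is to verify the identity $d((v=t)^\exists)=r(t)-1$ (for $t$ not a variable) separately for constants and for $t=f(\bar s)$. Summing over the arguments then yields $r(\psi)$. Since Proposition (\ref{rank=degree_of_ext}) is asserted for $L$, I may take this atomic computation as already established there and simply cite it.

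\emph{Inductive steps.} The unnesting operation commutes with every connective, because it acts only on atomic subformulae. Hence $(\neg\phi)^\exists=\neg\phi^\exists$, $(\Wedge\Phi)^\exists=\Wedge\setof{\phi^\exists}{\phi\in\Phi}$, $(\exists V\phi)^\exists=\exists V\phi^\exists$, and similarly for $\Vee$, $\to$ and $\forall$. In each case $r$ and $d$ obey the same recursion (identity, maximum, supremum, or $+1$), so the induction hypothesis transfers immediately.

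The new case is $(\squaresub{p}\phi)^\exists=\squaresub{p}(\phi^\exists)$. Under the adopted convention,
\[ d((\squaresub{p}\phi)^\exists)=d(\phi^\exists)=r(\phi)=r(\squaresub{p}\phi). \]
This completes the induction.
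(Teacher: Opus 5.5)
Your reduction is the same as the paper's, whose proof simply cites Proposition~(\ref{rank=degree_of_ext}) and Definition~(\ref{defn_squaresub_p}). Your clauses for the connectives, for the quantifiers and for $\squaresub{p}$ (with $r$ and $d$ transparent to $\squaresub{p}$, as for $\neg$) are fine.

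The gap is the atomic step you flagged, and it cannot be closed, either by your counting or by citing Proposition~(\ref{rank=degree_of_ext}). For the unnesting of Definition~(\ref{defn_phi^exists}), the identity $d((v=t)^\exists)=r(t)-1$ you planned to verify is false, and so is the equality $r(\psi)=d(\psi^\exists)$. The unnesting places the sub-unnestings $(v_{i_j}=t_{i_j})^\exists$ side by side as conjuncts under one block of existentials, and $d$ of a conjunction is a supremum, not a sum. So if $k\ge 1$ of the arguments are not variables, then $d(\phi^\exists)=k+\max_j d((v_{i_j}=t_{i_j})^\exists)$, whereas $r$ adds up the ranks of all the arguments. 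For instance, take $R$ binary and $g$ unary, and let $\psi$ be $R(g(g(v_0)),g(g(v_1)))$. Then $r(\psi)=2+2=4$, but
\[ \psi^\exists=\exists x_0\,\exists x_1\bigl(\exists y_0\,(y_0=g(v_0)\land x_0=g(y_0))\land\exists y_1\,(y_1=g(v_1)\land x_1=g(y_1))\land R(x_0,x_1)\bigr) \]
has $d(\psi^\exists)=2+1=3$. The paper's proof inherits exactly this defect from Proposition~(\ref{rank=degree_of_ext}).

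What your induction does establish is $d(\psi^\exists)\le r(\psi)$. At an atomic step the sub-induction gives
\[ d(\phi^\exists)\le k+\max_j\bigl(r(t_{i_j})-1\bigr)\le\sum_j r(t_{i_j})=r(\phi). \]
The middle inequality holds because $r(t)\ge 1$ for every non-variable term $t$, so the maximum is at most the sum. Every other clause, $\squaresub{p}$ included, preserves the inequality.

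This inequality is also all the paper actually uses. To get from $(M,\bar{a})\sim^{E\bar{a}}_\alpha(N,\bar{b})$ to $(M,\bar{a})\equiv^{\squ}_\alpha(N,\bar{b})$ in Proposition~(\ref{portmanteau_prop}), one applies Proposition~(\ref{invariance_of_fml_level_by_level}) to $\psi^\exists$. That needs only $d(\psi^\exists)\le r(\psi)\le\alpha$, the inclusion $Q_\alpha(p)\subseteq Q_{d(\psi^\exists)}(p)$, and the fact that unnesting preserves values.

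So you should state and prove the inequality instead. If you want genuine equality, you would have to change the unnesting, for example to a flat one introducing all new variables in a single existential prefix, so that the quantifier count is additive over the arguments in the same way $r$ is.
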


\begin{proof} By Proposition (\ref{rank=degree_of_ext}) and Definition (\ref{defn_squaresub_p}).
\end{proof}

\begin{proposition}
  If $\psi$ is an ${\mathcal L}^{\squ}$-formula,
  $M\in \pshol$ and $\bar{a}\in {^n|M|}$ then
  \[ [\psi(\bar{a})]_M = [\psi^\exists(\bar{a})]_M.\]
\end{proposition}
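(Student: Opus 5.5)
The plan is to prove the statement by induction on the complexity of the $L^{\squ}$-formula $\psi$, simultaneously for all $M\in\pshol$ and all tuples $\bar{a}$ of the appropriate length. The operation $\psi\mapsto\psi^\exists$ only alters atomic subformulae, so it commutes with every formula-building operation: $(\neg\chi)^\exists=\neg\chi^\exists$, $(\Wedge\Phi)^\exists=\Wedge\setof{\chi^\exists}{\chi\in\Phi}$, $(\exists V\chi)^\exists=\exists V\chi^\exists$, and $(\squaresub{p}\chi)^\exists=\squaresub{p}\chi^\exists$. The same holds for $\Vee$, $\to$ and $\forall$.

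The base case is exactly Theorem (\ref{unnesting_pres_value_atom_fml}), which gives $[\phi(\bar{a})]_M=[\phi^\exists(\bar{a})]_M$ for atomic $\phi$. For the connectives $\neg$, $\to$, $\Wedge$, $\Vee$ and the quantifiers, the inductive step proceeds as in Corollary (\ref{evaluation_persists_through_unnesting}). The value of a compound formula (\cite{BMPI}, Definition (2.23)) is computed from $E\bar{a}$, the extent of the formula, and the values of its immediate subformulae, taken at $\bar{a}$ or at extensions $\bar{a}\bar{s}$ of $\bar{a}$. By the inductive hypothesis, applied to all such extended tuples, these subformula values agree for $\chi$ and $\chi^\exists$.

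The one genuinely new step is the connective $\squaresub{p}$. By Definition (\ref{defn_squaresub_p}),
\[ [\squaresub{p}\chi(\bar{a})]_M = E\bar{a}\land E\chi\land([\chi(\bar{a})]_M\leftrightarrow p),\]
and similarly for $\chi^\exists$. The inductive hypothesis gives $[\chi(\bar{a})]_M=[\chi^\exists(\bar{a})]_M$, so it remains to check that $E\chi=E\chi^\exists$.

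I expect this extent bookkeeping to be the main obstacle, mild as it is. Unnesting an equation $c=d$ yields $\exists x\,(x=c\land x=d)$, and in general it introduces the same constant symbols under existential quantifiers. Since the extent of a formula is the meet of the extents of the constants occurring in it, and quantification does not change extent (as used in the proof of the lemma preceding Corollary (\ref{normal_form_for_pp})), the extents agree provided $\phi^\exists$ contains exactly the same constants as $\phi$. I would verify this by a short induction along Definition (\ref{defn_phi^exists}).

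If instead the extent convention of \cite{BMPI} is more delicate, there is a fallback. Theorem (\ref{unnesting_pres_value_atom_fml}) already shows the two atomic values agree, and all values are bounded by $E\bar{a}\land E\phi$. So one could argue the outer factor $E\bar{a}\land E\chi$ may be replaced by $E\bar{a}\land E\chi^\exists$ without changing $[\squaresub{p}\chi(\bar{a})]_M$. I would settle this point before writing up.

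With extents matched, the $\squaresub{p}$ case closes immediately, and the induction, and hence the proposition, is complete.
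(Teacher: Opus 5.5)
Your proposal is correct and follows the paper's own argument. The paper also inducts on the complexity of $\psi$, takes Theorem (\ref{unnesting_pres_value_atom_fml}) as the base case, handles the usual connectives and quantifiers as in Corollary (\ref{evaluation_persists_through_unnesting}), and reads the $\squaresub{p}$ step off Definition (\ref{defn_squaresub_p}). Your check that $\phi^\exists$ contains exactly the constants of $\phi$, so that $E\phi^\exists=E\phi$, is bookkeeping the paper leaves implicit; it is needed for $\neg$, $\to$ and $\forall$ as well as for $\squaresub{p}$.

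Drop the fallback, which would not work anyway. The factor $[\chi(\bar{a})]_M\leftrightarrow p$ is not bounded by $E\bar{a}\land E\chi$ (take $p=\bot$), so equality of extents is genuinely required, and your primary argument supplies it.
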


\begin{proof} By the inductive definition of the interpretation of formulae, as for, 
  Corollary (\ref{evaluation_persists_through_unnesting}), using the interpretation of $\squaresub{p}\phi$ given
  in Definition (\ref{defn_squaresub_p}).
\end{proof}

  Alternatively, one can extend $L$  by introducing for each $p\in \Omega$ a nullary connective $\check{p}$.

  \begin{definition}\label{defn_p_check} Set
  $L^{\raisebox{-0.5ex}{$\mathlarger{\check{}}$}} = L \cup \setof{\check{p}}{p\in \Omega}$, where for $p\in \Omega$ we have that
  $\check{p}$ is a nullary connective and, consequently, each $\check{p}$ is a formula and
  can be used in the inductive construction of more complex $L^{\raisebox{-0.5ex}{$\mathlarger{\check{}}$}}$-formulae.
  Again, $L^{\raisebox{-0.5ex}{$\mathlarger{\check{}}$}}$ and $L$ have the same notions of terms and
  $M$ is an $L^{\raisebox{-0.5ex}{$\mathlarger{\check{}}$}}$-structure in $\psho$ if and only if $M$ is an $L$-structure in $\psho$

  For each $p\in\Omega$ the extent and interpretation of $\check{p}$ are given by
  $E\check{p} = \top$ and $[\check{p}]_M = p$, respectively, while the extent and interpretation of formulae involving
  $\check{p}$ are given by induction on the form of the formulae.

  A variant of this approach is to only add nullary constants
  $\check{p}$ for $p$ in a specified subset of $\Omega$. Two choices
  of subsets that have proven useful are the set of dense elements of
  $\Omega$, those $p$ such that $\neg\neg p = \top$, and a dense
  subset of $\Omega$ of size $d(\Omega)$.
\end{definition}

\begin{remark} As far as we have been able to discover, the connectives $\squaresub{p}$ were introduced by Caicedo in his work
  on intuitionistic connectives in general. See \cite{Caicedo0}, \cite{Caicedo1}, \cite{Caicedo2} and \cite{CaiSig},
  as well as the previously mentioned \cite{SC}.
\end{remark}

\begin{remark} A similar definition to that of $L^{\raisebox{-0.5ex}{$\mathlarger{\check{}}$}}$ arises in fuzzy logic and continuous logic,
  for which truth values line in the real interval $[0,1]$ and 
   where including nullary connectives $\check{p}$ for either all $p\in [0,1]$ or for
   dense, countable subset of $p\in [0,1]$, satisfying the bookkeeping axioms, plays an
   important r\^ole.  See, for example, \cite{Pav} for Pavelka's logic, adding nullary connectives corresponding to each $p\in [0,1]$ and
   \cite{Haj}, \cite{BBHU} and \cite{BYU}, the former for rational Pavelka logic and the latter two for continuous logic,
   for adding nullary connectives corresponding to a countable, dense subset of $[0,1]$ (which includes the value corresponding to $\bot$, $0$ for rational Pavelka logic and
   $1$ for continuous logic). (In those traditions what
  we have called $\check{p}$ is typically denoted by $\bar{p}$, a notation we avoid in order to reduce confusion over
  whether $\bar{p}$ is a tuple of elements of $\Omega$ or a name for a nullary connective.)

  However, in \cite{BBHU}, \cite{BYU} and other work in continuous logic, the alternative of adding unary connectives (in the style of $L^{\squ}$) is often preferred.
  There, a single unary connective, written ``$\frac{x}{2}$'' (which would be written
  $\squaresub{\frac{1}{2}}$ in our notation), is included in the language and
  what would be the use of unary connectives for all other dyadic rationals are derived from expressions involving $\frac{x}{2}$ alone.
  The analogue of this manoeuvre is obviously not available in general for complete Heyting algebras $\Omega$.
\end{remark}

\begin{lemma}\label{bookkeeping} (The so-called ``bookkeeping axioms.'')
  Let $L$ be a first order language with equality over $\Omega$. Let $M$ be an $L^{\raisebox{-0.5ex}{$\mathlarger{\check{}}$}}$-struture. Let
  $p$, $q\in \Omega$. Then
  \begin{eqn}\begin{split} [\check{p}\to\check{q}]_M = &  \sss\sss p \to q = [\widecheck{p\to q}]_M\hbox{ and}\\
    [\check{p}\land\check{q}]_M = & \sss\sss p \land q = [\widecheck{p\land q}]_M.
  \end{split} 
  \end{eqn}
\end{lemma}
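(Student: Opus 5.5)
The plan is to compute both sides directly from the definition of the interpretation of the nullary connectives and of the propositional connectives. By Definition (\ref{defn_p_check}) we have $E\check{p}=E\check{q}=\top$ and $[\check{p}]_M=p$, $[\check{q}]_M=q$. The extents of compound formulae are obtained by meeting the extents of their constituents (\cite{BMPI}, Definition (2.20)), so $E(\check{p}\to\check{q})=E(\check{p}\land\check{q})=\top$. There are no free variables, so the parameter tuple is empty and $E\bar{a}=\top$ as well. Consequently every factor of the form $E\bar{a}\land E\phi$ that appears in the clauses for $\to$ and $\land$ is simply $\top$ and may be dropped.

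The first step is conjunction. The interpretation clause gives $[\check{p}\land\check{q}]_M = E(\check{p}\land\check{q})\land[\check{p}]_M\land[\check{q}]_M = \top\land p\land q = p\land q$. Since $p\land q\in\Omega$, the nullary connective $\widecheck{p\land q}$ exists, and by definition $[\widecheck{p\land q}]_M = p\land q$.

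The second step is implication. The clause for $\to$ in \cite{BMPI} has the form $[\phi\to\psi]_M = E\phi\land E\psi\land E\bar{a}\land([\phi]_M\to[\psi]_M)$. With all extents equal to $\top$ this becomes $p\to q$, computed in $\Omega$. Likewise $[\widecheck{p\to q}]_M = p\to q$.

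The only point needing care is to confirm that the relevant clauses of \cite{BMPI}, Definition (2.20), really do take the meet of extents in this way, so that no stray extent factor survives. Once the extent of every nullary connective is known to be $\top$, this is immediate, so I expect no real obstacle. The same argument extends verbatim to any finitary propositional connective.
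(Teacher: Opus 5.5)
Your proposal is correct and follows essentially the same route as the paper. Both unfold the inductive interpretation clauses for $\land$ and $\to$, use $E\check{p}=E\check{q}=\top$, $[\check{p}]_M=p$ and $[\check{q}]_M=q$ so that every extent factor collapses to $\top$, and then read off $[\widecheck{p\land q}]_M=p\land q$ and $[\widecheck{p\to q}]_M=p\to q$ directly from the definition of the nullary connectives. Your remark that the empty parameter tuple contributes $E\bar{a}=\top$ is a harmless refinement that the paper leaves implicit.
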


\begin{proof}  By the definition by induction of the interpretation of formulae we have
  \[ [\check{p}\to\check{q}] = E\check{p} \land E\check{q} \land [\check{p}]\to[\check{q}] = \top\land\top\land [\check{p}]\to[\check{q}] = p \to q
  =  [\widecheck{p\to q}]_M\] and
  \[ [\check{p}\land\check{q}] = E\check{p} \land E\check{q} \land [\check{p}]\land[\check{q}] = \top\land\top\land [\check{p}]\land[\check{q}] = p \land q
  =  [\widecheck{p\land q}]_M.\]
\end{proof}

\begin{definition} Let $L$ be a first order language with equality over $\Omega$.
  For each $L^{\squ}$-formula $\phi$  we define an $L^{\raisebox{-0.5ex}{$\mathlarger{\check{}}$}}$-formula
  $\phi^\circ$ with $E\phi = E\phi^\circ$. 
  The definition is by induction on the formation of $L^{\squ}$-formulae.
  The only interesting step is when we have
  an $L^{\squ}$-formula $\phi$ and $\phi^\circ$, and want to define define $(\squaresub{p}\phi)^\circ$.
  We set $(\squaresub{p}\phi)^\circ =  (\phi^\circ \leftrightarrow \check p)$.
  This ensures, with the middle equality being the induction hypothesis,
  that $E(\squaresub{p}\phi)^\circ = E\phi^\circ =E\phi = E(\squaresub{p}\phi) $.   
\end{definition}

\begin{proposition} If  $\phi$ is an $L^{\squ}$-formula and $\bar{a}\in |M|^n$ then  \[[\phi(\bar{a})]_M = [\phi^\circ(\bar{a})]_M. \]
\end{proposition}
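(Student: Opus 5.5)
The proof goes by induction on the formation of the $L^{\squ}$-formula $\phi$, proving simultaneously that $E\phi = E\phi^\circ$ (already noted in the definition of $\phi^\circ$) and that $[\phi(\bar{a})]_M = [\phi^\circ(\bar{a})]_M$ for every $M$ and every tuple $\bar{a}$ of the appropriate length.

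For atomic $\phi$ we have $\phi^\circ = \phi$, so there is nothing to prove. For the steps $\neg$, $\to$, $\Wedge$, $\Vee$, $\forall V$ and $\exists V$, the map $\phi\mapsto\phi^\circ$ commutes with the connective or quantifier. The interpretation of a compound formula is computed from the extents and interpretations of its immediate subformulae (as in \cite{BMPI}, Definition (2.23)). For the quantifier cases these subformulae are instantiated at the extended tuples $\bar{a}\bar{s}$. Hence the inductive hypothesis, applied to all tuples, gives the result directly, exactly as in the proof of Corollary (\ref{evaluation_persists_through_unnesting}).

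The only substantive case is $\squaresub{p}\phi$, where $(\squaresub{p}\phi)^\circ = (\phi^\circ\leftrightarrow\check{p})$. By Definition (\ref{defn_squaresub_p}),
\[ [\squaresub{p}\phi(\bar{a})]_M = E\bar{a}\land E\phi\land([\phi(\bar{a})]_M\leftrightarrow p). \]
Reading $\leftrightarrow$ as the conjunction of two implications, the inductive clauses for $\to$ and $\land$ give
\[ [(\phi^\circ\leftrightarrow\check{p})(\bar{a})]_M = E\bar{a}\land E\phi^\circ\land E\check{p}\land([\phi^\circ(\bar{a})]_M\leftrightarrow[\check{p}]_M). \]
This is the same computation pattern as in Lemma (\ref{bookkeeping}), with the extent factors $E\bar{a}$, $E\phi^\circ$ and $E\check{p}$ appearing possibly repeated. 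Since $\land$ is idempotent, the repetitions collapse. Now use $E\check{p}=\top$, $[\check{p}]_M = p$, and the inductive hypotheses $E\phi^\circ = E\phi$ and $[\phi^\circ(\bar{a})]_M = [\phi(\bar{a})]_M$. The expression becomes $E\bar{a}\land E\phi\land([\phi(\bar{a})]_M\leftrightarrow p)$, which is exactly $[\squaresub{p}\phi(\bar{a})]_M$.

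The main point needing care is this last step. One must check that the official clause for $\leftrightarrow$ (or for $\to$ and $\land$, if $\leftrightarrow$ is defined) contributes only extent factors $E\bar{a}\land E\phi^\circ\land E\check{p}$ and nothing beyond them. One must also check that no relativization such as $\bar{a}\on E\phi$ inside the interpretation of $\check{p}$ alters the value. Since $\check{p}$ has no free variables and extent $\top$, its value is $p$ regardless of the tuple, so this goes through.
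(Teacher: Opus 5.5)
Your proposal is correct and follows essentially the same route as the paper. The paper also argues by induction on the formation of formulae, with every case except $\squaresub{p}\phi$ routine, and settles that case by unwinding $[(\phi^\circ\leftrightarrow\check{p})(\bar{a})]_M$ using $E\check{p}=\top$, $[\check{p}]_M=p$ and the inductive hypotheses $E\phi^\circ=E\phi$ and $[\phi^\circ(\bar{a})]_M=[\phi(\bar{a})]_M$; your explicit tracking of the extent equality and your check that repeated extent factors and any relativization of $\check{p}$ to $\bar{a}$ are harmless make precise points the paper's one-line computation leaves implicit.
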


\begin{proof} The proof is by induction on the formation of formulae. Again, the only interesting case is that of
  $\squaresub{p}\phi(\bar{a})$ given we have $[\phi(\bar{a})]_M = [\phi^\circ(\bar{a})]_M$ as an induction hypothesis. In this case
  \begin{eqn}\begin{split}
      [(\squaresub{p}\phi)^\circ(\bar{a})]_M & = [ (\phi^\circ(\bar{a}) \leftrightarrow \check p)]_M =
  E\bar{a} \land E\phi^\circ \land [(\phi^\circ(\bar{a})]_M \leftrightarrow [\check p]_M) \\
 & = Ea \land E\phi \land ([\phi(\bar{a})]_M \leftrightarrow p) = [\squaresub{p}\phi(\bar{a})]_M .
    \end{split}
  \end{eqn}
\end{proof}

\begin{definition} Let $L$ be a first order language with equality over $\Omega$. For each $L^{\raisebox{-0.5ex}{$\mathlarger{\check{}}$}}$-formula  $\psi$ we define
  an  $L^{\squ}$-formula $\psi^{\squ}$,  using the fact that there are tautologies.
  Let  $\tau$ be any tautology, for example, let $\tau$ be $\neg \exists v \sss v =v \lor \neg\neg \exists v\sss v=v$. Obtain
   $\psi^{\squ}$ by replacing every occurence of each $\check{p}$ in $\psi$ by $\squaresub{p}\tau$.
\end{definition}

\begin{proposition} If  $\psi$ is an $L^{\raisebox{-0.5ex}{$\mathlarger{\check{}}$}}$-formula and $\bar{a}\in |M|^n$ then  \[[\psi(\bar{a})]_M = [\phi^{\squ}(\bar{a})]_M. \]
\end{proposition}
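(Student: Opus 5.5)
The plan is an induction on the formation of $L^{\raisebox{-0.5ex}{$\mathlarger{\check{}}$}}$-formulae $\psi$, mirroring the proof of the preceding proposition for $\phi^\circ$; here the statement should of course read $[\psi(\bar{a})]_M = [\psi^{\squ}(\bar{a})]_M$. The only non-routine case is the base case $\psi = \check{p}$, where $\psi^{\squ} = \squaresub{p}\tau$. In every other case the clause defining $[\psi(\bar{a})]_M$ from the values of its immediate subformulae is the same clause for $\psi$ and for $\psi^{\squ}$. The induction hypothesis then gives equality at once, provided the extents also agree ($E\psi = E\psi^{\squ}$). So I will carry the extent equality along as part of the induction. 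Atomic formulae are untouched by the translation, so the remaining base cases are trivial.

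For the base case $\check{p}$, I would first compute $E\squaresub{p}\tau = E\tau$. The tautology $\tau$ contains no constant symbols, so $E\tau = \top = E\check{p}$, and the extent part of the induction holds. Next, $[\squaresub{p}\tau(\bar{a})]_M = E\bar{a}\land E\tau \land ([\tau(\bar{a})]_M \leftrightarrow p)$ by Definition (\ref{defn_squaresub_p}). I then need $[\tau(\bar{a})]_M = E\bar{a}$. Write $\sigma$ for $\exists v\sss v=v$. Then $[\sigma]_M = \bigvee_{s\in|M|} Es$, and $\tau$ is $\neg\sigma \lor \neg\neg\sigma$, so its value is $\neg x \lor \neg\neg x$ for $x = [\sigma]_M$, cut down by extents.

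\textbf{Main obstacle.} In an arbitrary complete Heyting algebra, $\neg x\lor\neg\neg x$ need not equal $\top$, since that is only weak excluded middle. So the suggested $\tau$ is not a genuine tautology in general. I would instead choose $\tau$ to be $\sigma\to\sigma$, or $\neg(\sigma\land\neg\sigma)$, or simply $v_0=v_0 \to v_0=v_0$; each of these has value $\top$ intuitionistically. The definition of $\psi^{\squ}$ only asks for \emph{some} tautology, so this choice is legitimate.

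With such a $\tau$ one gets $[\tau(\bar{a})]_M = E\bar{a}$, as the interpretation of formulae in \cite{BMPI} always cuts down by $E\bar{a}\land E\tau$. Hence
\[ [\squaresub{p}\tau(\bar{a})]_M = E\bar{a}\land (E\bar{a}\leftrightarrow p) = E\bar{a}\land p . \]
Finally I compare this with $[\check{p}]_M = p$, as a formula evaluated at $\bar{a}$. For a nullary connective carrying parameters $\bar{a}$, the inductive definition of interpretation gives the value $E\bar{a}\land E\check{p}\land p = E\bar{a}\land p$; in the pure sentence case $\bar{a}$ is empty and the value is $p$. Either way the two values agree, and the induction closes.
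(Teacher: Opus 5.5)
Your proof is correct and follows the paper's own argument: induction on the formation of formulae, where the only real case is $\check{p}\mapsto\squaresub{p}\tau$, handled via $[\tau]_M=\top$. Your bookkeeping of extents and parameters, giving $E\bar{a}\land p$ on both sides, only makes explicit what the paper's one-line computation suppresses. You are also right that the paper's sample $\tau$, namely $\neg\exists v\sss v=v\lor\neg\neg\exists v\sss v=v$, has value $\neg x\lor\neg\neg x$ with $x=\bigvee_{s\in|M|}Es$, and so need not be $\top$. Replacing it by a sentence such as $\exists v\sss v=v\to\exists v\sss v=v$ or $\neg(\exists v\sss v=v\land\neg\exists v\sss v=v)$ is therefore a genuine repair. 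However, drop your third option $v_0=v_0\to v_0=v_0$: it is not a sentence, so $\psi^{\squ}$ would acquire the free variable $v_0$ (or have it captured by a quantifier of $\psi$), which changes the value.
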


\begin{proof}  Since
   $[\squaresub{p}\tau]_M =  ( [\tau]_M \leftrightarrow p) = (\top \leftrightarrow p) = p = [\check{p}]_M$,
   by induction on the formation of formulae we have that $[\psi(\bar{a})]_M = [\psi^{\squ}(\bar{a})]_M$.
\end{proof}

Consequently, all three approaches, using $\squaresub{p}\phi(\bar{a})$ as an abbreviation for a combination of elements of $\Omega$,
introducing unary connectives $\squaresub{p}$ and introducing nullary connectives $\check{p}$, in each case for all $p\in\Omega$,
are equivalent semantically (although different syntactically). 
\vskip6pt

We conclude this section by commenting on the relationship between our results on the method of diagrams from
\cite{BMPI}, \S{}\ref{pres_phenom_section}
and the results in \cite{BM14}, \S{}5 using the unary connectives $\squaresub{p}$ introduced in this section.
At the core of the results in \cite{BM14}, \S{}5 is the following.

\begin{proposition}\label{equiv_p=q} Let $M$ and $N$ be $L$-structures in $\psho$,
    let $\tilde{M}$ be the natural expansion of $M$ to an $L_M$-structure
    (as before) and let $\tilde{N}$ be an expansion of $N$ to an
    $L_M$-structure. Let $\phi(\bar{\underaccent{\bar}{a}})$ be an
    $L_M$-sentence and $E\bar{a} \le E\phi$, \emph{i.e.}~all of the constant symbols in $\phi$ are realized by elements of $\bar{a}$.
    Let $q=[\phi(\bar{\underaccent{\bar}{a}}^{\tilde{M}})]_{\tilde{M}}$, $p=[\phi(\bar{\underaccent{\bar}{a}}^{\tilde{N}})]_{\tilde{N}}$ and $d=p\to q$.
    \begin{eqn}\begin{split}
        & \;\; \tilde{N} \forces \squaresub{\top}\phi(\bar{\underaccent{\bar}{a}}\on q)\hbox{ and }
        \tilde{N} \forces \squaresub{d}\phi(\bar{\underaccent{\bar}{a}}^{\tilde{N}}\on p)\hbox{ if and only if }p=q.
    \end{split}
  \end{eqn}
  \end{proposition}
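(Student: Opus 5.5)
The plan is to unpack both forcing statements, using the abbreviation of Definition (\ref{defn_forces_squaresub_as_abbrev}) (equivalently Definition (\ref{defn_squaresub_p})), into equations in $\Omega$. After that the proposition reduces to an elementary lattice computation. Recall that, under the hypothesis that the extent of the parameters is below $E\phi$, $\tilde{N}\forces\squaresub{r}\phi(\bar{b})$ holds if and only if $E\bar{b}\land r = [\phi(\bar{b})]_{\tilde{N}}$. Throughout I write $e = E\bar{a}$. Since $\tilde{N}$ is an expansion interpreting the new constant symbols with the same extents as in $L_M$, we also have $E\bar{\underaccent{\bar}{a}}^{\tilde{N}} = e$. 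Moreover $p,q\le e$, since interpretations are bounded by the extents of the parameters (\cite{BMPI}, Lemma (\ref{interp_of_fmla_bounded_by_extents_of_fmla_and_params})).

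The key auxiliary fact I will invoke is the restriction property of interpretations: for any tuple $\bar{b}$ and any $r\in\Omega$,
\[ [\phi(\bar{b}\on r)]_{\tilde{N}} = [\phi(\bar{b})]_{\tilde{N}}\land r. \]
I take this from \cite{BMPI}. If it is not directly stated there, I would prove it by a routine induction on $\phi$, from the atomic case $[\bar{b}\on r = \bar{b}'\on r] = [\bar{b}=\bar{b}']\land r$ and the corresponding property of characteristic functions of relations. The quantifier and connective steps go through because $\land r$ distributes over $\Vee$ and commutes with $\to$ after meeting with the extent $E\bar{b}\land r$. I expect this step to be the only real obstacle: specifically the implication and negation steps, where one must use that the defining clause of the interpretation of $\psi\to\chi$ is relativised by the extent $E\bar{b}$. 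Using $x\land r\land(y\to z) = x\land r\land((y\land r)\to(z\land r))$ handles it.

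With this fact in hand, the first condition $\tilde{N}\forces\squaresub{\top}\phi(\bar{\underaccent{\bar}{a}}\on q)$ reads
\[ E(\bar{a}\on q)\land\top = [\phi(\bar{a})]_{\tilde{N}}\land q, \]
that is, $e\land q = p\land q$. Since $q\le e$, this is $q = p\land q$, i.e.\ $q\le p$. The second condition $\tilde{N}\forces\squaresub{d}\phi(\bar{\underaccent{\bar}{a}}^{\tilde{N}}\on p)$ reads $e\land p\land d = p\land p$, i.e.\ $p\land(p\to q) = p$. Since $p\land(p\to q) = p\land q$ in any Heyting algebra, this is $p\land q = p$, i.e.\ $p\le q$.

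Hence the conjunction of the two forcing statements is equivalent to $q\le p$ and $p\le q$, that is, to $p=q$. Conversely, if $p=q$ then both computed equations hold trivially, so both directions follow at once.
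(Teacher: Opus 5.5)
Your proof is correct and takes the same approach as the paper. You unpack each forcing statement via the equivalent form $E\bar{b}\land r=[\phi(\bar{b})]_{\tilde{N}}$, getting $q\le p$ from the first condition and $p\le q$ from the second.

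The only difference is in how the second condition is evaluated. The paper cites the forcing-preservation lemma from the prequel, parts (2) and (4), and part (4) uses $q\le p$ to evaluate $E(\bar{a}\on p)\land d$. You instead compute directly from the restriction identity $[\phi(\bar{b}\on r)]=[\phi(\bar{b})]\land r$ and the Heyting identity $p\land(p\rightarrow q)=p\land q$. This makes it visible that each forcing condition on its own is equivalent to one of the two inequalities.
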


\begin{proof} Recall the following results from \cite{BMPI}, Lemma (\ref{forcing_preservation_lemma}).
\begin{lemma}\label{BMPI-forcing_preservation_lemma}
  Let $M$ and $N$ be $L$-structures in $\psho$,
  let $\tilde{M}$ be the natural expansion of $M$ to an $L_M$-structure, where for each $a_i\in |M|$ we have
  ${\underaccent{\bar}{a}}_i$ is a name for $a_i$,  and let $\tilde{N}$ be an expansion of $N$ to an
    $L_M$-structure. Let $\phi(\bar{\underaccent{\bar}{a}})$ be an
    $L_M$-sentence. Let $q=[\phi(\bar{\underaccent{\bar}{a}}^{\tilde{M}})]_{\tilde{M}}$, $p=[\phi(\bar{\underaccent{\bar}{a}}^{\tilde{N}})]_{\tilde{N}}$ and $d=p\to q$.
    \begin{eqn}\begin{split}
        (2)& \;\; \tilde{N} \forces \phi(\bar{\underaccent{\bar}{a}}\on q)\hbox{ if and only if } q\le p \hbox{, \emph{i.e.} }
        [\phi(\bar{\underaccent{\bar}{a}}^{\tilde{M}})]_{\tilde{M}} \le [\phi(\bar{\underaccent{\bar}{a}}^{\tilde{N}})]_{\tilde{N}}.\\
        (4)& \;\; \hbox{If  }q\le p \hbox{ then }  (E\bar{a}\on p) \land d = q = [\phi(\bar{a}\on p)]_M.
    \end{split}
  \end{eqn}
  \end{lemma}

  For ``$\Longrightarrow$'', by Lemma (\ref{box_top_is_id}),
  Lemma (\ref{BMPI-forcing_preservation_lemma}(2)) shows that the first hypothesis is equivalent to $q\le p$. On the other hand
  if $\tilde{N} \forces \squaresub{d}\phi(\bar{\underaccent{\bar}{a}}^{\tilde{N}}\on p)$ then by (one of the alternatives in) the definition
  of forcing,  $(E\bar{a}\on p)\land d = [\phi(\bar{\underaccent{\bar}{a}}^{\tilde{N}}\on p)]_{\tilde{N}}$.
  The right had side of the equality is $p\land p= p$. The left hand side, by Lemma (\ref{BMPI-forcing_preservation_lemma}(2)) and  
  Lemma (\ref{BMPI-forcing_preservation_lemma}(4)), is $p\land q$. Thus $p = p\land q$ and so $p\le q$.

  For ``$\Longleftarrow$'', use Lemma (\ref{BMPI-forcing_preservation_lemma}(2)), and  
  Lemma (\ref{BMPI-forcing_preservation_lemma}(4)) and the definition of forcing, respectively.
\end{proof}

One can then employ Proposition (\ref{equiv_p=q}) to re-write results from \cite{BMPI}, \S{}\ref{pres_phenom_section}
in the form of assertions between the equivalence of the existence of a suitable $L$-$\Gamma$-monomorphism
from $M$ to $N$ and the existence of an extension $\tilde{N}$ of $N$ which forces the relevant $L^{\squ}_M$-sentences
which $M$ forces.

\section{Equivalence of models implies equality of functions}\label{equiv_implies_equality}

In this section we show that if two pairs consisting of a model and a tuple from it agree on formulae up to a certain quantifier degree then
the functions derived from the two pairs are equal.

Here, in order to prove Proposition (\ref{prop_equiv_implies_equality}), and
consequently also in Proposition (\ref{portmanteau_prop}) where we use this proposition
 for the link `(\ref{item_equiv_ceu}) implies (\ref{item_Hs_equal})' in the circle of equivalences, 
 we need to make an assumption on the language concerning the extents of constants. 

\begin{assumption}
  In this section, \S{}\ref{portmanteau} and \S{}\ref{the_phi_alpha_Ma} we restrict to signatures
  such that $\mathcal C$ is a set and for all $c\in\mathcal C$ we have $Ec=\top$.
\end{assumption}

Without some such an assumption we might have
$E\mathcal C = \Wedge \setof{Ec}{c\in \mathcal C} = \bot$ and this would break our proof.
(Similar considerations apply in \S{}\ref{the_phi_alpha_Ma}.)\footnote{An alternative way of addresssing this problem would be by
restricting to signatures where $E\mathcal C\ne \bot$ and `working below $E\mathcal C$', proving results for
pairs of the form $(M\on E\mathcal C,\bar{a}\on E\mathcal C)$. One could arrive at this situation 
by restricting to when $E\mathcal C\ne \bot$, taking the quotient of $\Omega$ by the (principal) filter on $\Omega$ generated by $E\mathcal C$
and considering the induced pairs over the quotient. \emph{cf}.~\cite{Mir06}, Example (26.5) and Exercise (26.25).
However there seems to be no real advantage to us to doing this over simply assuming all constants have extent $\top$. },\footnote{If $\mathcal C$ is a presheaf
then,  taken literally, we always have $E\mathcal C = \bot$, since if $c\in \mathcal C$ we must have $c\on \bot \in \mathcal C$ and $Ec\on \bot = \bot$. However,
for the purposes of our proofs it would suffice to restrict attention
to signatures for which $\mathcal C$ is  a sheaf and there is some set $\mathcal D\subseteq \mathcal C$ such that $E\mathcal D\ne \bot$ and
for all $c\in \mathcal C$ there is some $d\in\mathcal D$ and some $p\in\Omega$ such that $c=d\on p$, and then work below $E\mathcal D$, as in the previous footnote.}

This consequence of this assumption is that for any formula $\phi$ in $L^{\squ}$ we $E\phi = \top$.

We start the section with a generally useful lemma. 

\begin{lemma}\label{lemma_can_focus_on_forced_formulae} Suppose $M$ is a structure in $\psh{\Omega,L}$ and $\phi(\bar{x})$ is an $L^{\squ}$ formula.
  For any $\bar{a}\in {^{\lh(\bar{x})} |M|}$ there is some $\bar{a}\in {^{\lh(\bar{x})} |M|}$ and $q\in \Omega$ such that
  \[ [\phi(\bar{a})]_M = [\phi(\bar{a}')]_M = E\bar{a}' \sss\sss\&\sss\sss \bar{a}'=\bar{a}\on q .\] 
\end{lemma}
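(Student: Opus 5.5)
The natural choice is $q=[\phi(\bar{a})]_M$ and $\bar{a}'=\bar{a}\on q$. We then verify the two equalities $[\phi(\bar{a}\on q)]_M=[\phi(\bar{a})]_M$ and $[\phi(\bar{a}\on q)]_M=E(\bar{a}\on q)$.

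First, by \cite{BMPI}, Lemma (\ref{interp_of_fmla_bounded_by_extents_of_fmla_and_params}), we have $q=[\phi(\bar{a})]_M\le E\bar{a}\land E\phi$. Under the standing assumption $E\phi=\top$, this gives $q\le E\bar{a}$. Hence
\[E(\bar{a}\on q)=E\bar{a}\land q=q.\]
This settles the equality $E\bar{a}'=[\phi(\bar{a})]_M$ once the first equality is known.

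The key step is then the restriction property
\[[\phi(\bar{a}\on r)]_M=[\phi(\bar{a})]_M\land r\quad\hbox{for all } r\in\Omega\hbox{ and all } L^{\squ}\hbox{-formulae } \phi.\]
Taking $r=q$ gives $[\phi(\bar{a}\on q)]_M=q$, as required. I would prove this by induction on the formation of $\phi$.

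\textbf{Atomic case.} This follows from the definition of the interpretation of terms and atomic formulae in \cite{BMPI}, Definitions (2.17) and (2.23), since term values commute with restriction and $[x\on r=y\on r]_M=[x=y]_M\land r$. If needed, we can reduce to unnested atomic formulae via Theorem (\ref{unnesting_pres_value_atom_fml}).

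\textbf{Negation and $\to$.} Here the interpretation has the form $E\bar{a}\land E\phi\land([\psi(\bar{a})]_M\to[\chi(\bar{a})]_M)$. Restricting to $r$ gives $E\bar{a}\land r\land([\psi(\bar{a})]_M\land r\to[\chi(\bar{a})]_M\land r)$. This equals the original value meet $r$ by the Heyting identity $r\land((x\land r)\to(y\land r))=r\land(x\to y)$.

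\textbf{$\Wedge$ and $\Vee$.} Use the induction hypothesis and infinite distributivity of $\land$ over $\Vee$. For $\Wedge$ nothing more is needed beyond the explicit $E\bar{a}$ factor.

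\textbf{Quantifiers.} For $\exists$, the value is $\Vee_{\bar{s}}[\psi(\bar{a},\bar{s})]_M$. Applying the induction hypothesis to the tuple $\bar{a}\bar{s}$ restricted to $r$, together with $[\psi(\bar{a}\on r,\bar{s})]_M=[\psi(\bar{a}\on r,\bar{s}\on r)]_M$ (again by the induction hypothesis), yields the claim by distributivity. For $\forall$, the presheaf definition quantifies over sections $\bar{s}$ with extent at least the relevant extent. The restricted sections $\bar{s}\on r$ range exactly over those needed, so the meet restricts correctly.

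\textbf{$\squaresub{p}$.} The value is $E\bar{a}\land(p\leftrightarrow[\psi(\bar{a})]_M)$. Restricting gives $E\bar{a}\land r\land(p\leftrightarrow[\psi(\bar{a})]_M\land r)$, which equals $E\bar{a}\land r\land(p\leftrightarrow[\psi(\bar{a})]_M)$ by the same Heyting identity.

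I expect the main obstacle to be the $\forall$ and $\to$/$\neg$ clauses, together with the exact form of the $\forall$ clause in \cite{BMPI}, Definition (2.23). The Heyting identity is routine, but care is needed to check that the extent factors appear so that restriction commutes with the interpretation. If the restriction property is already available in \cite{BMPI}, I would simply cite it, and the lemma follows immediately from the choice $q=[\phi(\bar{a})]_M$.
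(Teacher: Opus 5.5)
Your argument is correct and is the paper's own. The paper likewise takes $q=[\phi(\bar a)]_M$ and $\bar a'=\bar a\on q$, and uses exactly the restriction identity $[\phi(\bar a\on q)]_M=[\phi(\bar a)]_M\land q$, which it takes for granted, so your inductive verification only adds detail. It also uses $[\phi(\bar a)]_M\le E\bar a$, and for that you do not need the standing assumption $E\phi=\top$, since $[\phi(\bar a)]_M\le E\bar a\land E\phi\le E\bar a$ holds in general.
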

\begin{proof} For any $\bar{a}\in {^{\lh(\bar{x})} |M|}$ we have
  \[ [ \phi(\bar{a})]_M = [ \phi(\bar{a})]_M \land [ \phi(\bar{a})]_M = [ \phi(\bar{a}\on [ \phi(\bar{a})]_M)]_M \] and
  \[ E (\bar{a}\on [ \phi(\bar{a})]_M) =  E\bar{a} \land [ \phi(\bar{a})]_M = [ \phi(\bar{a})]_M =  [ \phi(\bar{a}\on [ \phi(\bar{a})]_M)]_M .\]
  Setting $\bar{a}' = \bar{a}\on [ \phi(\bar{a})]_M$ and $q=  [ \phi(\bar{a})]_M$ gives the desired result.
\end{proof}

\begin{corollary}\label{coroll_forced_fmlae_sets} Suppose $M$ is a structure in $\psh{\Omega,L}$, $\phi(\bar{x})$ is an $L^{\squ}$ formula and $p\in \Omega$.
  \[ \setof{[ \phi(\bar{a})]_M }{ \bar{a}\in {^{\lh(\bar{x})} |M\on p|} } = \setof{ E\bar{a}}{ \bar{a}\in {^{\lh(\bar{x})} |M\on p|} \sss\sss\&\sss\sss E\bar{a} = [\phi(\bar{a})]_p } .\]
\end{corollary}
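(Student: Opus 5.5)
The plan is to prove the two inclusions separately, with Lemma (\ref{lemma_can_focus_on_forced_formulae}) as the main tool. I read the subscript $p$ on the right-hand side as $M$, i.e. the condition is $E\bar{a} = [\phi(\bar{a})]_M$. Two facts from the presheaf structure will be used repeatedly. First, restriction stays inside $|M\on p|$: if $\bar{a}\in {^{\lh(\bar{x})}|M\on p|}$ and $q\in\Omega$, then $\bar{a}\on q\in {^{\lh(\bar{x})}|M\on p|}$. This holds because $E(a_i\on q) = Ea_i\land q\le p$, or, reading $|M\on p|$ as restrictions of sections to $p$, because $(b\on p)\on q = b\on (p\land q) = (b\on q)\on p$.

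For the inclusion ``$\supseteq$'': if $\bar{a}\in{^{\lh(\bar{x})}|M\on p|}$ satisfies $E\bar{a}=[\phi(\bar{a})]_M$, then $E\bar{a}$ is itself the value $[\phi(\bar{a})]_M$ at an admissible tuple. So it lies in the left-hand set, and nothing further is needed.

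For the inclusion ``$\subseteq$'': take $\bar{a}\in{^{\lh(\bar{x})}|M\on p|}$ and apply Lemma (\ref{lemma_can_focus_on_forced_formulae}). Concretely, set $q=[\phi(\bar{a})]_M$ and $\bar{a}'=\bar{a}\on q$. The lemma gives $[\phi(\bar{a})]_M=[\phi(\bar{a}')]_M=E\bar{a}'$. By the restriction fact above, $\bar{a}'\in{^{\lh(\bar{x})}|M\on p|}$. Hence $[\phi(\bar{a})]_M = E\bar{a}'$ with $\bar{a}'$ an admissible witness for the right-hand set, and the inclusion follows.

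The only point needing care is the identity inside the lemma's proof, $[\phi(\bar{a})]_M\land q=[\phi(\bar{a}\on q)]_M$ for $L^{\squ}$-formulae. Under the standing assumption that constants have extent $\top$, this is the standard restriction property of interpretations. Its inductive step for $\squaresub{r}$ needs $E(\bar{a}\on q)\land([\phi(\bar{a})]_M\land q\leftrightarrow r)= q\land E\bar{a}\land([\phi(\bar{a})]_M\leftrightarrow r)$. This is a Heyting algebra identity: for $x\le e$ one has $q\land e\land((x\land q)\leftrightarrow r)=q\land e\land(x\leftrightarrow r)$, since $q\land((x\land q)\to r)=q\land(x\to r)$ and $q\land(r\to x\land q)=q\land(r\to x)$. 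Since the lemma is stated earlier, I will simply invoke it rather than reprove this.
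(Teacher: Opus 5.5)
Your proposal is correct and follows essentially the same route as the paper: ``$\subseteq$'' comes from Lemma (\ref{lemma_can_focus_on_forced_formulae}) with $q=[\phi(\bar{a})]_M$ and $\bar{a}'=\bar{a}\on q$, and ``$\supseteq$'' is immediate because each such $E\bar{a}$ is itself a value $[\phi(\bar{a})]_M$. You also check explicitly that $\bar{a}\on q$ stays in $|M\on p|$, and you read $[\phi(\bar{a})]_p$ as $[\phi(\bar{a})]_M$, consistent with how the corollary is used later; both are details the paper leaves tacit.
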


\begin{proof} By Lemma (\ref{lemma_can_focus_on_forced_formulae}), if $\bar{a}\in {^{\lh(\bar{x})} |M\on p|}$  there is some
  $\bar{a}'\in {^{\lh(\bar{x})} |M\on p|}$ such that $E\bar{a}' = [\phi(\bar{a})]_M$ and $E\bar{a}' = [\phi(\bar{a}')]_p$. So ``$\subseteq$' holds'. On the other hand,
  \begin{eqn}\begin{split} \setof{ E\bar{a}}{ \bar{a}\in {^{\lh(\bar{x})} |M\on p|} & \sss\sss\&\sss\sss E\bar{a} = [\phi(\bar{a})]_p } = \\
&  \setof{ [\phi(\bar{a})]_p }{ \bar{a}\in {^{\lh(\bar{x})} |M\on p|} \sss\sss\&\sss\sss E\bar{a} = [\phi(\bar{a})]_p } , 
    \end{split}
  \end{eqn}so ``$\supseteq$'' is also clear.
\end{proof}

\begin{definition}
  Suppose $M$ and $N$ are
  structures in $\psh{\Omega,L}$, $\bar{a}\in {^{<\lambda} |M|}$, $\bar{b}\in {^{\lh(\bar{a})} |N|}$ with
  $E\bar{a}=E\bar{b}$ and
  $h:\bar{a}\longrightarrow \bar{b}$ is given by for each $i<\lh(\bar{a})$ letting $h(a_i)=b_i$.

  Write $(M,\bar{a})\equiv^{\squ}_\alpha (N,\bar{b})$ and $(M,\bar{a})\equiv^{\squ,\ceu}_\alpha (N,\bar{b})$
  if every $L^{\alpha,\squ}$-formula $\phi$ with $r(\phi)\le\alpha$ and every ceu
  $L^{\alpha,\squ}$-formula $\phi$ with $r(\phi)\le\alpha$, respectively, is invariant under $h$ in
  $E\bar{a}$.
  
  Write
  $M\equiv^{\squ}_\alpha N$ and $M\equiv^{\squ,\ceu}_\alpha N$ if 
  $(M,\emptyset)\equiv^{\squ}_\alpha (N,\emptyset)$ and
  $(M,\emptyset)\equiv^{\squ,\ceu}_\alpha (N,\emptyset)$, respectively.
\end{definition}

\begin{proposition}\label{prop_equiv_implies_equality}
  Suppose $M$ and $N$ are
  structures in $\psh{\Omega,L}$, $\bar{a}\in {^{<\lambda} |M|}$, $\bar{b}\in {^{\lh(\bar{a})} |N|}$,
  $E\bar{a}=E\bar{b}$ and $\alpha\in \On$.
    If $(M,\bar{a})\equiv^{\squ,\ceu}_\alpha (N,\bar{b})$ then
    $H^\alpha_{M,\bar{a}} = H^\alpha_{N,\bar{b}}$. \vskip6pt
\end{proposition}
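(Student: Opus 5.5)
Induction on $\alpha$, with a slightly stronger hypothesis: for every $\beta\le\alpha$, whenever $(M,\bar{a})\equiv^{\squ,\ceu}_\beta (N,\bar{b})$ with $E\bar{a}=E\bar{b}$, we have $H^\beta_{M,\bar{a}} = H^\beta_{N,\bar{b}}$, for all $M,N,\bar{a},\bar{b}$ simultaneously.

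\textbf{Base and limit cases.} For $\alpha=0$, $H^0$ is the interpretation function $I$. Each unnested atomic formula is ceu of rank $0$, so it is invariant under $h$ in $E\bar{a}$. This gives $[\psi(a_{f(0)},\dots)]_M=[\psi(b_{f(0)},\dots)]_N$, i.e.\ $I_{M,\bar{a}}=I_{N,\bar{b}}$. At limit $\alpha$, $\equiv_\alpha$ implies $\equiv_\beta$ for all $\beta<\alpha$. Since $H^\alpha\on\beta=H^\beta$ and $H^\alpha$ is determined by these restrictions, the induction hypothesis gives the result.

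\textbf{Successor case $\alpha+1$.} Equality on arguments with last coordinate $\beta<\alpha$ follows from the induction hypothesis. The real task is $(K,\bar{s}\bar{t},\alpha)$, where we must show
\[ \bigvee\setof{E\bar{c}}{E\bar{c}\le E\bar{t},\ H^\alpha_{K,\bar{s}\bar{t}\on E\bar{c}}=H^\alpha_{M,\bar{a}\bar{c}}} = \bigvee\setof{E\bar{d}}{E\bar{d}\le E\bar{t},\ H^\alpha_{K,\bar{s}\bar{t}\on E\bar{d}}=H^\alpha_{N,\bar{b}\bar{d}}}. \]
The plan is to express the left side as $[\exists\bar{w}\,\Theta(\bar{a},\bar{w})]_M$ for a single ceu formula $\Theta$ of rank $\le\alpha$. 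Then $\exists\bar{w}\,\Theta$ has rank $\le\alpha+1$, so its invariance gives the same value with $\bar{b}$ in $N$. Reading that value back as the right-hand supremum finishes the step.

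\textbf{Main obstacle: the ceu characteristic formula.} I would prove by a parallel induction that for each $(K,\bar{u})$ there is a ceu $L^{\alpha,\squ}$-formula $\theta^\alpha_{K,\bar{u}}(\bar{x})$ with
\[ [\theta^\alpha_{K,\bar{u}}(\bar{e})]_P = \bigvee\setof{q\le E\bar{e}}{H^\alpha_{P,\bar{e}\on q}=H^\alpha_{K,\bar{u}\on q}} \]
in every $P$. (Some care is needed to ensure this supremum is attained, or at least to work with it as a supremum.)
\begin{itemize}
\item At level $0$, take the conjunction of $\squaresub{p}\psi$ over unnested atomic $\psi$ and their instantiation values $p=I_{K,\bar{u}}(\psi,f)$. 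By Lemma (\ref{box_top_is_id}) and the definition of $\squaresub{p}$, this measures exactly where the atomic values agree; ceu closure under $\squaresub{p}$ and $\Wedge$ is what permits this.
\item At successor levels, conjoin over domain arguments $(K',\bar{s}'\bar{t}',\alpha)$ the formula
\[ \squaresub{r}\,\exists\bar{w}\,\theta^\alpha_{K',\bar{s}'\bar{t}'}(\bar{x}\bar{w}), \qquad r=H^{\alpha+1}_{K,\bar{u}}(K',\bar{s}'\bar{t}',\alpha). \]
This is ceu of rank $\alpha+1$.
\end{itemize}
The conjunction ranges over a proper class. I would replace it by a set of representatives, indexed by the values $F^\alpha$ via Lemma (\ref{reln_Fs_and_Hs_tuples}), which form a set.

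\textbf{Expected difficulties.} The hardest step is verifying that $\squaresub{r}$ correctly turns the equation ``value $=r$'' into the extent of agreement, i.e.\ that $p\leftrightarrow q$ behaves like equality locally in a Heyting algebra, and that suprema interact well with restriction. I expect to use Lemma (\ref{lemma_can_focus_on_forced_formulae}) to restrict to tuples whose extent equals the formula's value. The standing assumption $Ec=\top$ guarantees $E\phi=\top$, so extents of formulae never cut values down.

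Given the characteristic formulae, the successor step is as outlined above. Take $\Theta=\theta^\alpha_{K,\bar{s}\bar{t}}(\bar{x}\bar{w})$ evaluated at $\bar{a}$ in $M$. Its value equals $H^{\alpha+1}_{M,\bar{a}}(K,\bar{s}\bar{t},\alpha)$, using Lemma (\ref{self_application_tuples})-style reasoning and the restriction lemma for the $H$'s. Invariance then transfers this value to $N$.
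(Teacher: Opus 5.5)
Your proposal is correct in outline. It takes a genuinely different route from the paper's proof of this proposition: it is essentially the construction of the paper's final section. Your $\theta^\alpha_{K,\bar u}$ is the paper's $\phi^\alpha_{K,\bar u}$. The box subscripts $[\exists\bar w\,\phi^\alpha_{K',\bar s'\bar t'}(\bar u,\bar w)]_K$ of $\phi^{\alpha+1}_{K,\bar u}$ equal your $r=H^{\alpha+1}_{K,\bar u}(K',\bar s'\bar t',\alpha)$, by Proposition (\ref{char_box_subscript_in_terms_of_fns_tuples}) and Lemma (\ref{reln_Fs_and_Hs_tuples}). Your value lemma is a value-exact form of the second conclusion of Proposition (\ref{char_box_subscript_in_terms_of_fns_tuples}).

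The paper's proof here uses no canonical formulae. For each $\bar c$ it sorts \emph{all} ceu formulae of rank $\le\alpha$ by the value of $\squaresub{[\sigma(\bar s\bar t)]_K}\sigma(\bar a\bar c)$ in $M$. It picks a set $\Psi$ of representatives and takes $\phi=\Wedge_{\psi\in\Psi}\squaresub{[\psi(\bar s\bar t)]_K}\psi$. Then $[\exists\bar x\,\phi(\bar a\bar x)]_M\land E\bar t$ is the join of those $E\bar c$ with $(M,\bar a\bar c)\equiv^{\squ,\ceu}_\alpha(K,\bar s\bar t\on E\bar c)$, and the induction hypothesis converts this into a join over $H^\alpha$-agreement. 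That needs only the forcing characterization of $\squaresub{p}$, not a value computation.

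It is, however, less self-contained than your argument, for two reasons.
\begin{itemize}
\item The induction hypothesis gives only one inclusion between the two index sets. Equality of the joins therefore tacitly needs the converse ($H^\alpha$-agreement implies $\equiv^{\squ,\ceu}_\alpha$), which must be imported from the $Q_\alpha$ side via Propositions (\ref{equiv_fn_equal_and_Q_for_Fs_tuples}) and (\ref{invariance_of_fml_level_by_level}).
\item $\Psi$ is chosen from tuples of $M$. It must be enlarged by representatives for tuples of $N$ before the computation can be repeated ``in the same way'' in $N$.
\end{itemize}
Your formula depends only on $(K,\bar s\bar t)$, works uniformly in $M$ and $N$, and needs nothing beyond the invariance of one formula. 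The price is the value lemma, which in return also gives Proposition (\ref{char_box_subscript_in_terms_of_fns_tuples}) directly.

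The step you expect to be hardest is straightforward. In any Heyting algebra, $q\le(p\leftrightarrow r)$ iff $q\land p=q\land r$. With this, one proves by induction that $q\le[\theta^\alpha_{K,\bar u}(\bar e)]_P$ iff $q\le E\bar e$ and $H^\alpha_{P,\bar e\on q}=H^\alpha_{K,\bar u\on q}$, so the supremum is in fact a maximum. For this, $\theta^{\alpha+1}_{K,\bar u}$ should contain $\theta^\alpha_{K,\bar u}$ as a conjunct, with conjunctions taken at limits, exactly as in $\phi^{\alpha+1}_{M,\bar a}$.

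Once you have the lemma, the outer induction is unnecessary. We have $[\theta^\alpha_{M,\bar a}(\bar a)]_M=E\bar a$, and invariance of this single ceu formula of rank $\le\alpha$ gives $[\theta^\alpha_{M,\bar a}(\bar b)]_N=E\bar b$, i.e.\ $H^\alpha$-agreement.

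Finally, a caveat that applies equally to the paper's ``trivial'' base case.
\begin{itemize}
\item Invariance in $E\bar a$ only sees restrictions to $E\bar a$, and entries of $\bar a$ may have larger extent. What one really gets at level $0$ is therefore $I_{M,\bar a\on E\bar a}=I_{N,\bar b\on E\bar b}$.
\item The restriction lemma for the $H$'s holds at level $0$ only in the form $I_{M,\bar a\on p}(\psi,f)=I_{M,\bar a}(\psi,f)\land p$.
\item $\bar a\bar c$ in the definition of $H^{\alpha+1}$ must be read as $(\bar a\bar c)\on E\bar c$ for your value computation to match it.
\end{itemize}
With these readings your argument goes through.
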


\begin{proof} We work by induction on $\alpha$. The case $\alpha=0$ and the cases for limit $\alpha$ are trivial. So, suppose the proposition holds for $\alpha$ and we have
  $(M,\bar{a})\equiv^{\squ,\ceu}_{\alpha+1} (N,\bar{b})$.

  By the definition of the functions $H^{\cdot}_{.,.}$, if $K\in\psh{\Omega,L}$,  $\bar{s}\in {^{\lh(\bar{a})} |K|}$, $\bar{t}\in {^{<\mu} |K|}$ and $\beta<\alpha$ we have
  $H^{\alpha+1}_{M,\bar{a}}(K,\bar{s}\bar{t},\beta) = H^{\beta+1}_{M,\bar{a}}(K,\bar{s}\bar{t},\beta)$ and
  $H^{\alpha+1}_{N,\bar{b}}(K,\bar{s}\bar{t},\beta) = H^{\beta+1}_{N,\bar{b}}(K,\bar{s}\bar{t},\beta)$.
  Since $(M,\bar{a})\equiv^{\squ,\ceu}_{\alpha+1} (N,\bar{b})$ we have $(M,\bar{a})\equiv^{\squ,\ceu}_{\beta+1} (N,\bar{b})$.
  Since $\beta+1\le \alpha$ the proposition holds for $\beta+1$ and we have
  $H^{\beta+1}_{M,\bar{a}}(K,\bar{s}\bar{t},\beta) = H^{\beta+1}_{N,\bar{b}}(K,\bar{s}\bar{t},\beta)$. Thus
  $H^{\alpha+1}_{M,\bar{a}}(K,\bar{s}\bar{t},\beta) = H^{\alpha+1}_{N,\bar{b}}(K,\bar{s}\bar{t},\beta)$.
  
  So what remains to be shown is that if $K\in \psh{\Omega,L}$,  $\bar{s}\in {^{\lh(\bar{a})} |K|}$ and $\bar{t}\in {^{<\mu} |K|}$, then
  $H^{\alpha+1}_{M,\bar{a}}(K,\bar{s}\bar{t},\alpha) = H^{\alpha+1}_{N,\bar{b}}(K,\bar{s}\bar{t},\alpha)$. 

  For each $\bar{c} \in  {^{\lh(\bar{t})} |M\on E\bar{t}|} $ define an equivalence relation $\sim^c$ on the formulae of $L^{\alpha,\ceu}$
  with $\lh(\bar{a})+\lh(\bar{t})$ variables by setting 
  \[ \sigma \sim^c \chi \hbox{ if and only if } [\squaresub{[\sigma(\bar{s}\bar{t})]_K} \sigma(\bar{a}\bar{c})]_M =
    [\squaresub{[\chi(\bar{s}\bar{t})]_K} \chi(\bar{a}\bar{c})]_M . \]

    Note that if $[ \squaresub{[\sigma(\bar{s}\bar{t})]_K} \sigma(\bar{a}\bar{c}) ]_M = E\bar{c}$ and $\sigma \sim^c \chi$
    then $[ \squaresub{[\chi(\bar{s}\bar{t})]_K} \chi(\bar{a}\bar{c}) ]_M = E\bar{c}$.

    For each $\bar{c} \in  {^{\lh(\bar{t})} |M\on E\bar{t}|} $ and each $\sim^c$-class $Y$ choose $\psi_{c,Y} \in Y$. Let
    \[\Psi = \setof{\psi_{c,Y}}{\bar{c} \in  {^{\lh(\bar{t})} |M\on E\bar{t}|}\sss\sss\&\sss\sss Y\hbox{ is a }\sim^c\hbox{-class}}.\]
    
  Let $\bar{x}$ be a tuple of variables with  $\lh(\bar{x}) = \lh(\bar{t})$. 

  Let $\phi(\bar{a}\bar{x})$ be the $L^{\alpha,\ceu}$-formula $\Wedge_{\psi\in \Psi} \squaresub{[\psi(\bar{s}\bar{t})]_K} \psi(\bar{a}\bar{x})$.\footnote{Here
  is where our proof may break with the ``Important Assumption'' on the extent of constants and hence formulae.}
  Applying Corollary (\ref{coroll_forced_fmlae_sets}) to $\phi(\bar{a},\bar{x})$ we have
  \[ [\exists \bar{x} \phi(\bar{a}\bar{x})]_M \land E\bar{t} =
  \Vee \setof{ E\bar{c}}{\bar{c}\in  {^{\lh(\bar{t})} |M\on E\bar{t}|} \sss\sss\&\sss\sss [\phi(\bar{a}\bar{c})]_M = E\bar{c}} .\]

  Now, for any $\bar{c}\in  {^{\lh(\bar{t})} |M\on E\bar{t}|}$ we have $[\phi(\bar{a}\bar{c})]_M = E\bar{c}$ if and only if for each $\psi\in \Psi$ we have
  $[ \squaresub{[\psi(\bar{s}\bar{t})]_K} \psi(\bar{a}\bar{c}) ]_M = E\bar{c}$. Moreover, by the remarks following Definition (\ref{defn_squaresub_p}), for each such $\psi$,
  $E \squaresub{[\psi(\bar{s}\bar{t})]_K} \psi(\bar{a}\bar{c}) = E\bar{c}$ and
  \[ [\squaresub{[\psi(\bar{s}\bar{t})]_K} \psi(\bar{a}\bar{c})]_M = E\squaresub{[\psi(\bar{s}\bar{t})]_K} \psi(\bar{a}\bar{c}) \hbox{ if and only if }
  [\psi(\bar{s}\bar{t})]_K \land E\bar{c} = [\psi(\bar{a}\bar{c})]_M .\]
  This gives us that 
  \begin{eqn}\begin{split}
      [\exists \bar{x} \sss \phi(\bar{a}\bar{x})]_M \land E\bar{t} = & \Vee\setof{E\bar{c}}{ \bar{c}\in  {^{\lh(\bar{t})} |M\on E\bar{t}|} \sss\sss\&\sss\sss \\
        & \qquad \qquad \qquad [ \Wedge_{\psi\in \Psi} \squaresub{[\psi(\bar{s}\bar{t})]_K} \psi(\bar{a}\bar{c}) ] = E\bar{c} }\\
      = & \Vee\setof{E\bar{c}}{ \bar{c}\in  {^{\lh(\bar{t})} |M\on E\bar{t}|} \sss\sss\&\sss\sss \\
        & \qquad \qquad \qquad \Wedge_{\psi\in \Psi} [\squaresub{[\psi(\bar{s}\bar{t})]_K} \psi(\bar{a}\bar{c}) ] = E\bar{c} }\\
       = & \Vee\setof{E\bar{c}}{ \bar{c}\in  {^{\lh(\bar{t})} |M\on E\bar{t}|} \sss\sss\&\sss\sss \\
        & \qquad \qquad \qquad  \forall \psi\in \Psi \sss\sss  [\psi(\bar{s}\bar{t})]_K \land E\bar{c} = [\psi(\bar{a}\bar{c})]_M } \\
      = & \Vee\setof{E\bar{c}}{ \bar{c}\in  {^{\lh(\bar{t})} |M\on E\bar{t}|} \sss\sss\&\sss\sss \\
      & \qquad \qquad \qquad  \forall \psi\in L^{\alpha,\ceu} \sss\sss  [\psi(\bar{s}\bar{t})]_K \land E\bar{c} = [\psi(\bar{a}\bar{c})]_M } \\
      = & \Vee\setof{E\bar{c}}{ \bar{c}\in  {^{\lh(\bar{t})} |M\on E\bar{t}|} \sss\sss\&\sss\sss (M,\bar{a}\bar{c})\equiv^{\squ,\ceu}_\alpha (K,\bar{s}\bar{t}\on E\bar{c}) } \\
      = & \Vee\setof{E\bar{c}}{ \bar{c}\in  {^{\lh(\bar{t})} |M\on E\bar{t}|} \sss\sss\&\sss\sss H^\alpha_{M,\bar{a}\bar{c}} = H^\alpha_{K,\bar{s}\bar{t}\on E\bar{c}} } \\
      = & H^{\alpha+1}_{M,\bar{a}}(K,\bar{s}\bar{t},\alpha), 
    \end{split}
  \end{eqn}where the first equality comes from Corollary (\ref{coroll_forced_fmlae_sets}),
  the third inequality comes from the displayed immediately above this sequence,
  the fourth equality comes from the note after the definition of the $\sim^c$,
  the fifth equality comes directly from the definition of $\equiv^{\squ,\ceu}_\alpha$ and the sixth, by induction, from the Proposition for
      $M$, $\bar{a}\bar{c}$, $K$, $\bar{s}\bar{t}\on E\bar{c}$ and $\alpha$.

      In the same way,
      $ [\exists \bar{x} \phi(\bar{b}\bar{x})]_N \land E\bar{t} = H^{\alpha+1}_{N,\bar{b}}(K,\bar{s}\bar{t},\alpha) $.

      By the hypothesis of the proposition, $(M,\bar{a}) \equiv^{\squ,\ceu}_{\alpha+1} (N,\bar{b})$. Since
      $\exists \bar{x} \sss \phi(\bar{a}\bar{x})$ is an $ L^{\alpha+1,\ceu}$-formula we have 
      $ [\exists \bar{x} \sss \phi(\bar{a}\bar{x})]_M \land E\bar{t} = [\exists \bar{x} \sss \phi(\bar{b}\bar{x})]_N \land E\bar{t}$. Hence
      $H^{\alpha+1}_{M,\bar{a}}(K,\bar{s}\bar{t},\alpha) = H^{\alpha+1}_{N,\bar{b}}(K,\bar{s}\bar{t},\alpha)$, as required.      
    \end{proof}

\section{Invariance of formulae under refined partial isomorphisms}\label{Towards_Karp's_theorem}

\begin{definition} Suppose $M$, $N \in\psh{\Omega,L}$, $h:M \longrightarrow N$ is a
  presheaf morphism, $\phi(\bar{v})$ is an $L^{\squ}$-formula and $p\in \Omega$.
  We say $\phi(\bar{v})$ is \emph{invariant under} $h$ \emph{in} $p$
  if \[ \hbox{ for all }\bar{a}\in {{}^{\lh(v)}|\dom(h)\on p|}\hbox{ we have }
  [\phi(\bar{a})]_M = [\phi(h``\bar{a})]_N .\]
  We say $\phi(\bar{v})$ is \emph{invariant under} $h$ if it is invariant under $h$ \emph{in} $\top$. 

  We also sometimes say, and this is the usage employed in \cite{BMPI}, that
  $\phi$ is \emph{preserved by} $h$ \emph{in} $p$, resp.~\emph{preserved by} $h$.
\end{definition}

 \begin{lemma}\label{pres_interp_by_big_conns} Let $p\in \Omega$.
      Suppose $h:M \longrightarrow N$ is a presheaf morphism. Let $\diamondsuit \in\set{\Wedge,\Vee}$. Let
      $\Phi$ be a collection of $L^{\squ}$-formulae each of which is invariant under $h$ in $p$.
      Then $\diamondsuit \Phi$ is invariant under $h$ in $p$.
     \end{lemma}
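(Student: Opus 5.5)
The plan is a direct unwinding of the interpretation of $\Wedge\Phi$ and $\Vee\Phi$ in a presheaf structure, as given in \cite{BMPI}, Definition (2.20)/(2.23). Write $\diamondsuit\Phi(\bar{v})$ with a common tuple $\bar{v}$ of free variables for the members of $\Phi$, as required by the formation rules in Definition (\ref{defn_L_kappa_lambda;mu}). The recursive clause gives, for any structure $K$ and $\bar{e}\in {^{\lh(\bar{v})}|K|}$,
\[ [\diamondsuit\Phi(\bar{e})]_K = E\bar{e}\land E(\diamondsuit\Phi)\land \diamondsuit_{\phi\in\Phi}[\phi(\bar{e})]_K ,\]
where $E(\diamondsuit\Phi)=\Wedge_{\phi\in\Phi}E\phi$ is the extent of the constants occurring. (If the definition in \cite{BMPI} omits the explicit extent factors, the same argument runs with them dropped.)

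Fix $\bar{a}\in {^{\lh(\bar{v})}|\dom(h)\on p|}$. First I will check that $E\bar{a}=E(h``\bar{a})$. This holds because $h$ is a presheaf morphism, so it preserves extents; in the partial isomorphism case this is also Lemma (\ref{ext_fns_to_psh_monos}). The factor $E(\diamondsuit\Phi)$ is purely syntactic, so it is identical on both sides.

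Next, by hypothesis each $\phi\in\Phi$ is invariant under $h$ in $p$. Hence $[\phi(\bar{a})]_M=[\phi(h``\bar{a})]_N$ for every $\phi\in\Phi$. It follows that the families $\setof{[\phi(\bar{a})]_M}{\phi\in\Phi}$ and $\setof{[\phi(h``\bar{a})]_N}{\phi\in\Phi}$ coincide termwise, so their infima and their suprema agree in the complete Heyting algebra $\Omega$. Combining the three equal factors gives $[\diamondsuit\Phi(\bar{a})]_M=[\diamondsuit\Phi(h``\bar{a})]_N$, which is the required invariance in $p$.

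The only point needing care is the bookkeeping of free variables. A formula $\phi\in\Phi$ may, as written, mention only some of the variables in $\bar{v}$, so $[\phi(\bar{a})]_M$ must be read as evaluation at the corresponding subtuple. Invariance of $\phi$ in $p$ applies to that subtuple, which still lies in $|\dom(h)\on p|$, and $h``$ commutes with taking subtuples. I also need to track whether the definition multiplies by the extent of the full tuple $E\bar{a}$ or only of the subtuple. Since $E\bar{a}=E(h``\bar{a})$ and extents of subtuples are likewise preserved, either convention gives equal outer factors on both sides. I expect this bookkeeping to be the only obstacle; the lattice-theoretic step itself is immediate.
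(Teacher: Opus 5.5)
Your proposal is correct and follows essentially the same route as the paper: unwind the recursive clause for $\diamondsuit\Phi$, use that $h$ preserves extents, and apply the invariance of each $\phi\in\Phi$ termwise. The only difference is that the paper writes the clause as $E(\bar{a}\on q)\land\diamondsuit_{\phi\in\Phi}[\phi(\bar{a}\on q)]_M$ with $q=\Wedge_{\phi\in\Phi}E\phi$, so it also uses that $h$ commutes with restrictions, $h``(\bar{a}\on q)=(h``\bar{a})\on q$; this too follows from $h$ being a presheaf morphism.
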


    \begin{proof}  Suppose $\bar{a}\in |M\on p|^n$. Let $q=\Wedge \setof{E\phi}{\phi \in \Phi}$.
         Then, using the definition of a presheaf morphism,
      \begin{eqn}
        \begin{split}
          [\diamondsuit\setof{\phi(\bar{a})}{\phi \in \Phi}]_M & = E(\bar{a} \on q) \land \diamondsuit
          [\setof{\phi(\bar{a}\on q)}{\phi \in \Phi}]_M \\
          & = E(\bar{a} \on q) \land \diamondsuit 
          [\setof{\phi(h``\bar{a}\on q)}{\phi \in \Phi}]_N \\
          & = [\diamondsuit\setof{\phi(h``\bar{a})}{\phi \in \Phi}]_M
        \end{split}
      \end{eqn}
                \end{proof}

\begin{proposition}\label{invariance_of_fml_level_by_level} Let $\alpha$ be an ordinal.
  For all $p\in \Omega$, all unnested $L^{\squ}$-formulae $\phi(\bar{v})$ with $d(\phi) = \alpha$
  and all $h\in Q_\alpha(p)$ we have $\phi$ is invariant under $h$ in $p$.
\end{proposition}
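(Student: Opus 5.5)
We argue by induction on the construction of unnested $L^{\squ}$-formulae, proving the slightly stronger statement that for every unnested $\phi$ with $d(\phi)\le\alpha$, every $p\in\Omega$ and every $h\in Q_\alpha(p)$, $\phi$ is invariant under $h$ in $p$. The stronger form costs nothing, since $Q_\alpha(p)\subseteq Q_\beta(p)$ for $\beta\le\alpha$, and it is what the induction needs, because subformulae of a conjunction may have smaller degree. We also first check that invariance in $p$ is downward monotone in $p$, using $\dom(h)\on q\subseteq\dom(h)\on p$ for $q\le p$.

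\textbf{Base case.} Let $\phi$ be unnested atomic. Then $d(\phi)=0$, and $h\in Q_0(p)=Q_0$ is a partial isomorphism. By Definition (\ref{L_partial_isom}), with $h$ extended to $\tup{\dom(h)}$ by Lemma (\ref{ext_fns_to_psh_monos}), every unnested atomic formula is invariant under $h$ in $\top$, and so in $p$.

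\textbf{Propositional steps.} For $\neg\psi$ and $\psi\to\chi$ the interpretation is $E\bar{a}\land E\phi$ met with a Heyting combination of the values of the immediate subformulae. Since $h$ preserves extents, invariance passes through directly. For $\squaresub{q}\psi$ the value is $E\bar{a}\land E\psi\land([\psi(\bar{a})]_M\leftrightarrow q)$, and again $E\bar{a}=Eh``\bar{a}$, so invariance of $\psi$ gives invariance of $\squaresub{q}\psi$. For $\Wedge\Phi$ and $\Vee\Phi$ we apply Lemma (\ref{pres_interp_by_big_conns}) to the members of $\Phi$, each of which has degree at most $d(\phi)$ and so is invariant by the induction hypothesis. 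None of these steps changes the degree.

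\textbf{Quantifier step (main work).} Let $\phi=\exists V\psi$ with $d(\psi)=\beta$, so $d(\phi)=\beta+1\le\alpha$ and $h\in Q_{\beta+1}(p)$. Fix $\bar{a}\in{}^{n}|\dom(h)\on p|$ and put $\bar{b}=h``\bar{a}$. Then $[\exists V\psi(\bar{a})]_M$ is $E\bar{a}$ met with $\Vee_{\bar{c}}[\psi(\bar{a}\bar{c})]_M$, where $\bar{c}$ ranges over $V$-tuples from $|M|$. By the presheaf identities we may restrict each $\bar{c}$ to $E\bar{a}$, so that $E\bar{c}\le E\bar{a}\le p$ and $\bar{c}\in|M\on p|^{<\mu}$.

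The (Forth) clause of Definition (\ref{Q_mu_alpha_p}) then gives a family $(h_j,q_j,\bar{d}_j)_{j\in J}$ with
\begin{itemize}
\item $h_j\in Q_\beta(q_j)$ and $h_{q_j}\subseteq h_j$,
\item $h_j``(\bar{c}\on q_j)=\bar{d}_j$,
\item $\Vee_j q_j=E\bar{c}$.
\end{itemize}
We need $\bar{a}\on q_j\subseteq\dom(h_j)$ with $h_j(\bar{a}\on q_j)=\bar{b}\on q_j$. This is intended to follow from $h_{q_j}\subseteq h_j$ together with the extension of $h$ in Lemma (\ref{ext_fns_to_psh_monos}). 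Checking this will require unpacking the notation $h_q$ (here read as restriction to sections of extent $q$) and possibly passing to the generated subpresheaf. I expect this bookkeeping to be the main obstacle.

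Granting it, the induction hypothesis for $\psi$ at level $\beta$ and $q_j$ gives $[\psi(\bar{a}\bar{c}\on q_j)]_M=[\psi(\bar{b}\bar{d}_j)]_N$. Infinite distributivity in $\Omega$ then yields
\[[\psi(\bar{a}\bar{c})]_M=\Vee_j\big([\psi(\bar{a}\bar{c})]_M\land q_j\big)=\Vee_j[\psi(\bar{b}\bar{d}_j)]_N\le[\exists V\psi(\bar{b})]_N.\]
Taking the supremum over $\bar{c}$ gives $[\phi(\bar{a})]_M\le[\phi(\bar{b})]_N$. The reverse inequality follows symmetrically from the (Back) clause. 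The universal quantifier is handled in the same way with infima: each $\bar{d}$ in $N$ is covered by the back images of tuples $\bar{c}_j$, so
\[[\psi(\bar{b}\bar{d})]_N\ge\Vee_j[\psi(\bar{a}\bar{c}_j)]_M\ge\Wedge_{\bar{c}}[\psi(\bar{a}\bar{c})]_M.\]
If the paper's clause for $\forall$ in a Heyting-valued setting includes an extent guard, we take that guard into account, using $E\bar{d}=\Vee_j q_j$.
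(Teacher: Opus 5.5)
Your proposal is correct and follows essentially the same route as the paper. Both argue by induction on degree and then on complexity: atomic formulae via the definition of partial isomorphism, the connectives and $\squaresub{q}$ by direct computation, and the quantifier steps by splitting each witness with the (Forth)/(Back) families, applying the induction hypothesis to $h_j\in Q_\beta(q_j)$ and reassembling via $\Vee_j q_j=E\bar{c}$; your ``$d(\phi)\le\alpha$'' form only makes explicit the monotonicity of the $Q_\alpha(p)$ that the paper uses tacitly.

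The bookkeeping you flag is immediate from your own restriction of witnesses to $E\bar{a}$. That restriction gives $q_j\le E\bar{c}\le E\bar{a}$, so each $a_i\on q_j$ has extent exactly $q_j$ and lies in the domain of $h_{q_j}\subseteq h_j$, whence $h_j(a_i\on q_j)=h(a_i)\on q_j=b_i\on q_j$. The paper dispatches this in a one-line parenthetical. For $\forall$, the extent guard is handled exactly as you anticipate: with $x=[\forall V\psi(\bar{a})]_M$ one gets $x\land E\bar{d}=\Vee_j(x\land q_j)\le[\psi(\bar{b}\bar{d})]_N$, using $E\bar{c}_j=q_j$.
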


\begin{proof}
    We work by induction on quantifier degree of unnested formulae.
    Within a given degree we work by induction on the complexity of formulae.

    
    If $\phi$ is one of $\neg\psi$, $\psi \to \chi$, $\Wedge \Phi$ or $\Vee \Phi$
    where $\psi$, $\chi$ and all of the elements of $\Phi$ are invariant under $h$ at $p$
    then $\phi$ is also invariant under $h$ in $p$.
    In the first two cases $\phi$ this is by
    \cite{BMPI}, Lemma (3.6) and Lemma (3.8), respectively, and in the latter two by Lemma (\ref{pres_interp_by_big_conns}).

    If $p\in\Omega$, $\phi$ is $\squaresub{p}\psi$ and $\psi$ is invariant under $h$ at $p$ then
    \begin{eqn}\begin{split} [\phi(\bar{a})]_M & = E\psi\land E\bar{a}\land ([\phi(\bar{a})]_M \longleftrightarrow p) \\
        & = E\psi\land E\bar{b}\land ([\phi(\bar{b})]_N \longleftrightarrow p) = [\phi(\bar{b})]_N.
      \end{split}    \end{eqn}
    
    Notice also that Lemma (\ref{pres_interp_by_big_conns}) deals with the case of limit ordinal degree.
    
    Consequently, it only remains to address atomic formulae
    and formulae  of a successor rank which are formed by
    existential or universal quantification of a formulae with lesser rank.
  
If $\phi$ is an unnested atomic formula it is of one of the forms
$v_i=v_j$, $v_i = c$, $R(v_{i_0},\dots,v_{i_{n-1}})$ or $v_{i_n} = f(v_{i_0},\dots,v_{i_{n-1}})$, where the
$v_i$ are variables, $c$ is a constant symbol, $R$ is an $n$-ary relation symbol and $f$ is an $n$-ary function symbol. 

These formulae are all invariant under partial isomorphisms, either by the definition of partial isomorphism
or by Proposition (\ref{useful_prop}).

Next, we consider the case of formulae with a leading existential quantifier.
Suppose $\beta<\alpha$, $U$ is a non-empty set of variables of size less than $\lambda$
$\phi(U) = \exists V\psi(V,U)$, $d(\psi)=\beta$ (resp.~$r(\phi)=\beta$) and
$\bar{a} \in {^{\lh(U)}|M\on p|}$.

By \cite{BMPI}, Lemma (2.21) we have $[\phi(\bar{a})]_M \le E\bar{a} \land E\phi$.

As $E\bar{a}\le p$ we have\begin{eqn}
  \begin{split} [\phi(\bar{a})]_M & = [\phi(\bar{a})]_M \land p \\
    & = \Vee_{\bar{t}\in {^{\lh(V)}|M|}} [\psi(\bar{t},\bar{a})]_M\land p \\
    & = \Vee_{\bar{t}\in {^{\lh(V)}|M|}} [\psi(\bar{t}\on p,\bar{a})]_M \\
    & = \Vee_{\bar{t}\in {^{\lh(V)}|M\on p|}} [\psi(\bar{t},\bar{a})]_M .\\
  \end{split}
\end{eqn}Now we apply the Forth property for each $\bar{t}\in {^{\lh(V)}|M\on p|}$.

There is some $\setof{(h^{\bar{t}}_j,q^{\bar{t}}_j,d^{\bar{t}}_j)}{j\in J^{\bar{t}}}$ such that for all 
$\Vee_{j\in J^{\bar{t}}} q^{\bar{t}}_j = E\bar{t}$ and 
for all $j\in J^{\bar{t}}$ we have $ h^{\bar{t}}_{q_j} \subseteq h^{\bar{t}}_j$ and
$\bar{t}\on q^{\bar{t}}_j \subseteq \dom (h^{\bar{t}}_j)$.

    Applying the induction hypothesis for each $\bar{t}\on q^{\bar{t}}_j$, we have 
    \[ [\psi(\bar{t}\on q^{\bar{t}}_j,\bar{a}\on q^{\bar{t}}_j)]_M =
       [\psi(h^{\bar{t}}_j``\bar{t}\on q^{\bar{t}}_j,h^{\bar{t}}_j``\bar{a}\on q^{\bar{t}}_j)]_N .\]

\begin{eqn} 
  \begin{split}\hbox{So, }\sss\sss [\phi (\bar{a})]_M & = 
      \Vee_{\bar{t}\in {^{\lh(V)}|M\on p|}} \sss\sss \Vee_{j\in J^{\bar{t}}}\sss\sss
          [\psi(h^{\bar{t}}_j``\bar{t}\on q^{\bar{t}}_j,h^{\bar{t}}_j``\bar{a}\on q^{\bar{t}}_j)]_N  \\
   & \le
      \Vee_{\bar{s} \in {^{\lh(V)}|N\on p|}} \sss\sss \Vee_{\bar{t}\in {^{\lh(V)}|M\on p|}} \sss\sss \Vee_{j\in J^{\bar{t}}} \sss\sss
          [\psi(\bar{s} \on q^{\bar{t}}_j,h^{\bar{t}}_j``\bar{a}\on q^{\bar{t}}_j)]_N  \\
   & =
      \Vee_{\bar{s} \in {^{\lh(V)}|N\on p|}} \sss\sss \Vee_{\bar{t}\in {^{\lh(V)}|M\on p|}} \sss\sss \Vee_{j\in J^{\bar{t}}} \sss\sss
          [\psi(\bar{s} \on q^{\bar{t}}_j,h``\bar{a}\on q^{\bar{t}}_j)]_N \\
          & \qquad\qquad\qquad \hbox{ (since $h^{\bar{t}}_j$ extends $h_{q^{\bar{t}}_j}$ which comes from $h$)}\\
   & =
      \Vee_{\bar{s} \in {^{\lh(V)}|N\on p|}} \sss\sss \Vee_{\bar{t}\in {^{\lh(V)}|M\on p|}} \sss\sss \Vee_{j\in J^{\bar{t}}} \sss\sss
          ( q^{\bar{t}}_j \land [\psi(\bar{s},h``\bar{a})]_N ) \\
   & =
      \Vee_{\bar{s} \in {^{\lh(V)}|N\on p|}} \sss\sss \Vee_{\bar{t}\in {^{\lh(V)}|M\on p|}} \sss\sss \big( ( \Vee_{j\in J^{\bar{t}}} 
          q^{\bar{t}}_j) \land [\psi(\bar{s},h``\bar{a})]_N \big) \\      
   & = \Vee_{\bar{s} \in {^{\lh(V)}|N\on p|}} [\psi(\bar{s},h``\bar{a})]_N  = [\phi(h``\bar{a})]_N,
  \end{split}
    \end{eqn}where the penultimate equality holds since 
          \[ [\psi(\bar{s},h``\bar{a})]_N \le E\bar{s}
            \le Eh``\bar{t} = E\bar{t} .\]

Thus $[\phi(\bar{a})]_M \le [\phi(h``\bar{a})]_N$.

We prove $[\phi(h``\bar{a})]_N \le [\phi(\bar{a})]_M $ similarly,
using the ``Back'' property and the fact that the $h^{\bar{s}}_j$ are
one to one.

Finally, we consider the case of formulae with a leading universal quantifier.
Suppose $\beta<\alpha$, $U$ is a non-empty set of variables of size less than $\lambda$
$\phi(U) = \forall V\psi(V,U)$, $d(\psi)=\beta$ (resp.~$r(\phi)=\beta$) and
$\bar{a} \in {^{\lh(U)}|M\on p|}$.

Again, by \cite{BMPI}, Lemma (2.21) we have $[\phi(\bar{a})]_M \le E\bar{a} \land E\phi$.

As $E\bar{a}\le p$ we have\begin{eqn}
  \begin{split} [\phi(\bar{a})]_M & = [\phi(\bar{a})]_M \land p \\
    & = E\phi \land E\bar{a} \land \Wedge_{\bar{t}\in {^{\lh(V)}|M|}} ( E\bar{t} \to [\psi(\bar{t},\bar{a})]_M ) \\
    & = E\phi \land E\bar{a} \land \Wedge_{\bar{t}\in {^{\lh(V)}|M\on p|}} ( E\bar{t} \to [\psi(\bar{t},\bar{a})]_M ) \\
  \end{split}
\end{eqn}Now we apply the ``Forth'' property for each $\bar{t}\in {^{\lh(V)}|M\on p|}$.

There is some $\setof{(h^{\bar{t}}_j,q^{\bar{t}}_j,d^{\bar{t}}_j)}{j\in J^{\bar{t}}}$ such that for all 
$\Vee_{j\in J^{\bar{t}}} q^{\bar{t}}_j = E\bar{t}$ and 
for all $j\in J^{\bar{t}}$ we have $ h^{\bar{t}}_{q_j} \subseteq h^{\bar{t}}_j$ and
$\bar{t}\on q^{\bar{t}}_j \subseteq \dom (h^{\bar{t}}_j)$.

So 
\[ \Vee_{j\in J^{\bar{t}}} \sss [\psi(\bar{t}\on q^{\bar{t}}_j,\bar{a}\on q^{\bar{t}}_j)]_M 
= (\Vee_{j\in J^{\bar{t}}} \sss q^{\bar{t}}_j) \land [\psi(\bar{t},\bar{a})]_M = [\psi(\bar{t},\bar{a})]_M ,\]
since $[\psi(\bar{t},\bar{a})]_M \le E\bar{t} $.

Applying the induction hypothesis for each $\bar{t}\on q^{\bar{t}}_j$, we have 
    \[ [\psi(\bar{t}\on q^{\bar{t}}_j,\bar{a}\on q^{\bar{t}}_j)]_M =
       [\psi(h^{\bar{t}}_j``\bar{t}\on q^{\bar{t}}_j,h^{\bar{t}}_j``\bar{a}\on q^{\bar{t}}_j)]_N .\]

       Hence \[ E\bar{t} \to [\psi(\bar{t},\bar{a})]_M =
        ( \Vee_{j\in J^{\bar{t}}} E\bar{t} \on q^{\bar{t}}_j ) \to
       \Vee_{j\in J^{\bar{t}}} \sss [\psi(h^{\bar{t}}_j``\bar{t}\on q^{\bar{t}}_j,h^{\bar{t}}_j``\bar{a}\on q^{\bar{t}}_j)]_N .\]

       Thus

\begin{eqn}
  \begin{split} [\phi(\bar{a})]_M = & \sss E\psi \land E\bar{a} \land {} \\
       &  \Wedge_{\bar{t}\in {^{\lh(V)}|M\on p|}} \big( ( \Vee_{j\in J^{\bar{t}}} E\bar{t} \on q^{\bar{t}}_j ) \to
       \Vee_{j\in J^{\bar{t}}} \sss [\psi(h^{\bar{t}}_j``\bar{t}\on q^{\bar{t}}_j,h^{\bar{t}}_j``\bar{a}\on q^{\bar{t}}_j)]_N \big) \\
       \ge  & \sss E\psi \land E\bar{a} \land {} \\
       & \Wedge_{\bar{s}\in {^{\lh(V)}|N\on p|}} \big( ( \Vee_{j\in J^{\bar{s}}} E\bar{s} \on q^{\bar{t}}_j ) \to
       \Vee_{j\in J^{\bar{t}}} \sss [\psi(\bar{s}\on q^{\bar{t}}_j,h^{\bar{t}}_j``\bar{a}\on q^{\bar{t}}_j)]_N \big) \\
       = & \sss [\phi(h``\bar{a})]_N.
  \end{split}
\end{eqn}

The proof of the reverse inequality is analogous on using the ``Back'' property for ${\bar{s} \in {^{\lh(V)}|N\on p|}}$.
\end{proof}

\section{Equivalences}\label{portmanteau}

We recall that in this section we are operating under the assumption that our languages $L$ are over a signature for which
all constant symbols have extent $\top$.

\begin{proposition}\label{portmanteau_prop}
  Suppose $M$ and $N$ are
  structures in $\psh{\Omega,L}$, $\bar{a}\in {^{<\lambda} |M|}$, $\bar{b}\in {^{\lh(\bar{a})} |N|}$,
  $E\bar{a}=E\bar{b}$ and $\alpha\in \On$.
  The following are equivalent.
  \begin{enumerate}
  \item\label{item_equiv} $(M,\bar{a})\equiv^{\squ}_\alpha (N,\bar{b})$ \vskip6pt
    \item\label{item_equiv_ceu} $(M,\bar{a})\equiv^{\squ,\ceu}_\alpha (N,\bar{b})$ \vskip6pt
    \item\label{item_Hs_equal} $H^\alpha_{M,\bar{a}} = H^\alpha_{N,\bar{b}}$ \vskip6pt
    \item\label{item_Fs_are_equal} $F^\alpha_{M,\bar{a}} = F^\alpha_{N,\bar{b}}$ \vskip6pt
               \item\label{item_sim} $(M,\bar{a})\sim^{E\bar{a}}_\alpha (N,\bar{b})$ \vskip6pt
    \item\label{item_game_condit} Player II has a winning strategy for the game
      $G^\mu_\alpha(M,\bar{a},N,\bar{b},h)$ 
 \end{enumerate}
  \end{proposition}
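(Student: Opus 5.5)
We prove the proposition as a circle of implications, assembling results already established. The circle is
(\ref{item_equiv}) $\Rightarrow$ (\ref{item_equiv_ceu}) $\Rightarrow$ (\ref{item_Hs_equal}) $\Leftrightarrow$ (\ref{item_Fs_are_equal}) $\Rightarrow$ (\ref{item_sim}) $\Rightarrow$ (\ref{item_equiv}).
The remaining item, (\ref{item_game_condit}), is attached to (\ref{item_sim}) by Theorem (\ref{thm_equiv_ws_Qs}). Throughout, $h:\bar{a}\longrightarrow\bar{b}$ denotes the map $a_i\mapsto b_i$.

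The first implication, (\ref{item_equiv}) $\Rightarrow$ (\ref{item_equiv_ceu}), is trivial, since ceu formulae of modified rank at most $\alpha$ form a subcollection of $L^{\alpha,\squ}$.
The implication (\ref{item_equiv_ceu}) $\Rightarrow$ (\ref{item_Hs_equal}) is exactly Proposition (\ref{prop_equiv_implies_equality}); this is where the Important Assumption on extents of constants is needed.
The equivalence (\ref{item_Hs_equal}) $\Leftrightarrow$ (\ref{item_Fs_are_equal}) is the ``Moreover'' clause of Lemma (\ref{reln_Fs_and_Hs_tuples}).
For (\ref{item_Fs_are_equal}) $\Rightarrow$ (\ref{item_sim}) we invoke Proposition (\ref{equiv_fn_equal_and_Q_for_Fs_tuples}). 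Since $E\bar{a}=E\bar{b}$, we have $p=E\bar{a}$ and $\bar{a}\on p=\bar{a}$, $\bar{b}\on p=\bar{b}$. The proposition therefore gives $h\in Q_\alpha(E\bar{a})$, which is the definition of $(M,\bar{a})\sim^{E\bar{a}}_\alpha(N,\bar{b})$.
For (\ref{item_sim}) $\Leftrightarrow$ (\ref{item_game_condit}) we apply Theorem (\ref{thm_equiv_ws_Qs}), with $h$ the given partial isomorphism. We may replace $N$ by an isomorphic copy to arrange $|M|\cap|N|=\emptyset$, since both conditions are invariant under such a renaming.

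The main work, and the step I expect to be the obstacle, is (\ref{item_sim}) $\Rightarrow$ (\ref{item_equiv}).
Let $\phi$ be an $L^{\squ}$-formula with $r(\phi)\le\alpha$. By the unnesting proposition for $L^{\squ}$ (the analogue of Corollary (\ref{evaluation_persists_through_unnesting})), $[\phi(\bar{a})]_M=[\phi^\exists(\bar{a})]_M$ and likewise in $N$. By Proposition (\ref{rank=degree_of_ext_squ}), $d(\phi^\exists)=r(\phi)\le\alpha$. Since the $Q_\beta(p)$ decrease in $\beta$, $h\in Q_\alpha(E\bar{a})\subseteq Q_{d(\phi^\exists)}(E\bar{a})$.
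Proposition (\ref{invariance_of_fml_level_by_level}) then makes $\phi^\exists$ invariant under $h$ in $E\bar{a}$. Transporting back through the unnesting equalities gives invariance of $\phi$.

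Three points need care here.
First, Proposition (\ref{invariance_of_fml_level_by_level}) is stated for degree exactly $\alpha$. Its inductive proof really shows invariance for every unnested formula of degree at most the refinement level, so I would either note this or apply it with the exact degree using monotonicity of the $Q$'s.
Second, invariance is required for the instantiation at $\bar{a}$ itself and at its restrictions. These lie in $|\dom(h)\on E\bar{a}|$ via the extension of Lemma (\ref{ext_fns_to_psh_monos}).
Third, the $\squaresub{p}$ clause in the proof of Proposition (\ref{invariance_of_fml_level_by_level}) uses $E\bar{a}=E\bar{b}$. This holds because $h$ preserves extents.

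With these checks the circle closes, and all six conditions are equivalent.
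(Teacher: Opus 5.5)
Your proposal is correct and follows the paper's proof, which is the same circle of citations: (1)$\Rightarrow$(2) trivially, (2)$\Rightarrow$(3) by Proposition (\ref{prop_equiv_implies_equality}), (3)$\Leftrightarrow$(4) by Lemma (\ref{reln_Fs_and_Hs_tuples}), (4)$\Leftrightarrow$(5) by Proposition (\ref{equiv_fn_equal_and_Q_for_Fs_tuples}), (5)$\Leftrightarrow$(6) by Theorem (\ref{thm_equiv_ws_Qs}), and (5)$\Rightarrow$(1) by Proposition (\ref{invariance_of_fml_level_by_level}). Your additional checks make explicit steps the paper leaves implicit. These are: passing to the unnesting $\phi^\exists$ with $d(\phi^\exists)=r(\phi)$ and using monotonicity of the $Q_\beta(p)$, since that proposition is stated only for unnested formulae of exact degree; and arranging $|M|\cap|N|=\emptyset$ so the game theorem applies.
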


\begin{proof} (\ref{item_equiv}) implies (\ref{item_equiv_ceu}) is immediate.
  (\ref{item_equiv_ceu}) implies (\ref{item_Hs_equal}) is Proposition (\ref{prop_equiv_implies_equality}).
  (\ref{item_Hs_equal}) is equivalent to (\ref{item_Fs_are_equal}) is Lemma (\ref{reln_Fs_and_Hs_tuples})
  (\ref{item_Fs_are_equal}) is equivalent to (\ref{item_sim}) is Proposition (\ref{equiv_fn_equal_and_Q_for_Fs_tuples}).
  (\ref{item_sim}) is equivalent to (\ref{item_game_condit}) is Theorem (\ref{thm_equiv_ws_Qs}).
    (\ref{item_sim}) implies (\ref{item_equiv}) is Proposition (\ref{invariance_of_fml_level_by_level}).
\end{proof}

\begin{corollary}\label{generalization_Karp_thm}
Suppose $M$ and $N$ are
  structures in $\psh{\Omega,L}$.
  Then $M\equiv^{\squ}_\alpha N$ if and only if there is some
  $h:M\longrightarrow N$ with $h\in Q_\alpha(\top)$.
\end{corollary}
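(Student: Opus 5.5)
This is the special case of Proposition (\ref{portmanteau_prop}) with $\bar{a}=\bar{b}=\emptyset$. I will take both tuples to be the empty tuple, so $E\bar{a}=E\bar{b}=\top$ (the empty meet). With this choice $M\equiv^{\squ}_\alpha N$ is by definition $(M,\emptyset)\equiv^{\squ}_\alpha(N,\emptyset)$. The remaining task is to identify clause (\ref{item_sim}), $(M,\emptyset)\sim^{\top}_\alpha(N,\emptyset)$, with the existence of some $h\in Q_\alpha(\top)$.

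For ``$\Longrightarrow$'', the portmanteau equivalence of (\ref{item_equiv}) and (\ref{item_sim}) gives that the empty map $h_\emptyset:\emptyset\longrightarrow\emptyset$ lies in $Q_\alpha(\top)$. That is the required $h$. One should check in passing that the empty map is a partial isomorphism, so that it lies in $Q_0$. This holds vacuously: there are no tuples from $\tup{\emptyset}=\emptyset$ at which to test invariance of the unnested atomic formulae.

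For ``$\Longleftarrow$'', suppose some $h\in Q_\alpha(\top)$, with $h$ possibly having nonempty domain. The cleanest route is Proposition (\ref{invariance_of_fml_level_by_level}) applied directly to $h$ with $p=\top$. This gives that every unnested $L^{\squ}$-formula of degree at most $\alpha$ is invariant under $h$ in $\top$; the proposition is stated for degree exactly $\alpha$, but since $Q_\alpha(\top)\subseteq Q_\beta(\top)$ it applies at every $\beta\le\alpha$.

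In particular this covers sentences, since the empty tuple lies in $\dom(h)\on\top$. Given an arbitrary sentence $\phi\in L^{\alpha,\squ}$, pass to $\phi^\exists$. This is unnested with $d(\phi^\exists)=r(\phi)\le\alpha$ by Proposition (\ref{rank=degree_of_ext_squ}). It also has the same interpretation in $M$ and in $N$ as $\phi$, by the $L^{\squ}$ version of Corollary (\ref{evaluation_persists_through_unnesting}). Hence $[\phi]_M=[\phi^\exists]_M=[\phi^\exists]_N=[\phi]_N$, which is exactly $M\equiv^{\squ}_\alpha N$.

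The main subtle point is the bookkeeping with the empty tuple. One must check that invariance ``in $E\emptyset=\top$'' for the empty map means precisely agreement of values of sentences. One must also check that the step from an arbitrary $h\in Q_\alpha(\top)$ down to the empty map is legitimate. An alternative to the direct use of Proposition (\ref{invariance_of_fml_level_by_level}) is to restrict $h$ to the empty map and verify by induction on $\beta\le\alpha$ that $h\on\emptyset\in Q_\beta(\top)$. The back and forth witnesses for $h$ would serve, since each $h_j\supseteq h_{q_j}\supseteq(h\on\emptyset)_{q_j}$. I would use the direct route, avoiding this induction.
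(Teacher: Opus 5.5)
Your proposal is correct and is essentially the paper's implicit argument. The corollary is the case $\bar{a}=\bar{b}=\emptyset$ (so $E\bar{a}=\top$) of Proposition (\ref{portmanteau_prop}), whose link from (\ref{item_sim}) to (\ref{item_equiv}) is Proposition (\ref{invariance_of_fml_level_by_level}) together with the unnesting results; you make that unnesting step explicit. Applying Proposition (\ref{invariance_of_fml_level_by_level}) directly to an arbitrary $h\in Q_\alpha(\top)$ cleanly bridges the gap between ``some $h$'' in the statement and the empty map in the portmanteau. Your side check that the empty map lies in $Q_0$ is not needed for ``$\Longrightarrow$'', since the portmanteau already delivers $h_\emptyset\in Q_\alpha(\top)$; it would also not be quite vacuous if there were $0$-ary relation symbols.
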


\begin{remark}  We remark Corollary (\ref{generalization_Karp_thm})
  is a true generalization of Karp's original theorem, which can be recovered on taking
  $\Omega$ to be the two element complete Heyting algebra.
\end{remark}

\vskip12pt
\begin{corollary}\label{Karp_thm} Suppose $M$ and $N$ are
  structures in $\psh{\Omega,L_{\infty\lambda}}$.
  Then $F^\alpha_{M,\emptyset} = F^\alpha_{N,\emptyset}$ if and only if
  $M\equiv^{\squ}_\alpha N$.
\end{corollary}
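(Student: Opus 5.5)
This is the special case of Proposition (\ref{portmanteau_prop}) in which $\bar{a}$ and $\bar{b}$ are both the empty tuple $\emptyset$, with $M$ and $N$ taken in $\psh{\Omega,L_{\infty\lambda}}$. The plan is to check that this case is a legitimate instance of the proposition and then read off the equivalence of items (\ref{item_Fs_are_equal}) and (\ref{item_equiv}).

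First I check the hypotheses. The empty tuple lies in ${^{<\lambda}|M|}$ and in ${^{0}|N|}$. Its extent is the empty meet, so $E\emptyset=\top$ on both sides, and hence $E\bar{a}=E\bar{b}$ holds trivially. The map $h:\emptyset\longrightarrow\emptyset$ is the empty partial isomorphism. Here one should note that, under the standing assumption that all constants have extent $\top$, the constant clauses of the definition of partial isomorphism cause no problem, since the formulae $v=c$ are only tested on tuples from $\dom(h)$. By definition, $M\equiv^{\squ}_\alpha N$ is $(M,\emptyset)\equiv^{\squ}_\alpha(N,\emptyset)$, which is item (\ref{item_equiv}) for this choice. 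Item (\ref{item_Fs_are_equal}) reads $F^\alpha_{M,\emptyset}=F^\alpha_{N,\emptyset}$.

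Next I trace the circle of implications in the proof of Proposition (\ref{portmanteau_prop}) for this choice.
\begin{itemize}
\item For (\ref{item_equiv}) $\Rightarrow$ (\ref{item_Fs_are_equal}): first pass to the ceu fragment, which is immediate. Then apply Proposition (\ref{prop_equiv_implies_equality}) to obtain $H^\alpha_{M,\emptyset}=H^\alpha_{N,\emptyset}$. Finally use Lemma (\ref{reln_Fs_and_Hs_tuples}) to convert this to equality of the $F$'s.
\item For the converse: Proposition (\ref{equiv_fn_equal_and_Q_for_Fs_tuples}) with $p=E\emptyset\wedge E\emptyset=\top$ yields the empty map in $Q_\alpha(\top)$. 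Proposition (\ref{invariance_of_fml_level_by_level}) then gives invariance of every unnested formula of degree $\le\alpha$ in $\top$.
\end{itemize}

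The step needing most care is this last invariance step, because $\equiv^{\squ}_\alpha$ is phrased in terms of modified rank $r(\phi)\le\alpha$ rather than quantifier degree of unnested formulae. Here I would invoke Corollary (\ref{evaluation_persists_through_unnesting}), extended to $L^{\squ}$, to replace each $\phi$ by $\phi^\exists$ with the same interpretation. By Proposition (\ref{rank=degree_of_ext_squ}), $d(\phi^\exists)=r(\phi)\le\alpha$. Since $Q_\alpha(\top)\subseteq Q_\beta(\top)$ for $\beta\le\alpha$, Proposition (\ref{invariance_of_fml_level_by_level}) applies to each such $\phi^\exists$. This gives $[\phi]_M=[\phi]_N$ for every $L^{\alpha,\squ}$-sentence, and hence $M\equiv^{\squ}_\alpha N$.
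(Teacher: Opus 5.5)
Your proposal is correct and follows the paper's intended route: the corollary is the instance $\bar{a}=\bar{b}=\emptyset$ (so $p=E\emptyset=\top$) of Proposition (\ref{portmanteau_prop}), reading off items (\ref{item_equiv}) and (\ref{item_Fs_are_equal}) through the same chain of results the paper cites. Your explicit appeal to unnesting (Proposition (\ref{rank=degree_of_ext_squ}) and its companion on preservation of values) to pass from unnested quantifier degree to modified rank in the step from $Q_\alpha(\top)$ to $\equiv^{\squ}_\alpha$ fills in a detail the paper leaves implicit when it simply cites Proposition (\ref{invariance_of_fml_level_by_level}).
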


We conclude with the following question.
\begin{question}\label{question_can_we_avoid_the_squaresubs}  Suppose $M$ and $N$ are
  structures in $\psh{\Omega,L}$, $\bar{a}\in {^{<\lambda} |M|}$, $\bar{b}\in {^{\lh(\bar{a})} |N|}$,
  $E\bar{a}=E\bar{b}$ and $\alpha\in \On$. Does $(M,\bar{a})\equiv_\alpha (N,\bar{b})$ imply $(M,\bar{a})\equiv^{\ceu}_\alpha (N,\bar{b})$.
\end{question}

Effectively we are asking whether the $\squaresub{p}$ are intrinsic to the proof
of Proposition (\ref{portmanteau_prop}) and more specifically of Proposition (\ref{prop_equiv_implies_equality}) or whether they are avoidable
in the sense that mentions of $\equiv^{\ceu}_\alpha$ and $\equiv^{\squ}_\alpha$ could be replaced by $\equiv_\alpha$.

We conjecture the answer to the question is in general negative.

\vskip40pt


\section{Sentences}\label{the_phi_alpha_Ma}

In this section we continue operating under the assumption that our languages $L$ are over a signature for which
all constant symbols have extent $\top$.

In  Proposition (\ref{equiv_fn_equal_and_Q_for_Fs_tuples}) and Corollary (\ref{equiv_fn_eq_and_Q_for_Hs_tuples}) 
we start with a pair $(M,\bar{a})$ and show for any pair $(N,\bar{b})$ with $E\bar{b}\le E\bar{a}$ the existence of
$Q_\alpha(E\bar{b})$ partial isomorphisms from
$\bar{a} \on E\bar{b}$ to $\bar{b}$ corresponds to equality between $F^{\alpha}_{M,\bar{a}\on E\bar{b}}$ and $F^{\alpha}_{N,\bar{b}}$ and to equality between 
$H^{\alpha}_{M,\bar{a}\on E\bar{b}}$ and $H^{\alpha}_{N,\bar{b}}$.

In Proposition (\ref{equiv_fn_equal_and_Q_for_Gs_tuples}) we have equality between (restrictions of) functions for the \emph{particular}
pair of pairs consisting of  $(M,\bar{a})$ and $(N,\bar{b})$, giving a conclusion for these two pairs alone.

In this final section we will see intimate connections between the functions defined and results proved in the \S{}\ref{function_analysis} and
infinitary sentences, analogous to those typical of the usual approaches to Scott-Karp analysis of classical models,
but involving the unary connectives $\squaresub{p}$.

We use these definitions to create for each  $M\in\pshol$, $\bar{a}\in {^{\lh(\bar{a})}|M|}$ and $\alpha\in \On$ a formula $\phi^\alpha_{M,\bar{a}}$.

\begin{definition} Let $M\in\pshol$, $\bar{a}\in {^{\lh(\bar{a})}|M|}$ and $\alpha\in \On$.
  Set
  \[ \phi^0_{M,\bar{a}} = \bigwedge_{\psi\in \UA} \squaresub{[\psi(\bar{a})]_M} \psi(\bar{v}),\sss\sss\sss
  \phi^\alpha_{M,\bar{a}} = \bigwedge_{\beta<\alpha} \phi^\beta_{M,\bar{a}} \hbox{, for limit }\alpha\hbox{, and}\]
  \[ \phi^{\alpha+1}_{M,\bar{a}} = \phi^\alpha_{M,\bar{a}} \wedge\bigwedge_{K,\bar{s}\bar{t}} \squaresub{[\exists \bar{u}\phi^\alpha_{K,\bar{s}\bar{t}}(\bar{a},\bar{u})]_M}
  \exists \bar{u}\phi^\alpha_{K,\bar{s}\bar{t}}(\bar{v},\bar{u}) .\]
\end{definition}

Note, in the corresponding definition in \cite{B00} the first conjoint in the definition of (what is here called)
$\phi^{\alpha+1}$ is $\phi^0_{M,\bar{a}}$, rather than  $\phi^\alpha_{M,\bar{a}}$ as here.
The change makes it simpler to prove equivalences such as Proposition (\ref{char_box_subscript_in_terms_of_fns_tuples}) below.

Clearly, each $\phi^\alpha_{M,\bar{a}}$ is a ceu formula.


\begin{proof} We have $[\phi^\alpha_{K,\bar{s}\bar{t}}(\bar{a}\bar{c}\on p_{\bar{c}})]_M =
  [\phi^\alpha_{K,\bar{s}\bar{t}}(\bar{a}\bar{c})]_M \land p_{\bar{c}} = p_{\bar{c}} \land p_{\bar{c}} =
  p_{\bar{c}} = E\bar{c}\on p_{\bar{c}} $.
\end{proof}

We now show equivalence between interpretation of sentences derived
from the $\phi^\alpha$ and the values of the functions introduced in
the previous section and between equality of these functions and
sentences being forced to be true. One consequence is that the
$F^\alpha$ can be defined using the $\phi^\alpha$.

\begin{proposition}\label{char_box_subscript_in_terms_of_fns_tuples} Let $M\in\pshol$, $\bar{a}\in {^{\lh(\bar{a})}|M|}$
  and $\alpha\in \On$.
  Then for any $K$, $N\in \pshol$, $\bar{b}\in {^{\lh(\bar{a})}|N|}$ with $E\bar{b}\le E\bar{a}$ and
  $\bar{s}\in {^{\lh(\bar{a})}|K|}$, $\bar{t}\in {^{<\mu}|K|}$ we have
\[ [\exists \bar{u} \phi^\alpha_{K,\bar{s}\bar{t}}(\bar{a},\bar{u})]_M = F^{\alpha+1}_{M,\bar{a}}(F^\alpha_{K,\bar{s}\bar{t}}) \] and 
\[ F^\alpha_{M,\bar{a}\on E\bar{b}} = F^\alpha_{N,\bar{b}} \hbox{ if and only if }
   [\phi^\alpha_{M,\bar{a}}(\bar{b})]_N = E\bar{b} \sss\sss (\sss \equiv N \forces \phi^\alpha_{M,\bar{a}}(\bar{b})\sss ) . \]
  
\end{proposition}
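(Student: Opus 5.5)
We prove both parts simultaneously by induction on $\alpha$. The two clauses feed each other: the first clause at $\alpha$ uses the second clause at $\alpha$ (for the pairs $(M,\bar{a}\bar{c})$ versus $(K,\bar{s}\bar{t})$), and the second clause at $\alpha+1$ uses the first clause at $\alpha$.

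\emph{Second clause, $\alpha=0$.} Here $\phi^0_{M,\bar{a}}(\bar{b})$ is a conjunction over $\psi\in\UA$ of $\squaresub{[\psi(\bar{a})]_M}\psi(\bar{b})$. Each constant has extent $\top$, so $E\psi=\top$, and by Definition (\ref{defn_squaresub_p}) we have $[\squaresub{q}\psi(\bar{b})]_N=E\bar{b}$ if and only if $E\bar{b}\land q=[\psi(\bar{b})]_N$. Hence $[\phi^0_{M,\bar{a}}(\bar{b})]_N=E\bar{b}$ if and only if for all $\psi\in\UA$ (with all reindexings $f$) we have $[\psi(\bar{a})]_M\land E\bar{b}=[\psi(\bar{b})]_N$. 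Since $[\psi(\bar{a}\on E\bar{b})]_M=[\psi(\bar{a})]_M\land E\bar{b}$, this says exactly $I_{M,\bar{a}\on E\bar{b}}=I_{N,\bar{b}}$, that is, $F^0_{M,\bar{a}\on E\bar{b}}=F^0_{N,\bar{b}}$.

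\emph{Second clause, limit $\alpha$.} The conjunction $\bigwedge_{\beta<\alpha}$ equals $E\bar{b}$ iff each conjunct does. So it is immediate from the inductive hypothesis and the coherence $F^\alpha\on\beta=F^\beta$.

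\emph{First clause at $\alpha$.} Unfold the existential:
\[[\exists\bar{u}\,\phi^\alpha_{K,\bar{s}\bar{t}}(\bar{a},\bar{u})]_M=\Vee_{\bar{c}}[\phi^\alpha_{K,\bar{s}\bar{t}}(\bar{a},\bar{c})]_M.\]
Apply Corollary (\ref{coroll_forced_fmlae_sets}) (together with the unnumbered lemma preceding this proposition, which shows $[\phi^\alpha_{K,\bar{s}\bar{t}}(\bar{a}\bar{c}\on p_{\bar{c}})]_M=E(\bar{c}\on p_{\bar{c}})$). This rewrites the supremum as $\Vee\{E\bar{c}: [\phi^\alpha_{K,\bar{s}\bar{t}}(\bar{a}\bar{c})]_M=E\bar{c}\}$, where we may take $E\bar{c}\le E\bar{t}$ because the value is bounded by the extents of the parameters. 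Now invoke the second clause at $\alpha$ with $(K,\bar{s}\bar{t})$ in the role of $(M,\bar{a})$ and $(M,\bar{a}\bar{c})$ in the role of $(N,\bar{b})$. Each condition $[\phi^\alpha_{K,\bar{s}\bar{t}}(\bar{a}\bar{c})]_M=E(\bar{a}\bar{c})$ becomes $F^\alpha_{K,\bar{s}\bar{t}\on E\bar{c}}=F^\alpha_{M,\bar{a}\bar{c}}$. The resulting supremum is the defining expression for $F^{\alpha+1}_{M,\bar{a}}(F^\alpha_{K,\bar{s}\bar{t}},\alpha)$. One bookkeeping point needs care: the extent $E(\bar{a}\bar{c})=E\bar{a}\land E\bar{c}=E\bar{c}$ because $E\bar{c}\le E\bar{t}\le E\bar{a}$, and the requirement $E\bar{b}\le E\bar{a}$ in the second clause becomes $E\bar{c}\le E\bar{t}$.

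\emph{Second clause, successor $\alpha+1$.} $[\phi^{\alpha+1}_{M,\bar{a}}(\bar{b})]_N=E\bar{b}$ iff two things hold:
\begin{itemize}
\item $[\phi^\alpha_{M,\bar{a}}(\bar{b})]_N=E\bar{b}$, which by induction says $F^\alpha$ agree;
\item for every $(K,\bar{s}\bar{t})$, $[\exists\bar{u}\phi^\alpha_{K,\bar{s}\bar{t}}(\bar{b},\bar{u})]_N=E\bar{b}\land[\exists\bar{u}\phi^\alpha_{K,\bar{s}\bar{t}}(\bar{a},\bar{u})]_M$.
\end{itemize}
By the first clause at $\alpha$ applied in both $M$ and $N$, the second condition reads $F^{\alpha+1}_{N,\bar{b}}(x,\alpha)=E\bar{b}\land F^{\alpha+1}_{M,\bar{a}}(x,\alpha)$ for every $x$ in the domain. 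By the earlier restriction lemma, the right side equals $F^{\alpha+1}_{M,\bar{a}\on E\bar{b}}(x,\alpha)$; its range is bounded by $E\bar{b}$ by Lemma (\ref{fn_range_bounded_tuples}). Combining this with the agreement below $\alpha$ gives $F^{\alpha+1}_{M,\bar{a}\on E\bar{b}}=F^{\alpha+1}_{N,\bar{b}}$, and each step reverses.

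\emph{Main obstacle.} I expect the hard step to be the identity $E\bar{b}\land F^{\alpha+1}_{M,\bar{a}}(x,\alpha)=F^{\alpha+1}_{M,\bar{a}\on E\bar{b}}(x,\alpha)$, together with the matching domain issues: the domain of $F^{\alpha+1}_{M,\bar{a}\on E\bar{b}}$ requires $E\bar{t}\le E\bar{b}$, whereas the conjunction in $\phi^{\alpha+1}$ ranges over all $\bar{t}$ with $E\bar{t}\le E\bar{a}$. I would try to handle this by distributivity of $\land$ over $\Vee$. Each contributing $\bar{c}$ is replaced by $\bar{c}\on E\bar{b}$, and $\bar{t}$ by $\bar{t}\on E\bar{b}$; using the restriction lemma, one checks that the condition $F^\alpha_{K,\bar{s}\bar{t}\on E\bar{c}}=F^\alpha_{M,\bar{a}\bar{c}}$ survives restriction to $E\bar{b}$. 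This reduces the comparison to $x$ of the form $F^\alpha_{K,\bar{s}\bar{t}\on E\bar{b}}$.
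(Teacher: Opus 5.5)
Your proposal is correct and follows essentially the paper's route. The paper also runs a simultaneous induction. The first clause at $\alpha$ comes from Corollary (\ref{coroll_forced_fmlae_sets}) combined with the second clause at $\alpha$ for $(K,\bar{s}\bar{t})$ against $(M,\bar{a}\bar{c})$. The successor case of the second clause comes from the forcing characterisation of the $\squaresub{p}$-conjuncts combined with the first clause at $\alpha$ in $M$ and in $N$.

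The step you flag as the main obstacle has a shorter fix, which is what the paper tacitly uses: apply the first clause to $(M,\bar{a}\on E\bar{b})$ itself and use $[\chi(\bar{a}\on p)]_M=[\chi(\bar{a})]_M\land p$. Your reduction of $\bar{t}$ to $\bar{t}\on E\bar{b}$, for conjuncts with $E\bar{t}\not\le E\bar{b}$, is extra care on a point the paper's proof does not address.
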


\begin{proof} The proof is by double induction on $\alpha$.
  For $\alpha=0$ and for limit $\alpha$ the proof of the second conclusion
  is immediate from the definitions.

  Suppose, next, we have for each $\bar{c}\in {^{\lh(\bar{t})}|M\on E\bar{t}|}$ the second conclusion for $M$,
  $\bar{a}\bar{c}$, $\alpha$,  $K$, and $\bar{s}\bar{t}$. Using
  this for the fourth equality and Corollary (\ref{coroll_forced_fmlae_sets}) for the second,
  we have
 \begin{eqn}\begin{split} 
     [\exists \bar{u} \phi^\alpha_{K,\bar{s}\bar{t}}(\bar{a},\bar{u})]_M = &
     \bigvee \setof{ [ \phi^\alpha_{K,\bar{s}t}(\bar{a},\bar{c})]_M}{\bar{c}\in  {^{\lh(\bar{t})}|M\on E\bar{t} |}} \\
        = & \bigvee \setof{ E\bar{c} }{\bar{c}\in  {^{\lh(\bar{t})}|M\on E\bar{t} |}
       \sss\sss\&\sss\sss [\phi^\alpha_{K,\bar{s}\bar{t}}(\bar{a}\bar{c})]_M  = E\bar{c} }  \\
     = & \bigvee \setof{ E\bar{c} }{\bar{c}\in {^{\lh(\bar{t})}|M\on E\bar{t} |}
       \sss\sss\&\sss\sss F^\alpha_{K,\bar{s}\bar{t}\on E\bar{c}} = F^\alpha_{M,\bar{a}\bar{c}} }   \\
      = & \sss F^{\alpha+1}_{M,\bar{a}}(F^\alpha_{K,\bar{s}\bar{t}})
 \end{split}  \end{eqn}and thus have the first conclusion for $M$, $\bar{a}$, $\alpha$,
 $K$ and $\bar{s}\bar{t}$.

  We thus are left to prove the second conclusion in the case of $\alpha+1$.
 Firstly, by the inductive hypothesis we have that \[ F^\alpha_{M,\bar{a}\on E\bar{b}} = F^\alpha_{N,\bar{b}} \hbox{ if and only if } [\phi^\alpha_{M,\bar{a}}(\bar{b})]_N = E\bar{b}.\]
 Secondly, fix $K\in \pshol$ and $\bar{s}\in {^{\lh(\bar{a})}|K|}$, $\bar{t}\in {^{<\mu}|K|}$. 
 We have
  \begin{eqn}
  \begin{split}
  & N \forces \squaresub{ [ \exists \bar{u} \phi^\alpha_{K,\bar{s}\bar{t}}(\bar{a},\bar{u})]_M} \exists \bar{u} \phi^\alpha_{K,\bar{s}\bar{t}}(\bar{v},\bar{u}) \hbox{ if and only if }\\
      & [ \exists \bar{u} \phi^\alpha_{K,\bar{s}\bar{t}}(\bar{a},\bar{u})]_M \land E\bar{b} = [ \exists \bar{u} \phi^\alpha_{K,\bar{s}\bar{t}}(\bar{b},\bar{u})]_N \hbox{ if and only if } \\
    & F^{\alpha+1}_{M,\bar{a}\on E\bar{b}}(F^\alpha_{K,\bar{s}\bar{t}},\alpha) = F^{\alpha+1}_{N,\bar{b}}(F^\alpha_{K,\bar{s}\bar{t}},\alpha) 
  \end{split}
  \end{eqn}where the first equivalence comes directly from the definition of
 the interpretation of formulae and and the second equivalence comes from 
 the first conclusion for both $M$, $\bar{a}$ and $\alpha$ and $N$, $\bar{b}$ and $\alpha$.
  \end{proof}

Proposition (\ref{char_box_subscript_in_terms_of_fns_tuples}) allows us to add another equivalent to the list in Proposition (\ref{portmanteau_prop}). We state
this in two ways.

\begin{corollary}\label{phis_and_functions_equiv}
    Let $M$, $N\in \pshol$, $\bar{a}\in {^{\lh(\bar{a})}|M|}$, $\bar{b}\in {^{\lh(\bar{a})}|N|}$ with $E\bar{b}= E\bar{a}$ and $\alpha\in \On$.
  \begin{enumerate}
    \item\label{item_sentences_eva_equal} $[\phi^\alpha_{M,\bar{a}}(\bar{b})]_N$ = $[\phi^\alpha_{N,\bar{b}}(\bar{b})]_N$ \vskip6pt
    \item\label{item_Fs_equal} $F^\alpha_{M,\bar{a}} = F^\alpha_{N,\bar{b}}$ \vskip6pt
  \end{enumerate}
\end{corollary}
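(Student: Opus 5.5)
The plan is to show (\ref{item_sentences_eva_equal}) is equivalent to (\ref{item_Fs_equal}) by passing through the statement $[\phi^\alpha_{M,\bar{a}}(\bar{b})]_N = E\bar{b}$ and invoking the second conclusion of Proposition (\ref{char_box_subscript_in_terms_of_fns_tuples}).

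The first step is the ``self'' evaluation $[\phi^\alpha_{N,\bar{b}}(\bar{b})]_N = E\bar{b}$. I would obtain it from Proposition (\ref{char_box_subscript_in_terms_of_fns_tuples}) applied with $M:=N$, $\bar{a}:=\bar{b}$ and the second structure also $N$, $\bar{b}$. There $E\bar{b}\le E\bar{b}$ and $\bar{b}\on E\bar{b}=\bar{b}$, so the trivially true equality $F^\alpha_{N,\bar{b}} = F^\alpha_{N,\bar{b}}$ gives $N\forces\phi^\alpha_{N,\bar{b}}(\bar{b})$. As a sanity check, one can also verify this directly by induction on $\alpha$. At stage $0$ each conjunct $\squaresub{[\psi(\bar{b})]_N}\psi(\bar{b})$ evaluates to $E\bar{b}\land([\psi(\bar{b})]_N\leftrightarrow[\psi(\bar{b})]_N)=E\bar{b}$, using the Important Assumption that $E\psi=\top$. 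The limit stage is a meet of copies of $E\bar{b}$. The successor conjuncts have the form $\squaresub{q}\chi(\bar{b})$ with $q=[\chi(\bar{b})]_N$, so they again evaluate to $E\bar{b}$.

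Consequently (\ref{item_sentences_eva_equal}) reads $[\phi^\alpha_{M,\bar{a}}(\bar{b})]_N = E\bar{b}$, i.e.\ $N\forces\phi^\alpha_{M,\bar{a}}(\bar{b})$. Since $E\bar{b}=E\bar{a}$, we have $\bar{a}\on E\bar{b}=\bar{a}$. The second conclusion of Proposition (\ref{char_box_subscript_in_terms_of_fns_tuples}), applied to $M,\bar{a},N,\bar{b}$, then says exactly that this holds iff $F^\alpha_{M,\bar{a}} = F^\alpha_{N,\bar{b}}$, which is (\ref{item_Fs_equal}). Chaining the two equivalences gives the corollary, and adjoining it to Proposition (\ref{portmanteau_prop}) via item (\ref{item_Fs_are_equal}) adds the new equivalent.

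The main obstacle is small. It is making sure that $\bar{a}\on E\bar{b}=\bar{a}$ holds literally as an equality of tuples, not merely up to equivalence; this follows from the presheaf axiom $a\on Ea=a$ together with $E\bar{b}=E\bar{a}\le Ea_i$. It also requires checking that the self-evaluation uses the Important Assumption that all constants have extent $\top$, so that $E\phi^\alpha_{N,\bar{b}}$ does not cut the value below $E\bar{b}$.
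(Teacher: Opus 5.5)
Your route is the one the paper intends. The self-evaluation $[\phi^\alpha_{N,\bar b}(\bar b)]_N = E\bar b$ (Corollary (\ref{corol_self_app_forces})) turns item (1) into $N\forces\phi^\alpha_{M,\bar a}(\bar b)$. The second conclusion of Proposition (\ref{char_box_subscript_in_terms_of_fns_tuples}) then turns that into an equality of $F^\alpha$'s. Your direct induction for the self-evaluation is correct, and it is the more robust way to get that step.

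The step you flag as the main obstacle, however, rests on a false claim. For a tuple, $E\bar a=\bigwedge_i Ea_i$, and this can be strictly below some $Ea_i$. In that case $E(a_i\on E\bar a)=E\bar a\neq Ea_i$, so $a_i\on E\bar a\neq a_i$. The axiom $a\on Ea=a$ gives $a\on q=a$ only when $Ea\le q$, and $E\bar a\le Ea_i$ is the wrong direction. For example, take $Ea_0=\top$ and $a_1=a_0\on p$ with $p<\top$; then $\bar a\on E\bar a=\langle a_0\on p,a_1\rangle\neq\bar a$.

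The same slip appears in your first step ($\bar b\on E\bar b=\bar b$), where your direct induction makes it harmless. In the second step it matters, because Proposition (\ref{char_box_subscript_in_terms_of_fns_tuples}) only yields $F^\alpha_{M,\bar a\on E\bar a}=F^\alpha_{N,\bar b}$. What you need is $F^\alpha_{M,\bar a\on E\bar a}=F^\alpha_{M,\bar a}$, an identity of invariants rather than of tuples.

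That identity does hold. Apply the unlabelled lemma in \S{}\ref{function_analysis} that follows Lemma (\ref{fn_range_bounded_tuples}), with $p=E\bar a$. It gives $\dom(F^\alpha_{M,\bar a\on E\bar a})\subseteq\dom(F^\alpha_{M,\bar a})$, with the two functions agreeing there. The two domains are in fact equal, because they depend on the tuple only through its length and its extent, and $E(\bar a\on E\bar a)=E\bar a$. Alternatively, show by induction on $\alpha$ that $F^\alpha_{M,\bar d}$ depends only on $\bar d\on E\bar d$.

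With that replacement your argument is complete. The paper itself makes the identification of $\bar a\on E\bar a$ with $\bar a$ silently.
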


\begin{corollary}\label{3.2.4_ish_tuples} (\emph{cf.} \cite{B00}, Theorem (3.2.4).)
  Let $M\in \pshol$, $\bar{a}\in {^{\lh(\bar{a})}|M|}$ and $\alpha\in \On$.
Then for any $N\in \pshol$ and $\bar{b}\in {^{\lh(\bar{a})}|N|}$, with $E\bar{b}\le E\bar{a}$,
$N\forces \phi^\alpha_{M,\bar{a}}(\bar{b})$ if and only if there is some $h:\bar{a}\longrightarrow \bar{b}$ with $h\in Q_\alpha(E\bar{b})$.
\end{corollary}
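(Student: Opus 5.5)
The statement is obtained by chaining two equivalences already proved. Given $M$, $\bar{a}$, $\alpha$, and $N$, $\bar{b}$ with $E\bar{b}\le E\bar{a}$, put $p=E\bar{b}$. Since $E\bar{b}\le E\bar{a}$, we have $p = E\bar{a}\land E\bar{b}$ and $\bar{b}\on p=\bar{b}$.

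First, the second conclusion of Proposition (\ref{char_box_subscript_in_terms_of_fns_tuples}) gives
\[ N\forces \phi^\alpha_{M,\bar{a}}(\bar{b}) \iff [\phi^\alpha_{M,\bar{a}}(\bar{b})]_N = E\bar{b} \iff F^\alpha_{M,\bar{a}\on E\bar{b}} = F^\alpha_{N,\bar{b}}. \]
Here I use that forcing is defined as the value equalling the extent. Under the Important Assumption every formula has extent $\top$, so $E\phi^\alpha_{M,\bar{a}}\land E\bar{b}=E\bar{b}$.

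Second, I apply Proposition (\ref{equiv_fn_equal_and_Q_for_Fs_tuples}) to the pair $(M,\bar{a})$, $(N,\bar{b})$. With $p=E\bar{a}\land E\bar{b}=E\bar{b}$, it says that $F^\alpha_{M,\bar{a}\on p}=F^\alpha_{N,\bar{b}\on p}$ holds if and only if there is $h:\bar{a}\longrightarrow\bar{b}$ with $h\in Q_\alpha(p)$. Since $\bar{b}\on p=\bar{b}$, the left side is exactly the equality from the first step, and $Q_\alpha(p)=Q_\alpha(E\bar{b})$. Composing the two equivalences gives the corollary.

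The only step needing care is bookkeeping, not substance: the map "$h:\bar{a}\longrightarrow\bar{b}$" in Proposition (\ref{equiv_fn_equal_and_Q_for_Fs_tuples}) should be read as $a_i\on p\mapsto b_i$, i.e.\ as a map from $\bar{a}\on E\bar{b}$ to $\bar{b}$. This is also how the corollary is meant, as the opening paragraph of \S\ref{the_phi_alpha_Ma} indicates. I would check this alignment of conventions explicitly, but I expect no genuine obstacle.
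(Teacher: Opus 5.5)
Your proposal is correct and is essentially the paper's own argument. The paper gives no separate proof of this corollary: it presents it as an immediate consequence of the second conclusion of Proposition (\ref{char_box_subscript_in_terms_of_fns_tuples}) chained with the $F^\alpha$/$Q_\alpha$ equivalence of Proposition (\ref{equiv_fn_equal_and_Q_for_Fs_tuples}), exactly as you do with $p=E\bar{b}$. Your reading of $h$ as sending $\bar{a}\on E\bar{b}$ to $\bar{b}$ is the intended one.
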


We make use of the following observation in the corollary below it.

\begin{corollary}\label{corol_self_app_forces}  Let $M\in \pshol$, $\bar{a}\in {^{\lh(\bar{a})}|M|}$ and $\alpha\in \On$. Then
 \[  [\phi^\alpha_{M,\bar{a}}(\bar{a})]_M = E\bar{a} \sss\sss (\sss \equiv M \forces \phi^\alpha_{M,\bar{a}}(\bar{a}) \sss ) . \]
\end{corollary}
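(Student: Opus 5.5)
The plan is to obtain Corollary (\ref{corol_self_app_forces}) directly from the second conclusion of Proposition (\ref{char_box_subscript_in_terms_of_fns_tuples}), taking $N=M$ and $\bar{b}=\bar{a}$. The hypothesis $E\bar{b}\le E\bar{a}$ is then trivially satisfied, with $E\bar{b}=E\bar{a}$. The proposition therefore gives
\[ F^\alpha_{M,\bar{a}\on E\bar{a}} = F^\alpha_{M,\bar{a}} \hbox{ if and only if } [\phi^\alpha_{M,\bar{a}}(\bar{a})]_M = E\bar{a}. \]
So it remains only to check that the left hand side holds.

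Since $E(\bar{a}\on E\bar{a}) = E\bar{a}$ and restriction of a section to its own extent is the identity, we have $\bar{a}\on E\bar{a}=\bar{a}$. Hence $F^\alpha_{M,\bar{a}\on E\bar{a}}$ and $F^\alpha_{M,\bar{a}}$ are literally the same function. This yields $[\phi^\alpha_{M,\bar{a}}(\bar{a})]_M = E\bar{a}$.

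The parenthetical equivalence with $M\forces\phi^\alpha_{M,\bar{a}}(\bar{a})$ follows from the definition of forcing. The notion $\forces$ is defined by $[\phi(\bar{a})]_M = E\phi\land E\bar{a}$. Under the standing assumption that all constants have extent $\top$, every $L^{\squ}$-formula has extent $\top$, so $E\phi\land E\bar{a}=E\bar{a}$.

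As a sanity check, or an alternative route if one prefers not to lean on the proposition, I would also verify the claim directly by induction on $\alpha$.
\begin{itemize}
\item For $\alpha=0$, each conjunct $\squaresub{[\psi(\bar{a})]_M}\psi(\bar{a})$ evaluates to $E\bar{a}\land([\psi(\bar{a})]_M\leftrightarrow[\psi(\bar{a})]_M)=E\bar{a}$, and the conjunction of these is $E\bar{a}$.
\item For limit $\alpha$, the conjunction of the $\phi^\beta_{M,\bar{a}}(\bar{a})$, each of value $E\bar{a}$, again has value $E\bar{a}$.
\item For $\alpha+1$, each new conjunct has the form $\squaresub{q}\chi(\bar{a})$ with $q=[\chi(\bar{a})]_M$. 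It therefore has value $E\bar{a}\land(q\leftrightarrow q)=E\bar{a}$.
\end{itemize}

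I expect no real obstacle here. The only delicate point is the bookkeeping on extents, namely ensuring that $E\phi=\top$, which is exactly where the Important Assumption is used.
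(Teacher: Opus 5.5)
Your main argument is the paper's proof: specialise the second conclusion of Proposition (\ref{char_box_subscript_in_terms_of_fns_tuples}) to $(N,\bar b)=(M,\bar a)$. However, your justification that the left-hand side holds is wrong as stated.

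For a tuple, $E\bar a=\Wedge_i Ea_i$ and restriction acts coordinatewise. So $\bar a\on E\bar a=\bar a$ holds only when all coordinates have the same extent. If, say, $Ea_0\not\le Ea_1$, then $E(a_0\on E\bar a)=Ea_0\land E\bar a<Ea_0$, so $a_0\on E\bar a\neq a_0$.

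What you actually need is that $F^\alpha_{M,\bar a}$ is unchanged when $\bar a$ is replaced by $\bar a\on E\bar a$. Its domain depends on $\bar a$ only through $\lh(\bar a)$ and $E\bar a=E(\bar a\on E\bar a)$. Its values are unchanged by an easy induction on $\alpha$, whose base case is the bound of Lemma (\ref{fn_range_bounded_tuples}). This is what the paper uses silently here, and explicitly in the proof of Lemma (\ref{self_application_tuples}), where $J_{\bar c\on E\bar c}=J_{\bar c}$ is attributed to Lemma (\ref{fn_range_bounded_tuples}).

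Your ``sanity check'' is a genuinely different route, and it is correct and self-contained as you wrote it. Every conjunct of $\phi^\alpha_{M,\bar a}$ is one of two kinds:
\begin{itemize}
\item an earlier $\phi^\beta_{M,\bar a}$, handled by the induction hypothesis;
\item a formula $\squaresub{q}\chi(\bar v)$ whose subscript $q=[\chi(\bar a)]_M$ is computed at the very tuple you then evaluate at.
\end{itemize}
For the second kind the value is $E\bar a\land E\chi\land(q\leftrightarrow q)=E\bar a$, and the clause for $\Wedge$ (with $E\phi=\top$) then returns $E\bar a$. Here $E\bar a$ is the extent of the whole tuple, so the restriction issue above never arises.

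This route uses only the interpretation clauses for $\squaresub{p}$ and $\Wedge$ and the Important Assumption. The paper's route needs the $F^\alpha$, the restriction bookkeeping above, and the double induction behind Proposition (\ref{char_box_subscript_in_terms_of_fns_tuples}). What the paper's route buys is conceptual: it shows the corollary as the diagonal instance of the correspondence between the $\phi^\alpha$ and the $F^\alpha$. As a proof of this particular statement, I would make your direct induction the actual argument.
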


\begin{proof} Immediate on setting $(N,\bar{b})=(M,\bar{a})$ in the second conclusion of Proposition (\ref{char_box_subscript_in_terms_of_fns_tuples}).
\end{proof}

\begin{corollary} Let $\psi\in L^{\squ}$ be a sentence with $r(\psi)=\alpha$. Let $M\in \pshol$. Then
  $[\psi]_M = \top$ if and only if \[ [\Vee \setof{\phi^\alpha_{N,\emptyset}}{N\in \pshol \sss\sss\&\sss\sss [\psi]_N=\top}]_M =\top .\]
\end{corollary}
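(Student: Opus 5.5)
The forward direction is immediate from Corollary (\ref{corol_self_app_forces}). The reverse direction is a covering argument: the value of a disjunction is the join of the values of its disjuncts, and on each disjunct we will transfer the value of $\psi$ using the refined partial isomorphisms of Corollary (\ref{3.2.4_ish_tuples}) together with Proposition (\ref{invariance_of_fml_level_by_level}). Throughout, the Important Assumption gives $E\psi = E\phi^\alpha_{N,\emptyset} = \top$, and the empty tuple has extent $\top$.

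($\Longrightarrow$) Suppose $[\psi]_M=\top$. Then $M$ itself is one of the $N$ indexing the disjunction. By Corollary (\ref{corol_self_app_forces}) with $\bar{a}=\emptyset$ we have $[\phi^\alpha_{M,\emptyset}]_M = E\emptyset = \top$. The value of a disjunction dominates the value of each disjunct, so the whole disjunction has value $\top$ in $M$.

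($\Longleftarrow$) Suppose $\Vee_N [\phi^\alpha_{N,\emptyset}]_M = \top$, the join taken over those $N$ with $[\psi]_N=\top$. Write $q_N = [\phi^\alpha_{N,\emptyset}]_M$. It suffices to show $q_N \le [\psi]_M$ for each such $N$, since then $[\psi]_M \ge \Vee_N q_N = \top$. Fix $N$. The plan is to obtain from the equation $[\phi^\alpha_{N,\emptyset}]_M = q_N$ a partial isomorphism $h\in Q_\alpha(q_N)$ between $N$ and $M$. Proposition (\ref{invariance_of_fml_level_by_level}), applied to the unnesting $\psi^\exists$ (which has $d(\psi^\exists)=r(\psi)=\alpha$ by Proposition (\ref{rank=degree_of_ext_squ}) and the same value as $\psi$), then gives $[\psi]_N\land q_N = [\psi]_M \land q_N$. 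Since $[\psi]_N=\top$, this yields $q_N\le[\psi]_M$.

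The main obstacle is the localisation. Corollary (\ref{3.2.4_ish_tuples}) is stated for tuples, and it gives $h\in Q_\alpha(E\bar{b})$ only when $N\forces\phi^\alpha_{M,\bar{a}}(\bar{b})$, that is, when the formula holds on the full extent of the parameters. For empty tuples that extent is $\top$, whereas here we only have the value $q_N$. I would first try an anchoring tuple. If $\mathcal C\neq\emptyset$, pick $c\in\mathcal C$; by the Important Assumption $E{\mathfrak c}^M=E{\mathfrak c}^N=\top$. Use the one-element tuples ${\mathfrak c}^N$ in $N$ and ${\mathfrak c}^M\on q_N$ in $M$. The formula $\phi^\alpha_{N,\emptyset}$, viewed as having a dummy free variable, has value $q_N = E({\mathfrak c}^M\on q_N)$ at ${\mathfrak c}^M\on q_N$, because its value does not depend on the dummy argument and restriction meets with the extent, as in Lemma (\ref{lemma_can_focus_on_forced_formulae}). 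I would check, by the induction in Proposition (\ref{char_box_subscript_in_terms_of_fns_tuples}), that $\phi^\alpha_{N,\emptyset}$ with a dummy variable agrees with $\phi^\alpha_{N,{\mathfrak c}^N}$ up to the forced conjuncts involving $v=c$, which hold at ${\mathfrak c}^M\on q_N$. Corollary (\ref{3.2.4_ish_tuples}) then gives $h\in Q_\alpha(q_N)$ with $h({\mathfrak c}^N\on q_N)={\mathfrak c}^M\on q_N$.

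If $\mathcal C=\emptyset$, or if the anchoring does not go through cleanly, the fallback is to rerun the double induction of Proposition (\ref{char_box_subscript_in_terms_of_fns_tuples}) relativised below an arbitrary $q\in\Omega$. That means replacing $E\bar{b}$ everywhere by $E\bar{b}\land q$ and showing $[\phi^\alpha_{N,\bar{a}}(\bar{b})]_M\ge E\bar{b}\land q$ iff $F^\alpha_{N,\bar{a}\on q}=F^\alpha_{M,\bar{b}\on q}$. The successor step, which uses Corollary (\ref{coroll_forced_fmlae_sets}) and the characterisation of $\forces\squaresub{p}$, appears to go through verbatim with this extra meet.
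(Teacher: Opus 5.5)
Your overall structure matches the paper's. The forward direction goes via Corollary~(\ref{corol_self_app_forces}). For the converse you bound each $q_N=[\phi^\alpha_{N,\emptyset}]_M$ by $[\psi]_M$, using a refined partial isomorphism below $q_N$ from Corollary~(\ref{3.2.4_ish_tuples}) and then a transfer of $\psi$. You also correctly identify the crux: localising at the empty tuple, whose extent is $\top$ rather than $q_N$. However, neither of your two ways of localising works as stated.

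\textbf{The anchoring route.} The anchoring claim is false. $\phi^\alpha_{N,{\mathfrak c}^N}$ is not $\phi^\alpha_{N,\emptyset}$ plus conjuncts about $v=c$. Because constants have modified rank $1$, it records facts about ${\mathfrak c}$ that $\phi^\alpha_{N,\emptyset}$ does not record at the same level. Already for $\alpha=0$ and a unary relation symbol $R$, $\phi^0_{N,{\mathfrak c}^N}$ contains the conjunct $\squaresub{[R({\mathfrak c}^N)]_N}R(v)$, while $\phi^0_{N,\emptyset}$ says nothing about $R$. So, classically, $[\phi^0_{N,\emptyset}]_M=\top$ is compatible with $N\thinks R({\mathfrak c}^N)$ and $M\thinks\neg R({\mathfrak c}^M)$. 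In that case ${\mathfrak c}^N\mapsto{\mathfrak c}^M$ is not even a partial isomorphism, and Corollary~(\ref{3.2.4_ish_tuples}) cannot produce your anchored $h$.

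This is not only a base-case glitch. With single-element back and forth the discrepancy recurs at successor levels. Take $N$ with universe $\set{{\mathfrak c}^N,x}$ and $S^N=\set{({\mathfrak c}^N,x)}$, and $M$ with universe $\set{{\mathfrak c}^M,y}$ and $S^M=\set{(y,{\mathfrak c}^M)}$. Then the empty map is in $Q_1(\top)$, but ${\mathfrak c}^N\mapsto{\mathfrak c}^M$ is not. Nor is an anchor needed: once $q_N$ is made the top element, as below, the empty map in $Q_\alpha(q_N)$ already transfers sentences of modified rank $\alpha$.

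\textbf{The fallback.} As formulated, the fallback does not localise. Since $\emptyset\on q=\emptyset$, for $\bar a=\bar b=\emptyset$ your right-hand side $F^\alpha_{N,\emptyset}=F^\alpha_{M,\emptyset}$ does not depend on $q$. The proposed equivalence is therefore false: at $q=\bot$ the left-hand side always holds. The same problem affects your appeal to Proposition~(\ref{invariance_of_fml_level_by_level}). For a sentence the only parameter tuple is $\emptyset$, whose extent is $\top$, not $q_N$. The proof of that proposition uses $E\bar a\le p$ at exactly this point, so it does not yield $[\psi]_N\land q_N=[\psi]_M\land q_N$.

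\textbf{What is missing.} You need to make $q_N$ the top element. The paper does this by passing to $N\on p_N$ and $M\on p_N$, where its $p_N$ is your $q_N$. These are in effect structures over ${\downarrow}p_N$, in which the empty tuple has extent $p_N$. There:
\begin{itemize}
\item $[\phi^\alpha_{N,\emptyset}]_{N\on p_N}=p_N=[\phi^\alpha_{N,\emptyset}]_{M\on p_N}$;
\item Corollary~(\ref{3.2.4_ish_tuples}) gives $h\in Q_\alpha(p_N)$;
\item Corollary~(\ref{generalization_Karp_thm}) gives $N\on p_N\equiv^{\squ}_\alpha M\on p_N$, hence $[\psi]_{M\on p_N}=p_N$;
\item finally $\Vee_N[\psi]_{M\on p_N}\le[\psi]_M$.
\end{itemize}
Equivalently, your fallback does work if you carry $q$ as an explicit parameter through the empty-tuple cases of Propositions~(\ref{char_box_subscript_in_terms_of_fns_tuples}), (\ref{equiv_fn_equal_and_Q_for_Fs_tuples}) and (\ref{invariance_of_fml_level_by_level}). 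Read ``forced below $q$'' as ``value $\ge q$'', and compare the values of $F^\alpha_{N,\emptyset}$ and $F^\alpha_{M,\emptyset}$ only after meeting with $q$.
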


\begin{proof} ``$\Longrightarrow$.'' By Corollary (\ref{corol_self_app_forces}),  $[\phi^\alpha_{M,\emptyset}]_M =\top$. So, if $[\psi]_M = \top$,
  then
  \begin{eqn}\begin{split}
      \top = [\phi^\alpha_{M,\emptyset}]_M \le  & \sss
      \Vee \setof{[\phi^\alpha_{N,\emptyset}]_M}{N\in \pshol \sss\sss\&\sss\sss [\psi]_N=\top} \\
      =   &     [\Vee \setof{\phi^\alpha_{N,\emptyset}}{N\in \pshol \sss\sss\&\sss\sss [\psi]_N}=\top]_M \le \top,
    \end{split}
  \end{eqn}so $[\Vee \setof{\phi^\alpha_{N,\emptyset}}{N\in \pshol \sss\sss\&\sss\sss [\psi]_N}=\top]_M = \top$.

  ``$\Longleftarrow$." Now suppose $N\in \pshol$ with $[\psi]_N=\top$ and set $p_N =  [\phi^{\alpha}_{N,\emptyset}]_M$.
 
  As $[\phi^\alpha_{N,\emptyset}]_N = \top$, we have $[\phi^\alpha_{N,\emptyset}]_{N\on p_N} = p_N$. On the other hand,
   \[ [\phi^{\alpha}_{N,\emptyset}]_{M\on p_N} = [\phi^{\alpha}_{N,\emptyset}]_M \land p_N = p_N \land p_N = p_N .\]

  By Corollary (\ref{3.2.4_ish_tuples}) there is thus some $h:N\on p_N \longrightarrow M\on p_N$ such that  $h\in Q_\alpha(p_N)$.
  Hence, by Corollary (\ref{generalization_Karp_thm}) we have $N\on p_N \equiv^{\squ}_\alpha M\on p_N$.
  So, since $[\psi]_{N\on p_N} = p_N$ we have $[\psi]_{M\on p_N} = p_N$.

  Thus  if \begin{eqn}\begin{split}
    \top =  & \sss[\Vee \setof{\phi^\alpha_{N,\emptyset}}{N\in \pshol \sss\sss\&\sss\sss [\psi]_N}=\top]_M \\
      = &     \Vee \setof{[\phi^\alpha_{N,\emptyset}]_M}{N\in \pshol \sss\sss\&\sss\sss [\psi]_N=\top}.
    \end{split}
  \end{eqn}we have\begin{eqn}\begin{split} \top  
                                     = & \Vee \setof{p_N}{N\in \pshol \sss\sss\&\sss\sss [\psi]_N=\top} \\
       = & \Vee \setof{[\psi]_{M\on p_N}}{N\in \pshol \sss\sss\&\sss\sss [\psi]_N=\top} \le [\psi]_M \le \top,\\
    \end{split}
  \end{eqn}so $[\psi]_M=\top$.
  \end{proof}

Proposition (\ref{char_box_subscript_in_terms_of_fns_tuples}) also allows us to see that the sentences $\phi^\alpha$ can be defined in terms of the functions $F^\alpha$.

\begin{proposition} Let $M\in\pshol$, $\bar{a}\in {^{\lh(\bar{a})}|M|}$ and $\alpha\in \On$. Then
  $\phi^0_{M\bar{a}} = \bigwedge_{\psi\in \UA} \squaresub{F^0_{M,\bar{a}}} \psi(\bar{v})$, and
  \[ \phi^{\alpha+1}_{M,\bar{a}} = \phi^\alpha_{M,\bar{a}} \wedge\bigwedge_{K,\bar{s}\bar{t}}
  \squaresub{ F^{\alpha+1}_{M,\bar{a}} (K,\bar{s}\bar{t},\alpha) } \exists u\phi^\alpha_{K,\bar{s}\bar{t}}(\bar{v},\bar{u}) .\]
\end{proposition}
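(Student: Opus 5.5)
The plan is to compare the recursive definition of the $\phi^\alpha_{M,\bar{a}}$ with the claimed expressions clause by clause. The two sides use the same conjunctions, existential quantifiers and inner formulae $\phi^\alpha_{K,\bar{s}\bar{t}}$. So it suffices to show that the elements of $\Omega$ subscripting the connectives $\squaresub{\cdot}$ coincide. No induction on $\alpha$ beyond the one already built into the definition is needed, since the inner formulae are literally the same on both sides.

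For $\alpha=0$: each conjunct of $\phi^0_{M,\bar{a}}$ is $\squaresub{[\psi(\bar{a})]_M}\psi(\bar{v})$ with $\psi\in\UA$. I read $\squaresub{F^0_{M,\bar{a}}}\psi(\bar{v})$ in the statement as $\squaresub{F^0_{M,\bar{a}}(\psi,f)}\psi(\bar{v})$ for the appropriate index map $f$ picking out the variables of $\psi$. Then $F^0_{M,\bar{a}}=I_{M,\bar{a}}$ gives $I_{M,\bar{a}}(\psi,f)=[\psi(a_{f(0)},\dots,a_{f(n-1)})]_M$ by definition, which is exactly the subscript appearing in $\phi^0_{M,\bar{a}}$. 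Modulo this bookkeeping of variable indexing the two conjunctions are identical. The limit case is immediate, since both sides are $\bigwedge_{\beta<\alpha}\phi^\beta_{M,\bar{a}}$.

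For the successor step, fix $K$, $\bar{s}\in{^{\lh(\bar{a})}|K|}$ and $\bar{t}\in{^{<\mu}|K|}$ (with $E\bar{t}\le E\bar{a}\land E\bar{s}$, so that the triple lies in the relevant domain). By the first conclusion of Proposition (\ref{char_box_subscript_in_terms_of_fns_tuples}),
\[ [\exists\bar{u}\,\phi^\alpha_{K,\bar{s}\bar{t}}(\bar{a},\bar{u})]_M = F^{\alpha+1}_{M,\bar{a}}(F^\alpha_{K,\bar{s}\bar{t}},\alpha). \]
By Lemma (\ref{reln_Fs_and_Hs_tuples}) the right-hand side equals $H^{\alpha+1}_{M,\bar{a}}(K,\bar{s}\bar{t},\alpha)$. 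This is what the notation $F^{\alpha+1}_{M,\bar{a}}(K,\bar{s}\bar{t},\alpha)$ in the statement should be read as, identifying the argument $(K,\bar{s}\bar{t})$ with $F^\alpha_{K,\bar{s}\bar{t}}$. Substituting this value into each conjunct of the definition of $\phi^{\alpha+1}_{M,\bar{a}}$ yields the displayed formula.

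The main obstacle is not mathematical but notational. I must make precise the mild abuse by which $F^{\alpha+1}_{M,\bar{a}}$ is applied to the triple $(K,\bar{s}\bar{t},\alpha)$ rather than to $(F^\alpha_{K,\bar{s}\bar{t}},\alpha)$. I must also check that the range of the conjunction in the definition of $\phi^{\alpha+1}$ matches the domain of $F^{\alpha+1}_{M,\bar{a}}(\cdot,\alpha)$, in particular the constraint $E\bar{t}\le E\bar{a}\land E\bar{s}$. If the definition of $\phi^{\alpha+1}$ is meant to range over all $\bar{s}\bar{t}$, I would restrict to those satisfying this constraint, or else extend the identity using the restriction lemma for $J_{\bar{a}\on p}$.
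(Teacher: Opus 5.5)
Your proposal is correct and matches the paper's own two-line proof. The case $\alpha=0$ is read off from $F^0_{M,\bar{a}}=I_{M,\bar{a}}$, and the successor case substitutes the first conclusion of Proposition (\ref{char_box_subscript_in_terms_of_fns_tuples}) into each conjunct of the definition of $\phi^{\alpha+1}_{M,\bar{a}}$. Your extra appeal to Lemma (\ref{reln_Fs_and_Hs_tuples}) usefully makes explicit how the triple argument $(K,\bar{s}\bar{t},\alpha)$ is to be read. The same goes for your remark about the range of the conjunction versus the domain constraint $E\bar{t}\le E\bar{a}\land E\bar{s}$; the paper leaves both points implicit.
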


\begin{proof} For $\alpha=0$ this is immediate from the definitions.
  For $\alpha+1$ it is immediate from the definitions and Proposition (\ref{char_box_subscript_in_terms_of_fns_tuples}).
\end{proof}

\vfill\eject

\end{document}